\documentclass[11pt]{amsart}
\usepackage{fullpage}
\textheight=9.2truein
\textwidth=6.6truein
\usepackage{graphicx}
\usepackage{amsfonts}
\usepackage{amsthm}
\usepackage{amsmath}
\usepackage{amssymb}
\usepackage{hyperref}
\usepackage[arrow,matrix,curve,color]{xy}
\usepackage{pb-diagram,pb-xy}
\usepackage{hyperref}   
\usepackage{enumitem}
\usepackage[normalem]{ulem}
\usepackage{color}
\usepackage[normalem]{ulem}
\usepackage{tikz}
\usepackage{xspace}
\definecolor{light-blue}{rgb}{0.8,0.85,1}
\definecolor{light-red}{rgb}{1,.4,.4}
\definecolor{purp}{rgb}{.7,.3,1}
\definecolor{yel}{rgb}{1,1,.5}
\definecolor{cy}{rgb}{0,1,1}
\definecolor{m}{rgb}{1,0.1,1}

\usepackage{colortbl}
\usepackage{multirow}
\usepackage{array}

\theoremstyle{plain}
\newtheorem{theorem}{Theorem}[section]
\newtheorem{corollary}[theorem]{Corollary}
\newtheorem{lemma}[theorem]{Lemma}
\newtheorem{proposition}[theorem]{Proposition}

\theoremstyle{definition}
\newtheorem{remark}[theorem]{Remark}
\newtheorem{definition}[theorem]{Definition}
\newtheorem{example}[theorem]{Example}


%
%
%
%

\newcommand{\p}{\partial}
\newcommand{\fb}{\mbox{-}\mathrm{fb}}
\newcommand{\fr}{\mathsf{fr}}
\newcommand{\spin}{\mathsf{spin}}

\newcommand{\cyl}{\mathrm{cy}\ell}

\newcommand{\psc}{positive scalar curvature}
\newcommand{\co}{\colon\,}

\newcommand{\bC}{\mathbb C}

\newcommand{\bF}{\mathbb F}
\newcommand{\bH}{\mathbb H}

\newcommand{\bZ}{\mathbb Z}

\newcommand{\bP}{\mathbb P}
\newcommand{\bN}{\mathbb N}
\newcommand{\bQ}{\mathbb Q}

\newcommand{\cL}{\mathcal L}

\newcommand{\fp}{\mathfrak p}
\newcommand{\fg}{\mathfrak g}
\newcommand{\fk}{\mathfrak k}
\newcommand{\fN}{\mathfrak N}

\newcommand{\SU}{\mathop{\rm SU}}

\newcommand{\Sp}{\mathop{\rm Sp}}
\newcommand{\Spin}{\mathop{\rm Spin}}

\newcommand{\lp}{\textup{(}}
\newcommand{\rp}{\textup{)}}

\newcommand{\Diff}{\operatorname{Diff}}
\newcommand{\BDiff}{\operatorname{BDiff}}

\def\Di{\mathfrak{D}\kern-6.5pt/}
\def\Spi{\mathfrak{S}\kern-6.5pt/}

\setcounter{secnumdepth}{5}
\linespread{1.3}
\parskip=1mm

\title[Psc on simply connected pseudomanifolds]{Positive scalar
  curvature on simply connected\\ spin pseudomanifolds}
\author{Boris Botvinnik} \address{Department of
  Mathematics\\ University of Oregon\\ Eugene OR 97403-1222, USA}
\email[Boris Botvinnik]{botvinn@uoregon.edu}
\urladdr{http://pages.uoregon.edu/botvinn/} \author{Paolo Piazza}
\address{Dipartimento di Matematica\\ Universit\`a di Roma La
  Sapienza\\ Piazzale Aldo Moro\\ 00185 Roma, Italy} \email[Paolo
  Piazza]{piazza@mat.uniroma1.it}
\urladdr{http://www1.mat.uniroma1.it/people/piazza/} \author{Jonathan
  Rosenberg} \address{Department of Mathematics\\ University of
  Maryland\\ College Park, MD 20742-4015, USA} \email[Jonathan
  Rosenberg]{jmr@math.umd.edu}
\urladdr{http://www2.math.umd.edu/\raisebox{-3pt}{~}jmr/}

\begin{document}
\begin{abstract}
Let $M_\Sigma$ be an $n$-dimensional Thom-Mather stratified space of
depth $1$.  We denote by $\beta M$ the singular locus and by $L$ the
associated link.  In this paper we study the problem of when such a
space can be endowed with a wedge metric of positive scalar
curvature. We relate this problem to recent work on index theory on
stratified spaces, giving first an obstruction to the existence of
such a metric in terms of a wedge $\alpha$-class $\alpha_w
(M_\Sigma)\in KO_n$. In order to establish a sufficient condition we
need to assume additional structure: we assume that the link of
$M_\Sigma$ is a homogeneous space of positive scalar curvature,
$L=G/K$, where the semisimple compact Lie group $G$ acts transitively
on $L$ by isometries. Examples of such manifolds include compact
semisimple Lie groups and Riemannian symmetric spaces of compact
type. Under these assumptions, when $M_\Sigma$ and $\beta M$ are spin,
we reinterpret our obstruction in terms of two $\alpha$-classes
associated to the resolution of $M_\Sigma$, $M$, and to the singular
locus $\beta M$. Finally, when $M_\Sigma$, $\beta M$, $L$, and $G$ are
simply connected and $\dim M$ is big enough, and when some other
conditions on $L$ (satisfied in a large number of cases) hold, we
establish the main result of this article, showing that the vanishing
of these two $\alpha$-classes is also sufficient for the existence of
a well-adapted wedge metric of positive scalar curvature.
\end{abstract}
\keywords{positive scalar curvature, pseudomanifold, singularity, bordism, transfer, $K$-theory, index}
\subjclass[2010]{Primary 53C21; Secondary 58J22, 53C27, 19L41, 55N22}

\maketitle

\vspace*{-5mm}

\section{Introduction}
\label{sec:intro}

This paper continues a program begun in \cite{MR1857524} and in
\cite{BR}, to understand obstructions to positive scalar curvature
(which we will sometime abbreviate as psc) on manifolds with fibered
singularities, for metrics that are well adapted to the singularity
structure.

In the cases studied in this paper, the stratified spaces or singular
manifolds $M_\Sigma$ that we study are Thom-Mather pseudomanifolds of
depth one.  For the existence theorem we shall take the two strata to
be spin and simply connected; more general situations, involving
non-trivial fundamental groups, will be dealt with in a forthcoming
article \cite{BB-PP-JR}.  Topologically, $M_\Sigma$ is
homeomorphic to a quotient space of a compact smooth manifold $M$ with
fibered boundary $\partial M$.  Then $M$ is called the
\emph{resolution} of $M_\Sigma$, and the quotient map $M\to M_\Sigma$
is the identity on the interior $\mathring M$ of $M$, and on $\partial
M$, collapses the fibers of a fiber bundle $\varphi\co\partial M\to
\beta M$, with fibers all diffeomorphic to a fixed manifold $L$,
called the \emph{link} of the singularity, and with base $\beta M$
sometimes called the Bockstein (by analogy with other cases in
topology).  We briefly refer to these spaces as \emph{manifolds with
  $L$-fibered singularities}.

Note that the structure group of the bundle $\varphi$ can be an
arbitrary subgroup of $\Diff(L)$, and for part of our results we do
consider this general situation.  However, in studying the existence
problem for wedge metrics of positive scalar curvature we shall need
to have more structure in order to relate the topology of the bundle,
in particular its bordism theory, with its differential geometric
features.

To this end we assume that the fiber bundle $\varphi\co\partial M\to
\beta M$ comes from a principal $G$-bundle $p\co P\to \beta M$, for
some connected compact Lie group $G$ that acts transitively on $L$ by
isometries for some fixed metric $g_L$, and thus $\partial M =
P\times_G L$.  The transitivity of the action of $G$ on $L$ means that
$L=G/K$ is a homogeneous space and has constant scalar curvature. We
refer to these special singular spaces as {\em manifolds with
  $(L,G)$-fibered singularities}.

Since the space $M_\Sigma$ is not a smooth manifold in general (it
will be if and only if $L$ is a standard sphere), we need to explain
what kind of metric we want to use. We shall employ \emph{wedge
  metrics}, also called \emph{iterated conic metrics}, on the regular
part of $M_\Sigma$. These are defined as follows. First, we identify
$M_\Sigma$ with a union $M_\Sigma = M\cup_{\p M} - N(\beta M)$, where
$M$ is a manifold with fibered boundary and $N(\beta M)$ is a tubular
neighborhood of the singular locus $\beta M$.  Then on $M$ we consider
a metric $g_M$ which is a product metric $dt^2 + g_{\partial M}$ in a
small collar neighborhood $\partial M\times [0, \varepsilon)$ of the
  boundary $\partial M$; we assume that $g_{\partial M}$ is a
  submersion metric for the bundle $\varphi\co\partial M\to \beta M$,
  with horizontal metric $\phi^* g_{\beta M}$ and vertical metric
  $g_{\partial M/\beta M}$. On the regular part of $N(\beta M)$ we
  consider a metric of the following type:
\begin{equation*}
g_{N (\beta M)} = dr^2 + r^2 g_{\partial M/\beta M} + \varphi^*
g_{\beta M} + O(r)\,.
\end{equation*}
We call such metrics on the regular part of
$M_\Sigma$ \emph{adapted wedge metrics}. Notice that we can consider
adapted wedge metrics even if the link is $S^n$; in that case we are
considering special metrics on a smooth ambient manifold $M_\Sigma$
with respect to a specified submanifold $\beta M$.

In the case in which $L$ is a homogeneous space, as above, there is a
natural submersion metric on $\partial M\xrightarrow{\varphi} \beta M$
which is defined as follows. A connection $\nabla^p$ on the principal
bundle $p\co P\to \beta M$ gives a connection $\nabla^\varphi$ on the
associated fiber bundle $\varphi\co \partial M\to \beta M$. Then,
since the structure group $G$ of the bundle $\varphi\co \partial M\to
\beta M$ acts by isometries of the metric $g_L$, the connection
$\nabla^\varphi$ provides an orthogonal splitting of the metric $g_{\p
  M}$ into the \emph{horizontal} metric lifted from $g_{\beta M}$ and
the \emph{vertical metric} $g_L$.  For the details, see Section
\ref{sec:setup}.  In other words, when the link $L$ is a homogeneous
space, the wedge metric near the singularity is determined by a metric
$g_{\beta M}$ on the singular locus $\beta M$ and the standard metric
$g_L$ on $L$, together with the connection.  To make the problem of
existence of an adapted metric of {\psc} maximally interesting, we add
one more condition, that $L$ have constant positive scalar curvature $\kappa_L$
precisely equal to the scalar curvature $\kappa_\ell$ of the standard
round $\ell$-sphere, where $\ell=\dim L$.
This insures that the cones over
$L$ are actually scalar-flat; see Section \ref{sec:setup}. An adapted
wedge metric $g$ on $M_{\Sigma}$ (with no $O(r)$ error term)
which satisfies those additional
conditions is called a \emph{well-adapted wedge metric}.

We note here that since the metric $g_L$ has positive
scalar curvature, it follows that the metric $g_{\p M}$ on the
boundary $\p M$ could be assumed to have positive scalar curvature as
well. In the case when all manifolds are spin, this implies that the
corresponding Dirac operator on $M$ gives an
invariant $\alpha_{{\rm cyl}} (M)\in KO_n$, defined by
attaching a cylindrical end $\p M \times [0,\infty)$ to $M$
and giving it the product metric  $g_{\p M} + dt^2$. Then we take
the $KO_n$-valued index of the $C\ell_n$-linear Dirac operator
\cite[\S II.7]{lawson89:_spin} on the resulting
noncompact manifold $M_\infty:=M\cup_{\p M}(\p M \times [0,\infty))$;
the operator is Fredholm since the scalar curvature
is bounded away from $0$ except on a compact set.

In addition, the Dirac operator on the
manifold $\beta M$ also determines a corresponding class $\alpha(\beta
M)\in KO_{n-1-\ell}$. For later use, we use the notation
$\kappa_\ell=\ell(\ell-1)$ for the scalar curvature of the unit
sphere $S^{\ell}(1)$. Now we can state our main results on manifolds with
$(L,G)$-singularities.
\begin{theorem}[Obstruction Theorem]
\label{thm:psobstruction} 
Let $L=G/K$ be a homogeneous space, $\dim L = \ell$, where $G$ is a
connected compact semisimple Lie group, and $g_L$ be a $G$-invariant
Riemannian metric on $L$ of constant scalar curvature equal to
$\kappa_\ell=\ell(\ell-1)$.  Let $M_\Sigma = M\cup_{\p M} - N(\beta
M)$, where $M$ and $\beta M$ are compact spin manifolds, and the
boundary $\p M = P\times_G L$ for some principal $G$-bundle $p\co P\to
\beta M$ with classifying map $\beta M\to BG$.
Assume that $M_\Sigma$ admits a well-adapted metric of {\psc}. Then
the $\alpha$-invariants $\alpha_{{\rm cyl}} (M)\in KO_n$ and
$\alpha(\beta M) \in KO_{n-\ell-1}$ both vanish.
\end{theorem}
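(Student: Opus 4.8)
The plan is to deduce the two vanishing statements from two separate applications of the Lichnerowicz formula — one on the resolution $M$ equipped with a cylindrical end, and one on the singular locus $\beta M$ itself.

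\emph{Vanishing of $\alpha_{{\rm cyl}}(M)$.} Since the well-adapted wedge metric $g$ has {\psc} on the regular part of $M_\Sigma$, its restriction $g_M$ to the resolution $M$ is a metric of {\psc}, and by the definition of an adapted wedge metric $g_M$ agrees near $\p M$ with the product $dt^2+g_{\p M}$, where $g_{\p M}$ is the fixed submersion metric for $\varphi\co\p M\to\beta M$ with vertical part $g_L$. Attaching the cylindrical end $\p M\times[0,\infty)$ with the product metric $g_{\p M}+dt^2$ then gives a complete metric on $M_\infty$ whose scalar curvature is bounded below by a positive constant, so by Lichnerowicz the $C\ell_n$-linear Dirac operator on $M_\infty$ is invertible and its $\KO_n$-index vanishes. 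Since that index is precisely $\alpha_{{\rm cyl}}(M)$ (independently of the metric chosen on the compact piece $M$), we get $\alpha_{{\rm cyl}}(M)=0$.

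\emph{Vanishing of $\alpha(\beta M)$.} Here I would argue that $\beta M$ itself carries a metric of {\psc}. In the neighbourhood $N(\beta M)$ the well-adapted hypothesis forces $g$ to equal \emph{exactly} $dr^2+r^2 g_{\p M/\beta M}+\varphi^*g_{\beta M}$, with no $O(r)$ term. On the regular part $r>0$ write this as $dr^2+h_r$ with $h_r=r^2 g_{\p M/\beta M}+\varphi^*g_{\beta M}$, a family of submersion metrics on $\p M$ over $(\beta M,g_{\beta M})$ whose fibres $(L,r^2 g_L)$ are totally geodesic (because, as in Section~\ref{sec:setup}, the metric comes from the principal connection on $P$ and $G$ acts on $L$ by isometries). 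Combining O'Neill's formula for $\operatorname{scal}(h_r)$ with the standard formula for the scalar curvature of $dr^2+h_r$, the mean-curvature terms of the collapsing fibres contribute $-\ell(\ell-1)/r^2$, which by the hypothesis $\kappa_L=\kappa_\ell=\ell(\ell-1)$ exactly cancels the $+\kappa_L/r^2$ coming from the fibre scalar curvature, leaving the clean identity
\[
\operatorname{scal}(g)(r,p)=\operatorname{scal}(g_{\beta M})(\varphi(p))-r^{2}\,|A|^{2}(p),
\]
where $|A|^2\ge 0$ is a fixed positive multiple of the squared norm of the curvature of the connection $\nabla^p$, measured with $g_{\beta M}$ on the base and $g_L$ on the fibres. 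Since $g$ has {\psc}, reading off this identity at any $(r,p)$ with $r>0$ gives $\operatorname{scal}(g_{\beta M})(\varphi(p))=\operatorname{scal}(g)(r,p)+r^2|A|^2(p)>0$; as $\varphi$ is surjective, $g_{\beta M}$ has {\psc}. As $\beta M$ is spin, Lichnerowicz now gives $\alpha(\beta M)=0$.

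\emph{Main difficulty.} There is no deep obstacle; the one step requiring care is the curvature identity near $\beta M$. One must check, using the structure of the metric coming from the principal $G$-bundle and the $G$-invariance of $g_L$, that the fibres of $h_r$ are totally geodesic (so that O'Neill applies in its simplest form, with no $|T|^2$ term), and then verify that \emph{all} the $r^{-2}$ terms cancel — which is exactly why one requires $g_L$ to have scalar curvature $\kappa_\ell$, and why the hypothesis ``well-adapted'' (rather than merely ``adapted'') is essential: an $O(r)$ perturbation would contribute an uncontrolled term linear in $r$ to $\operatorname{scal}(g)$ near the singular stratum, yielding only $\operatorname{scal}(g_{\beta M})\ge 0$, which would not suffice to conclude $\alpha(\beta M)=0$.
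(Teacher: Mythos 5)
Your proof is correct and follows essentially the same route as the paper's: the vanishing of $\alpha(\beta M)$ is exactly the paper's Theorem \ref{thm:bundlepsc}(3) (O'Neill's formula with totally geodesic fibres, the vanishing of $A_x v$, and the scalar-flat cone normalization $\kappa_L=\kappa_\ell$) combined with Lichnerowicz, while for $\alpha_{{\rm cyl}}(M)$ you use direct invertibility of the Dirac operator on $M_\infty$, which is precisely the ``classic Gromov--Lawson'' alternative the paper offers in place of its gluing-formula argument $\alpha_{{\rm cyl}}(M,g_M)=\alpha_w(M_\Sigma,g)=0$ via Proposition \ref{prop:gluing}. The one point you only note parenthetically --- that $\alpha_{{\rm cyl}}(M)$ is independent of the choice of well-adapted metric, so that computing it with the restriction $g_M$ of the given metric suffices --- is the content of Proposition \ref{prop:bordism-map-alpha}.
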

Some of the background leading to the Obstruction Theorem will be
discussed in Section \ref{sect:analysis}. There,
building on work of Albin-Gell-Redman,
we shall in fact introduce a wedge alpha-class
$\alpha_w (M_\Sigma, g)\in KO_n$
which is defined under much weaker conditions and
that yields the most general Obstruction Theorem, Theorem
\ref{theo:geometric-witt-part1}, for simply connected manifolds with
$L$-fibered singularities. The special case treated in Theorem
\ref{thm:psobstruction}, namely manifolds with $(L,G)$-fibered
singularities, is treated later, in Section \ref{sec:obstructions}. We
have singled out this version of the Obstruction Theorem now because
it is this theorem for which we will prove a converse, namely an
existence result under the assumption that the two obstructions
vanish.

To state the existence result, we need one more definition. Let $X$ be
a closed spin manifold endowed with a $G$-action. Let $g_X$ be a
psc-metric on $X$. We say that a $(X,g_X)$ is a \emph{spin
  psc-$G$-boundary} if there exists a spin $G$-manifold $Z$ bounding
$X$ as a spin $G$-manifold and a psc-metric $g_Z$ on $Z$ which is a
product metric near the boundary with $g_{Z}|_X=g_{X}$.
\begin{theorem}[Existence Theorem]
\label{thm:pscsufficiency} 
Let $L=G/K$ be a homogeneous space, $\dim L = \ell$,
where $G$ is a connected compact semisimple Lie group, and $g_L$ be a
$G$-invariant Riemannian metric on $L$ of constant scalar curvature
equal to $\kappa_\ell=\ell(\ell-1)$.  Let $M_\Sigma = M\cup_{\p M}
- N(\beta M)$, where $M$ and $\beta M$ are compact spin manifolds, and
the boundary $\p M = P\times_G L$ for some principal $G$-bundle 
$p\co P\to \beta M$ with classifying map $\beta M\to BG$.
Assume $M$, $\beta M$, $L$, and $G$ are simply
connected and $n\ge \ell+6$.

Then $M_{\Sigma}$ admits a well-adapted psc-metric if and only if the
$\alpha$-invariants $\alpha_{{\rm cyl}} (M)\in KO_n$ and $\alpha(\beta
M) \in KO_{n-\ell-1}$ both vanish, provided one of the following
conditions holds:
\begin{enumerate}
\item[{\rm (i)}] the manifold $(L,g_L)$ is a spin
  psc-$G$-boundary; \textbf{or}
\item[{\rm (ii)}] the bordism class $[\beta M\to BG]$
  vanishes in $\Omega_{n-\ell - 1}^\spin(BG)$. 
\end{enumerate}
\end{theorem}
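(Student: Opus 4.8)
The forward implication --- that a well-adapted psc metric forces $\alpha_{{\rm cyl}}(M)=0$ and $\alpha(\beta M)=0$ --- is the Obstruction Theorem, Theorem~\ref{thm:psobstruction}, and uses neither (i) nor (ii); so the content is the converse, which I would prove one stratum at a time. On the singular stratum I would invoke the scalar-flat cone mechanism of Section~\ref{sec:setup} --- exactly where the normalization $\kappa_L=\kappa_\ell$ is used: since $\beta M$ is simply connected spin with $\dim\beta M=n-\ell-1\ge5$, the hypothesis $\alpha(\beta M)=0$ yields, by Stolz's theorem, a psc metric $g_{\beta M}$ on $\beta M$, and after rescaling $g_{\beta M}$ by a large factor the fibrewise cone bundle $N(\beta M)$ carries a well-adapted wedge metric of positive scalar curvature which is a product near $\p M$ with boundary value the submersion metric $g_{\p M}$ (vertical part $g_L$, horizontal part $\varphi^{*}g_{\beta M}$); this $g_{\p M}$ is itself psc. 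On the regular stratum I would use the \emph{relative} form of Stolz's theorem: for $M$ and $\p M$ simply connected with $n\ge6$, and for $g_{\p M}$ a psc metric on $\p M$ that bounds a psc metric on some compact spin manifold, $M$ carries a psc metric which is a product near $\p M$ with boundary value $g_{\p M}$ if and only if $\alpha_{{\rm cyl}}(M)=0$ --- the relative index theorem identifying this $KO_n$-class as the obstruction, and relative Gromov--Lawson surgery (in the stratified form developed in Section~\ref{sect:analysis}, surgeries on $\beta M$ being carried through $\varphi\co\p M\to\beta M$ and into $M$) showing that, once such a psc filling of $g_{\p M}$ is at hand, there is no further obstruction. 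The hypothesis $n\ge\ell+6$ is precisely what lets both steps run: it gives $n-\ell-1\ge5$ and $n\ge6$ and leaves room for the codimension-$\ge3$ surgeries, while $BG$ is $3$-connected (since $G$ is connected and simply connected), so classifying maps extend over traces.

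Assembling, suppose $\alpha_{{\rm cyl}}(M)=0$ and $\alpha(\beta M)=0$. The first step equips $N(\beta M)$ with a well-adapted psc metric that is a product near $\p M$ with boundary value $g_{\p M}$. To run the second step I must verify its standing hypothesis, namely that $g_{\p M}$ bounds a psc metric on a compact spin manifold; this is exactly what the alternatives (i) and (ii) are for.

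Under (i), take a spin $G$-manifold $Z$ with $\p Z=L$ carrying a psc metric that is a product near $\p Z$ and restricts to $g_L$ there; then a principal connection on $P$, the O'Neill submersion formulas, and the same large rescaling of $g_{\beta M}$ equip the associated bundle $P\times_G Z$ with a psc metric that is a product near its boundary $\p M=P\times_G L$ with boundary value $g_{\p M}$. Under (ii), choose a compact spin $W$ with $\p W=\beta M$, a principal $G$-bundle $Q\to W$ restricting to $P$, and a classifying map $W\to BG$ extending that of $\beta M$; giving $W$ a metric that is a product near $\p W$ with boundary value $g_{\beta M}$ and rescaling it large, the total space of the $L$-bundle $Q\times_G L$ over $W$ carries a psc metric that is a product near its boundary $\p M=P\times_G L$ with boundary value $g_{\p M}$. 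In either case $g_{\p M}$ bounds a compact spin psc manifold, so the relative form of Stolz's theorem applies and, using $\alpha_{{\rm cyl}}(M)=0$, produces a psc metric on $M$ that is a product near $\p M$ with boundary value $g_{\p M}$. Gluing this along $\p M$ to the well-adapted psc metric already built on $N(\beta M)$ yields a well-adapted psc metric on $M_\Sigma$, as required.

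The hard part will be the relative form of Stolz's theorem in the wedge category --- above all the stratified Gromov--Lawson surgery underneath it, where one must perform surgeries on $\beta M$, propagate them through $\varphi$ and into $M$, and keep both the submersion structure on $\p M$ and the exactly scalar-flat conic normal form near $\beta M$ uniformly controlled, together with the matching relative index theorem. A second, structural point --- and the reason the auxiliary hypothesis cannot just be dropped --- is that $\alpha_{{\rm cyl}}(M)=0$ alone does not force the submersion metric $g_{\p M}$ to be psc-nullbordant; alternatives (i) and (ii) are precisely two natural ways to guarantee that it is, by filling in the link or by filling in the Bockstein respectively.
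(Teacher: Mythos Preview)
Your strategy is correct and yields the theorem, but it is organized differently from the paper's proof, and your description of the hard step is off.

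The paper works entirely in the $(L,G)$-singular bordism category. It constructs an explicit $(L,G)$-pseudomanifold $M'_\Sigma$ that is $(L,G)$-bordant to $M_\Sigma$ and already carries a well-adapted psc metric --- in case~(i) this is $M'_\Sigma = N(\beta M)\cup (P\times_G \bar L)$, in case~(ii) it is the $L$-bundle over a spin filling $W$ of $\beta M\to BG$ --- then observes that the difference $[M_\Sigma]-[M'_\Sigma]$ lies in the image of $\Omega^\spin_n$ by the bordism exact sequence~\eqref{eq:5}, is represented (after surgery) by a simply connected closed spin $M''$ with $\alpha(M'')=0$, hence psc by Stolz; and finally invokes the paper's own $(L,G)$-Bordism Theorem~\ref{thm:4-1} (proved via the stratified Surgery Theorem~\ref{thm:surg}) to transport psc from $M'_\Sigma\sqcup M''$ to $M_\Sigma$.

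Your route instead stays on the resolution: fix psc on $\beta M$ and hence on $N(\beta M)$ with boundary value the submersion metric $g_{\partial M}$; produce a psc spin filling $V$ of $(\partial M,g_{\partial M})$ from (i) or (ii) (your $V$ is exactly the paper's $M'$); then appeal to a \emph{relative} Stolz theorem for the manifold-with-boundary $M$ to extend $g_{\partial M}$ to psc on $M$. That relative statement is correct --- closing up $M\cup(-V)$, applying the closed Stolz theorem, and using Gajer/Walsh relative Gromov--Lawson surgery rel~$\partial M$ gives it --- and once granted, the glue-up is immediate. In effect you replace the paper's $(L,G)$-bordism theorem by the ordinary bordism theorem for manifolds with boundary, which is a legitimate and arguably more elementary trade.

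Two points to correct. First, the surgery you actually need is \emph{not} stratified: your argument keeps $\partial M$ and $g_{\partial M}$ fixed and surgers only in the interior of a bordism between $M$ and $V\sqcup(\text{closed psc})$; no surgeries on $\beta M$ need to be ``propagated through~$\varphi$''. The stratified surgery of Theorem~\ref{thm:surg} is the paper's mechanism, not yours. Second, your cross-reference is wrong: Section~\ref{sect:analysis} develops the analytic $KO$-classes, not any surgery theory; the surgery results are in Section~\ref{sec:main}. Straightening these out, your argument stands.
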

\begin{remark}
  \leavevmode
\begin{enumerate}
\item The dimensional assumption $n\ge \ell+6$ is necessary
  in order to apply surgery theory to $M$ and to $\beta M$.
\item The assumption (i) holds when $L$ is a sphere,
  an odd complex projective space, or when $L=G$. Other cases
  where this assumption holds are discussed in Remark
  \ref{rem:morecasesGbdy}.
\item The assumption (ii) holds automatically if 
  $\p M=\beta M\times L$ (i.e., the bundle $\varphi$ is trivial) and if
  $L$ is an even quaternionic projective space.
\end{enumerate}  
\end{remark}
Sections \ref{sect:analysis}, \ref{sec:setup}, and \ref{sec:main}
contain important preliminaries needed for the proofs of these
theorems. The proof of Theorem \ref{thm:psobstruction} is completed in
Section \ref{sec:obstructions}, and the proof of Theorem
\ref{thm:pscsufficiency} is in Section \ref{sec:existence}.

Extensions of these results to the case where $M$ and $\p M$ are not
necessarily simply connected will be found in our sequel paper
\cite{BB-PP-JR}.  A quick sketch of the contents of that paper is in
Section \ref{sec:preview}. 

\medskip
\noindent
{\bf Acknowledgments.} This research was partially supported by {U.S.}
NSF grant number DMS-1607162
and by Universit\`a di Roma La Sapienza. {B.B.} and {J.R.}
acknowledge a very pleasant visit to Rome in May--June 2019, during
which time much of this work was done. {B.B.} was also partially supported
by Simons Foundation Collaboration Grant number 708183.  We would also like
the referee for his/her careful reading of the first draft and for suggestions
for improvements.

\section{Dirac operators and associated $KO$ classes}
\label{sect:analysis}
\subsection{Introduction}
In this section we review and revisit necessary constructions and
results concerning Dirac operators on smooth spin pseudomanifolds with
depth-1 singularities. Our goal here is to describe under what
conditions a Dirac operator defines a corresponding $KO$-homology
class.  We shall proceed in some generality first and then specify
further assumptions on the link fibration and get sharper results
correspondingly.

A particular case of pseudomanifolds with depth-1 singularities is
given by manifolds with Baas-Sullivan singularities \cite{MR0346824},
when a type of singularity (a link $L$) is fixed. 
Starting with a smooth manifold $M$ with boundary $\p
M=\beta M \times L$, we obtain a manifold with $L$-singularity
$M_{\Sigma}$ by gluing $M$ to the product $\beta M\times c(L)$ (where
$c(L)$ is a cone over $L$) along $\p M$, i.e., $M_{\Sigma}=M\cup
-\beta M\times c(L)$. It will be convenient to adapt similar notations
for pseudomanifolds with depth-$1$ singularities.

\subsection{Pseudomanifolds of depth 1}
First we recall necessary definitions. For the rest of this section,
we fix a closed smooth compact manifold $L$,
called the \emph{link}.
In terms of the notation of \cite{MR1857524}, the singularity
type of our pseudomanifolds is $\Sigma=(L)$. 
We consider a depth-1 Thom-Mather pseudomanifold
$M_\Sigma$ with dense stratum $M_\Sigma^{{\rm reg}}$, singular stratum
$\beta M$, and associated link $L$.  We shall not go 
into a detailed explanation of the axioms of a Thom-Mather space
--- see, for example, \cite[Section A.1]{Brasselet-et-al}
and \cite[Definition 2.1]{ALMP-novikov} --- but we simply recall
that as a consequence of these axioms,
we have a locally compact metrizable space $M_\Sigma$ such that:
\begin{itemize}
\item the space $M_\Sigma$ is the union of two
  smooth strata, $M^{{\rm reg}}_\Sigma$ and $\beta M$;
\item  the manifold $M^{{\rm reg}}_\Sigma$ is open and dense in $M_\Sigma$;
\item the manifold $\beta M$ is smooth and compact;
\item there is an open neighborhood $N(\beta M)$ of $\beta M$ in
  $M_\Sigma$, equipped with a continuous retraction
  $\mathsf{re}\co N(\beta M)\to   \beta M$
  and a continuous map $\rho\colon N(\beta M) \to   [0,+\infty)$ 
  which is smooth in $N(\beta M)^{{\rm reg}}:=N(\beta M)\cap M^{{\rm reg}}_\Sigma$
  and  such that $\rho^{-1}(0)= \beta M$;
\item the neighborhood $N(\beta M)$ is a (locally trivial) fiber bundle 
  over $\beta M$  (via the above retraction $\mathsf{re}$)
  with fiber $c(L)$, the cone over $L$, and with radial 
  variable along the cones induced by $\rho$.
\end{itemize}
We can associate to $M_\Sigma$ its \emph{resolution}, which in our
case is a manifold $M:=M_\Sigma\setminus \rho^{-1}([0,1))$ with
boundary $\partial M:=\rho^{-1}(1)$, such that $\p M$ is the total
space of a smooth fiber bundle
$L \to\partial M \xrightarrow{\varphi} \beta M$
with fiber $L$. This fibration is sometimes called the
\emph{link bundle} associated to $M_\Sigma$.
Clearly, there is a diffeomorphism between the
interior $\mathring{M}$ of $M$ and $M_\Sigma^{{\rm reg}}$.  Once a
link $\Sigma=(L)$ is fixed, we will call such a pseudomanifold
$M_\Sigma$ a \emph{pseudomanifold with fibered $L$-singularity}.
Sometimes we abuse terminology by calling the resolution $M$ of
$M_\Sigma$ a \emph{manifold with fibered $L$-singularity}, in cases
where the extra structure is understood.

Thus we have $M_{\Sigma}:= M\cup_{\p M} - N(\beta M)$,
where $N(\beta M)$ comes together with a fiber-bundle
\begin{equation*}
  c(L) \to N (\beta M) \xrightarrow{\varphi_c} \beta M .
\end{equation*}
(We have used the symbol $\varphi_c$ here since this
bundle is just the result of replacing each fiber $L$ in
the fibration $\varphi$ by the cone $c(L)$ over $L$.)
Moreover if $v$ is the vertex of $c({L})$, then the inclusion
$\{v\}\hookrightarrow {c(L)}$ induces an embedding:
\begin{equation*}  
\beta M \hookrightarrow N(\beta  M) \subset M_{\Sigma}.
\end{equation*}
\subsection{Wedge metrics}
Let $M_\Sigma$ be a depth-1 pseudomanifold as above.  A Riemannian
metric on $M_\Sigma$ is, by definition, a Riemannian metric $g$ on
$M_\Sigma^{{\rm reg}}$.  We shall consider special types of
metrics. To this end we fix $g_{\p M}$, a Riemannian metric on $\p M$
and $g_{\beta M}$, a Riemannian metric on $\beta M$. We assume that
$\varphi\co \p M \to \beta M$ is a Riemannian submersion; this means
that we have fixed a connection on $\p M$, that is a splitting $T(\p
M)= T(\p M/\beta M) \oplus T_H ( \p M)$ with $T_H (\p M)\simeq
\varphi^* T(\beta M)$ and $g_{\p M}= h_{\p M /\beta M} \oplus^{\perp}
\varphi^*g_{\beta M}$,\footnote{The symbol $\oplus^{\perp}$ means
  orthogonal direct sum, where the pulled-back metric from the
  base $\beta M$ is put on the horizontal subspaces as defined by
  the connection.}
with $h_{\p M /\beta M} $ a metric on the
vertical tangent bundle $T(\p M/\beta M)$ of the fibration
$L\to \p M\xrightarrow{\varphi}\beta M$.
Let $r$ now be the radial variable along
the cones with $r=0$ corresponding to $\beta M$.
\begin{definition}\label{admissible}
We say that $g$ on $ M_\Sigma^{{\rm reg}} $ is a \emph{wedge metric}
if on $N(\beta M)$ it can be written as
\begin{equation}\label{conic-metric}
dr^2+r^2h_{\p M/\beta M}+ \varphi_c^*g_{\beta M} + O(r)\,.
\end{equation}
Equation \eqref{conic-metric} says that
the difference between $g$ and a metric of the form
$dr^2+r^2h_{\p M/\beta M}+ \varphi_c^*g_{\beta M}$ is a smooth
section of the symmetric tensor product of the cotangent bundle
vanishing at $r=0$.
If, in addition, $g$ is of product type near $\p N(\beta M)=\p M$,
then we call $g$ an \emph{adapted} wedge metric.  We refer to the pair
$(M_\Sigma,g)$ as a \emph{wedge space} (of depth one).
\end{definition}
Notice that $g$ is an incomplete Riemannian metric on the open
manifold $M_\Sigma^{{\rm reg}}$.  Using the diffeomorphism between the
interior $\mathring{M}\subset M$ of the resolved manifold and
$M_\Sigma^{\rm reg}$, we can induce a metric on $\mathring{M}$,
denoted again $g$.  We redefine $r$ to be the boundary defining
function for $\p M\subset M$.  In connection with a wedge metric $g$,
we can consider the \emph{wedge tangent bundle} ${}^w TM$ over the resolved
manifold $M$ (also called the \emph{incomplete edge tangent bundle}). See 
for example  \cite[Section 2.1]{albin-gellredman-sigma}.
The wedge tangent bundle  is dual 
to the wedge cotangent bundle ${}^w T^*M$, 
defined through the Serre-Swan theorem \cite{MR143225}
as the vector bundle whose sections are given
by the finitely generated projective $C^\infty (M)$-module
$\{\omega\in C^\infty(M,T^*M)\;\;|\;\; \iota_\partial ^* \omega \in \varphi^* C^\infty (\beta M, T^* (\beta M))\}$,
with $\iota_\partial$ denoting the inclusion of $\partial M$ into $M$.
As explained in detail in \cite[Section 2.1]{albin-gellredman-sigma},
  the wedge cotangent bundle has a local basis given by the 1-forms
$\{dr,\ rd\lambda_1,\dots , rd\lambda_k,\ dy_1,\dots, dy_\ell\}$ with
$\lambda_1,\dots,\lambda_{\ell}$ local coordinates on $L$, where
$\ell=\dim L$, and $y_1,\dots,y_k$ local coordinates on $\beta M$;
notice that the differential forms $\{rd\lambda_1,\dots , rd\lambda_k\}$ vanish 
on the boundary as sections of the cotangent bundle $T^*M$
but they do not vanish as sections of ${}^w T^*M$.
See \cite[Section 2.1]{albin-gellredman-sigma}.
It is easy to show that a wedge metric $g$ extends from the interior
of $M$ to a smooth
metric on ${}^w TM\to M$. We remark  that $M$ can also be given the collared
metric $g|_{M}$; we denote this metric by $g_M$; needless to say, $g$ and $g_M$ 
have different behavior near the boundary.

We also have the \emph{edge tangent bundle}, ${}^e TM\to M$, 
see \cite{MR1133743},
defined  through the Serre-Swann theorem as the vector bundle 
whose sections are given by the finitely generated projective
$C^\infty (M)$-module
\begin{equation}\label{edge-v-f}
  \mathcal{V}_{{\rm e}}=\{V\in C^\infty (M,TM)\;\;
  \text{such that}\;\; V |_{\p M} \;\;
  \text{is tangent to the fibers of} \;\;\p M
  \xrightarrow{\varphi} \beta M\}
\end{equation}  
The vector fields in  $\mathcal{V}_{{\rm e}}$ are called
\emph{edge vector fields}.
The edge tangent bundle ${}^e TM\to M$ is  locally
spanned by the vector fields
\begin{equation}\label{edge-vf}
  r\partial_r\,,\quad  \partial_{\lambda_1},\dots ,
  \partial_{\lambda_{\ell}}\,,\quad r\partial_{y_1},\dots, r\partial_{y_k}.
\end{equation}
where, as before, the vector fields
$\{ \partial_{\lambda_{\ell}}\,,\quad r\partial_{y_1},\dots, r\partial_{y_k}\}$
vanish on the boundary as sections of the tangent bundle $TM$
but they are non-vanishing as sections of ${}^e TM\to M$.
See \cite{Melrose} for more on this general philosophy. 
What is interesting about $\mathcal{V}_{{\rm e}}$ is
that it is a Lie algebra; this means that it generates an algebra of
differential operators known as \emph{the algebra of edge differential
  operators}, denoted ${\rm Diff}^*_e (M)$. For edge differential
operator there is a pseudodifferential calculus developed by Mazzeo
\cite{MR1133743} and this calculus plays a central role in the
analysis of Dirac-type operators associated to wedge metrics.

We say that a wedge metric $g$ is a psc-metric if it has positive
scalar curvature as a Riemannian metric on $ M_\Sigma^{{\rm reg}}$.
In this paper we are interested in understanding necessary and (for
some particular links $L$) sufficient conditions when a pseudomanifold
$M_\Sigma$ with $L$-fibered singularity admits a psc wedge
metric. When this space of psc-metrics is non-empty, we shall
investigate in the sequel paper \cite{BB-PP-JR} its topological
properties, for example the number of its connected components or the
cardinality of its homotopy groups.

Note that stratified pseudomanifolds can also be endowed with
different metrics, for example edge metrics or $\Phi$-metrics (in
contrast with the wedge case, these are \emph{complete} metrics).  For
a rather detailed study of the resulting index theoretic obstructions
to the existence of $\Phi$-metrics with psc, see \cite{PiZen}.

\subsection{Spin-stratified pseudomanifolds and
  Dirac operators: $KO$-homology classes}
Let $(M_\Sigma,g)$ be a wedge space of depth 1,
with singular locus $\beta M$  and link $L$. We assume
that the resolved manifold $M$ admits a spin structure
and we fix such a structure. (It would be equivalent to
fix a spin structure on ${}^w TM$, since this is isomorphic
to $TM$ as bundles.) This fixes a spin structure on $\p M$
too. We assume additionally that 
$\beta M$ is spin and fix a spin structure for $\beta M$.
Notice that this also fixes a spin structure
for the vertical tangent bundle of the boundary fibration, 
denoted $T(\partial M/\beta M)$, which is
endowed with the vertical metric $g_{\p M/\beta M}$.
See \cite[II, Prop. 1.15]{lawson89:_spin}.

\begin{definition}
Let $(M_\Sigma,g)$ be a wedge space of depth 1, singular locus $\beta M$
and link $L$. We shall say that $M_\Sigma$ is \textbf{spin-stratified}
if both $M$ and $\beta M$ are spin.
\end{definition}

We  denote by $\Spi_g (M)\to M$ the bundle given by
$P_{{\rm spin}}\times_{\lambda} C\ell_n$, with $\lambda\co {\rm Spin}_n\to
{\rm Hom} (C\ell_n,C\ell_n)$ the representation given by left
multiplication. See \cite[Chapter II, Section 7]{lawson89:_spin}. 
There is a fiberwise right action of $C\ell_n$ on 
$\Spi_g (M)$ on the right which makes $\Spi_g (M)$ a bundle of
rank 1 $C\ell_n$-modules. Notice that
$\Spi_g (M)$ is graded:  $\Spi_g (M)=\Spi_g^0 (M)\oplus \Spi_g^1 (M)$.
Let $\Di_g$ be the
associated $C\ell_n$-linear Atiyah-Singer operator. The operator
$\Di_g$ is defined in the usual fashion, $\Di_g:= {\rm cl}\circ \nabla$, with
$\nabla$  the Levi-Civita connection on 
 $\Spi_g (M)$; it has the local expression 
$$\Di_g :=\sum_j {\rm cl}(e^j)\nabla_{e_j}$$
with $\{e_j\}$ a local orthonormal frame  of vector fields
and $\{e^j\}$ the dual basis defined by the metric.
See again \cite[Chapter II, Section 7]{lawson89:_spin} for the details.
$\Di_g$ is a $\bZ/2$-graded odd  formally self-adjoint operator of
Dirac type commuting with the right action of $C\ell_n$.
For this operator the Schr\"odinger-Lichnerowicz formula holds:
\begin{equation}\label{lichn}
\Di^2=\nabla^*\nabla+ \frac{1}{4}\kappa_g,
\end{equation}
with $\kappa_g$ denoting the scalar curvature of $g$.
See again \cite[Chapter II, Section 7]{lawson89:_spin} and also
\cite{Rosenberg-III} for more details on this crucial point.

\medskip
\noindent
\textbf{Notation.} Unless absolutely necessary we shall omit the reference
to the wedge metric $g$ in the bundle and the operator, thus denoting the
$C\ell_n$-linear Atiyah-Singer operator  simply by $\Di$.
Moreover, we shall often  use the shorter notation $\Spi$
instead of $\Spi (M)$.

\medskip
We can regard $\Di$ as a wedge differential operator of order 1 on $L^2
(M,\Spi)$---more on this in a moment---initially with domain equal to
$C^\infty_c (M_\Sigma^{{\rm reg}} ,\Spi)\equiv C^\infty_c (\mathring{M},\Spi)
\subset L^2 (M,\Spi)$.  We are looking for self-adjoint $C\ell_n$-linear 
extensions of this differential operator in $L^2 (M,\Spi)$.
Note, crucially, that the analysis given
in \cite{albin-gellredman-sigma} and \cite{a-gr-dirac} is quite
general, since it applies to any Dirac-type operator.

Associated to $\Di$ there is a well
defined boundary operator  $\Di_{\partial}$, 
which is nothing but the Atiyah-Singer operator of the spin manifold
$\partial M$.
We are assuming that $\partial M$ is a fiber bundle of spin manifolds,
$L\to \p M\xrightarrow{\varphi} \beta M$; consequently
\[
\mathfrak{S} (\p M)\simeq
\mathfrak{S}( \p M/\beta M)\hat{\otimes}\varphi^* \mathfrak{S}(\beta M).
\]
The careful study made by Albin and Gell-Redman of the Levi-Civita
connection near the singular stratum, see Section 2.2 and also
Section 3.1 there, implies that
\begin{equation}\label{AS-operator-near-stratum}
\Di= {\rm cl}(dr) \left( \partial_r + \frac{\ell}{2r} + \frac{1}{r}\Di_{\partial M/\beta M}\hat{\otimes} {\rm Id} + 
 {\rm Id}\hat{\otimes}\widetilde{\Di}_{\beta M} \right) + \mathfrak{B}
\end{equation}
with $\ell=\dim L$, 
$\Di_{\p M/\beta M}$ the vertical family of Atiyah-Singer operators on 
the fibration $L\to \p M\xrightarrow{\varphi} \beta M$,
$\widetilde{\Di}_{\beta M}$ an explicit horizontal operator, and
$\mathfrak{B}$ a bundle endomorphism 
which is $O(r)$. Formula \eqref{AS-operator-near-stratum}
exhibits $\Di$ as a wedge differential operator of degree 1.
With a small abuse of notation, widely used in family index theory,
we denote by $\Di_{L}$ the generic operator of this vertical family.
The following result holds:

\begin{theorem}\label{theo:geometric-witt-part1}
Let $M_\Sigma$ be spin-stratified and assume that  
\begin{equation}\label{geometric-witt-part2}
  {\rm spec}_{L^2} (\Di_{L})\cap (-1/2,1/2)=\emptyset\quad
  \text{for each fiber } L.
\end{equation} Then:
\begin{enumerate}
\item[{\rm 1]}] The operator $\Di$ with domain
$C^\infty_c (M_\Sigma^{{\rm reg}},\Spi)\subset L^2 (M,\Spi)$ is
essentially self-adjoint.
\item[{\rm 2]}] 
        Its unique self-adjoint extension, still denoted by $\Di$,
        defines  a $C\ell_n$-linear 
        Fredholm operator and thus a class
        $\alpha_w (M_\Sigma,g)$ in $KO_n$, with $n=\dim M_\Sigma$.
\end{enumerate}        

If  the vertical metric $g_{\p M/\beta M}$ is of psc along the fibers,
then there exists $\epsilon > 0$ such that
\begin{equation*}
  {\rm spec}_{L^2} (\Di_{L})\cap (-\epsilon,\epsilon)
  =\emptyset\quad \text{for each fiber } L.
\end{equation*} 
Thus, by suitably rescaling the wedge metric $g$
along the vertical direction of the boundary fibration, we can achieve
condition  \eqref{geometric-witt-part2}
by assuming only that the vertical metric $g_{\p M/\beta M}$ is of
psc along the fibers.\footnote{This is analogous
to the case of the signature operator
on Witt and Cheeger spaces; see \cite{almp-package,ALMP-cheeger}.}

Finally, if $(M_\Sigma^{{\rm reg}},g)$ has psc everywhere then
$\Di$ is $L^2$-invertible;
 in particular  $\alpha_w (M_\Sigma,g)=0$ in $KO_n$.
\end{theorem}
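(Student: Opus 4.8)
The plan is to deduce this from the Schr\"odinger--Lichnerowicz formula~\eqref{lichn}, after reducing the claim $\alpha_w(M_\Sigma,g)=0$ to the vanishing of the $L^2$-kernel of the self-adjoint extension~$\Di$. First, by part~(2) this extension is a self-adjoint, $C\ell_n$-linear Fredholm operator, and $\alpha_w(M_\Sigma,g)\in KO_n$ is by definition its index; since an invertible operator has vanishing index, it is enough to prove that $\Di$ is invertible with bounded inverse once $\kappa_g>0$ everywhere on $M_\Sigma^{{\rm reg}}$. Being Fredholm, $\Di$ has closed range, and being self-adjoint it has range equal to $(\ker\Di)^{\perp}$; thus if $\ker_{L^2}\Di=0$ then $\Di$ is a bijection onto $L^2$ and, by the closed graph theorem, has bounded inverse. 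So the task is to show $\ker_{L^2}\Di=0$.

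For this I would argue as follows. For $u\in C^\infty_c(M_\Sigma^{{\rm reg}},\Spi)$, formula~\eqref{lichn} together with integration by parts — which produces no boundary terms, since $M_\Sigma^{{\rm reg}}$ is a manifold without boundary and $u$ is compactly supported — gives
\begin{equation*}
  \|\Di u\|_{L^2}^2=\|\nabla u\|_{L^2}^2+\tfrac14\int_{M_\Sigma^{{\rm reg}}}\kappa_g\,|u|^2 .
\end{equation*}
Now take $u$ in the kernel of the self-adjoint extension. By part~(1), $C^\infty_c(M_\Sigma^{{\rm reg}},\Spi)$ is a core, so there exist $u_j\in C^\infty_c(M_\Sigma^{{\rm reg}},\Spi)$ with $u_j\to u$ and $\Di u_j\to0$ in $L^2$. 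The displayed identity then forces $\|\nabla u_j\|_{L^2}^2\le\|\Di u_j\|_{L^2}^2\to0$, and consequently $\tfrac14\int\kappa_g|u_j|^2=\|\Di u_j\|_{L^2}^2-\|\nabla u_j\|_{L^2}^2\to0$. Passing to a subsequence along which $u_j\to u$ almost everywhere and applying Fatou's lemma yields $\int_{M_\Sigma^{{\rm reg}}}\kappa_g\,|u|^2\le\liminf_j\int_{M_\Sigma^{{\rm reg}}}\kappa_g\,|u_j|^2=0$. Since $\kappa_g>0$ at every point of $M_\Sigma^{{\rm reg}}$, this forces $u=0$ almost everywhere, so $\ker_{L^2}\Di=0$ and the reduction above completes the argument.

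The hard part — really the only subtle point — is that $\kappa_g$ need not be bounded near the singular stratum $\beta M$: for a psc wedge metric it generically blows up like $r^{-2}$, because the leading term of the scalar curvature near $\beta M$ is $r^{-2}$ times the amount by which the link cones fail to be scalar-flat. For this reason one cannot pass to the limit directly in the curvature term, and this is exactly why the argument invokes the essential self-adjointness of part~(1) to approximate $u$ in the graph norm and then closes with Fatou, rather than with a uniform positive lower bound on $\kappa_g$ (which need not exist).
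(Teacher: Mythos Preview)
Your proposal addresses only the final clause of the theorem (psc everywhere $\Rightarrow$ $L^2$-invertibility and $\alpha_w=0$); items~1] and~2] and the intermediate statement about psc along the fibers are not treated. In the paper these are handled by direct citation: items~1] and~2] are referred to the microlocal analysis of Albin and Gell-Redman (via the normal operator computed from~\eqref{AS-operator-near-stratum}), and the fiberwise gap follows from the fiberwise Schr\"odinger--Lichnerowicz formula as in Bismut--Cheeger. If your intent was to supply only the last step, you should say so explicitly; as written, the proposal reads as a proof of the whole theorem and is therefore incomplete.

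For the final clause your argument is correct and follows the same route as the paper, namely the Schr\"odinger--Lichnerowicz formula~\eqref{lichn}. The paper gives this step in one line; your version is more detailed and, in particular, is careful about the fact that $\kappa_g$ need not be uniformly bounded below on $M_\Sigma^{\rm reg}$: you use essential self-adjointness (item~1]) to approximate a putative $L^2$-kernel element in graph norm by compactly supported sections, and then close with Fatou rather than with a uniform lower bound on $\kappa_g$. This is a genuine refinement of what the paper writes, and it is worth keeping.
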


\begin{proof}
Items 1] and 2] follow directly from the microlocal analysis methods 
employed by  Albin and Gell-Redman, 
and more specifically from the explicit form of the normal operator
associated to $\Di$
that one obtains from \eqref{AS-operator-near-stratum}.
See  \cite{albin-gellredman-sigma} and \cite{a-gr-dirac}.
The result relative to the psc assumption along the fibers follows 
as in Bismut-Cheeger \cite[Section 4]{BC-adiabatic}, using 
the Schr\"odinger-Lichnerowicz formula along the fibers. The last
statement follows also 
from the Schr\"odinger-Lichnerowicz formula.
\end{proof}

\begin{definition}\label{def:psc-Witt}
We shall say that the spin-stratified  pseudomanifold
$(M_\Sigma,g)$ is \textbf{psc-Witt} if the metric 
$g$ is of psc along the links, i.e. if the vertical metric
$g_{\p M/\beta M}$ induces on each fiber $L$ a metric of psc.
\end{definition}

\begin{remark}
In this work we have limited
ourselves to the $\alpha$-class in $KO_n$. 
$\alpha$-classes in $KO$ of suitable real $C^*$-algebras will be
treated in the sequel \cite{BB-PP-JR} of this article. 
\end{remark}

\subsection{Dependence on a metric}
\label{remark:dependence}
Contrary to the case of closed manifolds, the wedge
$\alpha$-class $\alpha_w(M_\Sigma,g)$ of a psc-Witt pseudomanifold
can depend on the choice of the
adapted wedge metric $g$ near the singular locus $\beta M$.
On the other hand, if $g(t)$ is a smooth 1-parameter family of adapted
wedge metrics that are psc-Witt for each $t$, then $\alpha_w (M_\Sigma,g_t)$
in $KO_n$ does not depend on $t$. Indeed,
since our assumption ensures that condition
\eqref{geometric-witt-part2} is satisfied for all
$t$ (up to scaling), it is possible to construct a continuous
family of parametrices.
The result then follows from
\cite[Ch. III,  Theorem 10.8]{lawson89:_spin}.

\subsection{Cylindrical KO-theory classes and a gluing formula}
We decompose 
\[
M_\Sigma= M\cup_{\p M} (-N(\beta M))\quad\text{and}\quad
M_\Sigma^{{\rm reg}}= M\cup_{\p M} (-N(\beta M)^{{\rm reg}}).
\]
Let $g$ be an adapted wedge metric on $M_\Sigma$. Denote by $g_M$ the
Riemannian metric induced by restriction of $g$ to $M$;
similarly, let $g_{N(\beta M)}$ be  the
metric induced by restriction of $g$ to $N(\beta M)$,
the collar neighborhood of the singular stratum. By assumption
$$ g_{N(\beta M)}=dr^2 + r^2 g_{\p M/\beta M} + \varphi_c^* g_{\beta M}+ O(r)\,.$$ 
Recall that an adapted wedge metric $g$ is such that 
$g_M$  and $g_{N(\beta M)}$  are of product type  in a collar
neighborhood of  $\p M$. We make the hypothesis  that 
not only does $g_{\p M/\beta M}$ have psc along the fibers but that
the whole metric $g_{\p M}$ is of psc. 
As explained below, a sufficient condition for this additional
property to hold  is that the fibers are totally geodesic in $\p M$.
Now attach an infinite cylinder to $M$ along the boundary $\p M$ and
extend the metric to be constant on the cylinder, obtaining $(M_\infty,g_\infty)$,
the Riemannian manifold with cylindrical ends associated to
$(M,g_M)$; similarly, 
attach an infinite cylinder to the boundary of $N(\beta M)$
and extend the metric. 
Since the metrics near the boundary are of product type and since the metric on
the boundary is of psc, we are in the situation where the spin Dirac
operator on the manifolds with cylindrical ends  is invertible
at infinity. By Gromov-Lawson \cite[p.\ 117]{MR720933}
(the $b$-calculus of Melrose 
\cite{Melrose}  can alternatively be used here),
we then have a cylindrical $\alpha$-class 
$\alpha_{{\rm  cyl}} (M,g_M)$
in $KO_n$ \footnote{We are making 
a small abuse of notation in that we write $\alpha_{{\rm  cyl}} (M,g_M)$ and not 
$\alpha_{{\rm cyl}} (M_\infty,g_\infty)$.}. This class only depends 
on $M$ and on the metric $g_{\partial M}$ induced by $g_M$ on the
boundary. See Remark \ref{remark-stolz' R groups} for more
on this cylindrical class. Combining Gromov-Lawson and the analysis of
Albin-Gell-Redman, we also have a mixed class
$\alpha_{{\rm cyl},w} (N(\beta M),g_{N(\beta M)})$, also an
element in $KO_n$. This gives us the first part of the following Proposition:
\begin{proposition}\label{prop:gluing}
Under the above additional assumption, namely that $g|_{\p M}$ is of
psc, we have $KO$-classes
\[
\alpha_{{\rm  cyl}} (M,g_M), \quad \alpha_{{\rm cyl},w} (N(\beta
M),g_{N(\beta M)})\quad\text{in}\quad  KO_n
\]
and the following gluing formula holds:
\begin{equation}\label{gluing}
\alpha_{{\rm cyl}} (M,g_M) +  \alpha_{{\rm cyl},w} (N(\beta
M),g_{N(\beta M)})= \alpha_{w} (M_\Sigma,g) \quad\text{in}\quad KO_n. 
\end{equation}
\end{proposition}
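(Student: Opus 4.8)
The plan is to derive the gluing formula \eqref{gluing} from a relative index principle in the spirit of Gromov--Lawson \cite{MR720933}: the $C\ell_n$-linear index is additive when one cuts $M_\Sigma$ along a separating hypersurface carrying an invertible Dirac operator --- here the hypersurface $\p M$ with its psc metric $g_{\p M}$. I would realize this concretely by a neck-stretching argument, using the homotopy invariance of $\S$\ref{remark:dependence}.

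First I would dispatch well-definedness of the two classes. Since $g$ is adapted and $g_{\p M}$ has psc, on $M_\infty=M\cup_{\p M}(\p M\times[0,\infty))$ the operator $\Di$ equals ${\rm cl}(du)(\p_u+\Di_{\p M})$ at infinity with $\Di_{\p M}$ invertible by \eqref{lichn} applied on $\p M$; this is exactly the Gromov--Lawson situation, equivalently handled via the $b$-calculus \cite{Melrose}, so $\Di$ on $M_\infty$ is $C\ell_n$-Fredholm and defines $\alpha_{{\rm cyl}}(M,g_M)\in KO_n$. The space $N(\beta M)$ carries a wedge singularity at $r=0$ and a product cylindrical end along $\p M$: at $r=0$ the operator has the normal form \eqref{AS-operator-near-stratum}, and since $g_{\p M/\beta M}$ is psc along the fibers by hypothesis, the gap condition \eqref{geometric-witt-part2} holds after the rescaling of Theorem \ref{theo:geometric-witt-part1}, while at the end the model operator is again $\p_u+\Di_{\p M}$ with $\Di_{\p M}$ invertible. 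Combining the microlocal analysis of Albin--Gell-Redman \cite{albin-gellredman-sigma,a-gr-dirac} at $r=0$ with the $b$-calculus at the end shows $\Di$ is $C\ell_n$-Fredholm on $N(\beta M)_\infty$ and yields $\alpha_{{\rm cyl},w}(N(\beta M),g_{N(\beta M)})\in KO_n$.

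For the formula, write the product collar of $\p M$ in $M_\Sigma$ as $\p M\times(-\varepsilon,\varepsilon)$ with $\p M\times\{0\}$ separating the $M$-side from the $N(\beta M)$-side, and for $T>0$ let $(M_\Sigma(T),g_T)$ be the result of inserting a product cylinder $\p M\times[-T,T]$. The insertion affects neither the wedge structure at $r=0$ nor the spectrum of the fiberwise operators $\Di_L$, so $\alpha_w(M_\Sigma(T),g_T)$ is defined, and since --- after rescaling the collar coordinate --- the $g_T$ form a smooth path of adapted psc-Witt wedge metrics on a fixed model, $\S$\ref{remark:dependence} gives $\alpha_w(M_\Sigma(T),g_T)=\alpha_w(M_\Sigma,g)$ for all $T$. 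I would then decompose $M_\Sigma(T)=M_T\cup N_T$ along $\p M\times\{0\}$ and assemble a $C\ell_n$-linear parametrix for $\Di_{g_T}$ from (i) a parametrix for $\Di$ on $M_\infty$, (ii) one for $\Di$ on $N(\beta M)_\infty$, and (iii) the inverse of the invertible cylinder operator ${\rm cl}(du)(\p_u+\Di_{\p M})$ on $\p M\times\bR$, patched with cutoff functions. Invertibility of $\Di_{\p M}$ gives exponential decay of the cylinder Green's operator in $u$, so for $T$ large the error terms are $C\ell_n$-compact and the index of $\Di_{g_T}$ equals $\alpha_{{\rm cyl}}(M,g_M)+\alpha_{{\rm cyl},w}(N(\beta M),g_{N(\beta M)})$; combined with the previous identity this proves \eqref{gluing}.

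The hard part will be making the parametrix patching uniform in $T$: one must control the commutator error terms as $T\to\infty$, which rests on the exponential decay of the resolvent of $\p_u+\Di_{\p M}$ (from invertibility of $\Di_{\p M}$, i.e.\ psc on $\p M$) on the cylindrical piece and on the analogous decay supplied by the gap condition \eqref{geometric-witt-part2} on the wedge piece, plus enough bookkeeping to keep every step $C\ell_n$-linear, so that the final identity genuinely lives in $KO_n$ via the $C\ell_n$-Fredholm theory of \cite[Ch.~III, \S10]{lawson89:_spin}. Once this splitting principle is in place, the rest is routine.
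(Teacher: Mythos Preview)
Your proposal is correct and follows essentially the same approach as the paper: the paper's proof consists solely of the sentence ``This follows from a small variation of a well known technique of Bunke, see \cite{Bunke},'' and what you have written is precisely a sketch of Bunke's neck-stretching/parametrix-patching gluing argument, adapted to accommodate the wedge end via the Albin--Gell-Redman analysis. You have simply unpacked what the paper leaves as a citation.
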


\begin{proof}
Only the gluing formula needs to be discussed. This follows from  a
small variation of a well known technique of Bunke, see \cite{Bunke}.
\end{proof}

\section{The case when the link is a homogeneous space}
\label{sec:setup}
\subsection{Geometric setup: motivation}
Here we specify a geometric setup relevant to the existence problem of
an adapted wedge psc-metric on a pseudomanifold $M_{\Sigma}$ with
fibered $L$-singularity.
Even in our case when the pseudomanifold
$M_{\Sigma}$ is spin, we have seen that an adapted wedge metric has to be
rather special even for an appropriate self-adjoint Dirac operator to
exist. This means that pure topological conditions on $M_{\Sigma}$ do
not give an appropriate setup for existence of a psc-metric.

We will bypass this issue by fixing an
appropriate geometrical structure on the pseudomanifolds we would like to
study. This will lead to the notion of
a \emph{pseudomanifold with fibered $(L,G)$-singularity}, where
$G$ is a Lie group acting transitively on a manifold $L$.

\subsection{Metric near the singular stratum}
Now we discuss some details of the geometry of the tubular
neighborhood $N=N(\beta M)$ of the singular stratum $\beta M$.  These
will be needed for explaining how we arrived at our definition of a
well-adapted metric, and will also be needed for the proof of Theorem
\ref{thm:psobstruction}.  Since the interior of $M$ will be irrelevant
here, we work simply with a bundle $\varphi_c\co
N\to B$ over an arbitrary base manifold $B=\beta M$, where the fibers
of $\varphi_c$ are cones on a fixed manifold $L$.
Since we want the geometry of $N$ to be well related to the geometry
of $L$ and of $B$, we assume that the bundle
$\varphi_c\co N\to B$ is the
associated bundle coming from a principal $G$-bundle $p\co P\to B$,
where $G$ is a compact Lie group acting transitively on $L$ by
isometries.  That means in particular that $L$ can be identified with
a homogeneous space $G/K$.  The case where $G$ is a torus has very
different behavior than the case where $G$ is semisimple, so we
restrict attention to the latter case in this paper.  (The case where
$G=S^1$ was studied in detail in \cite{BR}.)

Thus in this paper we take $G$ to be a compact connected semisimple
Lie group. In Section \ref{sec:main} we will
take $G$ to be simply connected; this amounts to a kind of ``spin''
condition on the $G$-bundle $p$ over $B$ (since it is
saying that the structure group of the link bundle
lifts to the universal cover), but we won't need this yet.
We fix a bi-invariant metric on $G$, or equivalently,
an $\text{Ad}$-invariant metric on the Lie algebra $\fg$
of $G$ (when $G$ is simple, this is necessarily a multiple of the Killing
form); in practice we will work with a constant multiple of this metric.
Then the tangent bundle of $L$ can be identified with $G\times_K \fp$,
where $\fp$ is the orthogonal complement of the Lie algebra $\fk$ of $K$
in $\fg$. (Note that $K$ acts on $\fp$ by the adjoint action.)
The space $\fp$ inherits an inner product from the inner
product on $\fg$, and thus defines a Riemannian metric on $L$
which will be fixed once and for all.  This metric is $G$-invariant
and has constant positive scalar curvature, and in fact nonnegative
(but not identically zero)
sectional curvature given by the formula \cite[\S5, p.\ 466]{MR0200865}:
\begin{equation*}
K(x\wedge y) = \frac14\Vert [x,y]_\fp\Vert^2 + \Vert [x,y]_\fk\Vert^2,
\end{equation*}
for $x,\,y$ orthonormal in $\fp$.  (This is the only place in this
paper where $K$ denotes sectional curvature, not the
isotropy group.  The notations $[x,y]_\fp$ and $[x,y]_\fk$ refer to
the orthogonal projections of the bracket of $x$ and $y$ into $\fp$
and $\fk$, respectively.)

Now suppose we have a principal $G$-bundle $p\co P\to B$, with base
$B$ a smooth compact manifold. We get an induced associated bundle
$\varphi\co \partial N\to B$, where $\partial N
= P\times_G (G/K) = P/K$.  The total space $\partial N$ of this
$L$-bundle will also be our manifold $\partial M$, for $M$ the
resolution of our (pseudo-)manifold with fibered
$(L,G)$-singularities.

We want to consider a Riemannian structure on $\partial N$ that is
adapted to this fibration structure. We construct this as follows.
Fix a connection $\nabla^p$ on the principal $G$-bundle $p\co P\to B$.
This induces a connection $\nabla^\varphi$ on the $L$-bundle
$\varphi\co \partial N\to B$. The tangent bundle of $\partial N$
splits as the \emph{vertical tangent bundle}, or \emph{tangent bundle
  along the fibers}, which is
\begin{equation*}
P\times_G TL = P\times_G (G\times_K \fp) = P\times_K \fp,
\end{equation*}
direct sum with the pull-back $\varphi^*(TB)$ of
$TB$.

Now we specify a metric on the bundle $\varphi_c\co N\to B$,
where we replace $L$ by the cone $c(L)$ on $L$, where
$c(L)=([0, R]\times L)/(\{0\}\times L)$ (the radius $R$ of
the cone (the distance to the vertex) will be determined later). 
%
We put a Riemannian metric $g_B$ on the base manifold $B$.  On the
complement of the singular stratum (diffeomorphic to $B$) in $N$, we
put the metric which is $(dr^2 + r^2g_L) \oplus \varphi^*_c g_B$,
where $g_L$ is the metric on $L$ defined above transported to the
vertical tangent bundle by making the vertical
fibers totally geodesic. Here the coordinate $r$ denotes the radial
distance from the singular stratum, $0<r\le R$, and $\varphi^*_c g_B$
is put on the horizontal fibers with respect to the connection
$\nabla^{\varphi_c}$. We note that the vertical metric had been
previously denoted by $g_{\partial M/\beta M}$.

Each vertical fiber of $\varphi_c$ is a totally geodesic metric cone on $L$,
with metric $dr^2 + r^2g_L$. Away from the cone point where $r=0$, this is
a warped product metric on $(0,R]\times L$, and so by
\cite[Lemma 3.1]{MR607943}, rederived (apparently independently)
in \cite[Proposition 7.3]{MR720933}, we have:
\begin{lemma}[{\cite[Lemma 3.1]{MR607943} and
      \cite[Proposition 7.3]{MR720933}}]
\label{lem:scalcurvcone}
The scalar curvature function $\kappa$ on each vertical fiber
$c(L)$ of $\varphi_c$ is $(\kappa_L - \kappa_\ell)r^{-2}$,
where $\kappa_\ell = \ell(\ell-1)$,
$\ell=\dim L$, is the scalar curvature of a standard round sphere
$S^{\ell}(1)$ of radius $1$.
\end{lemma}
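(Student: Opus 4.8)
The plan is to recognize each vertical fiber of $\varphi_c$ as a warped product and to invoke the standard formula for the scalar curvature of such a metric; indeed the statement is exactly the content of the two cited references, so I only need to indicate how it follows. First I would note that, by the construction of the metric on $N$ given just above, the restriction of the fiberwise metric to a single cone $c(L)$, away from the vertex, is the warped product metric $dr^2+f(r)^2 g_L$ on $(0,R]\times L$ with warping function $f(r)=r$, flat $1$-dimensional base $\bigl((0,R],dr^2\bigr)$, and fiber $(L,g_L)$ of dimension $\ell$, where $g_L$ has constant scalar curvature $\kappa_L$ by hypothesis.

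Next I would apply the O'Neill-type formula for the scalar curvature of a warped product $B\times_f F$ with $\dim F=\ell$ (this is precisely \cite[Lemma 3.1]{MR607943}, and for this particular warped-product shape also \cite[Proposition 7.3]{MR720933}):
\[
\kappa \;=\; \kappa_B \;+\; \frac{\kappa_L}{f^2} \;-\; 2\ell\,\frac{f''}{f} \;-\; \ell(\ell-1)\,\frac{(f')^2}{f^2},
\]
where $f''$ denotes the trace of the Hessian of $f$ on the (here $1$-dimensional) base. Substituting the cone data — $\kappa_B=0$ since the base is flat, and $f(r)=r$, so $f'=1$ and $f''=0$, which kills the third term — collapses this to
\[
\kappa \;=\; \frac{\kappa_L}{r^2}\;-\;\frac{\ell(\ell-1)}{r^2}\;=\;\frac{\kappa_L-\ell(\ell-1)}{r^2}.
\]
Finally I would recall that the scalar curvature of the unit round sphere $S^\ell(1)$ is $\ell(\ell-1)=\kappa_\ell$ — consistent with the fact that the metric cone over $S^\ell(1)$ is flat $\bR^{\ell+1}$ and hence has vanishing scalar curvature — which rewrites the right-hand side as $(\kappa_L-\kappa_\ell)r^{-2}$, as claimed.

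I do not expect any genuine obstacle here: the only point needing care is matching the sign and Laplacian conventions in the warped-product curvature formula, but since $f''=0$ for the cone the term involving the base Laplacian of $f$ never enters, so the argument is a completely routine specialization of the cited formulas. As a side remark (not part of the statement), note that $\kappa\equiv 0$ on the cone exactly when $\kappa_L=\kappa_\ell$, which is precisely the normalization imposed on $g_L$ in the hypotheses and the reason the cones over $L$ are scalar-flat.
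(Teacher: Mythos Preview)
Your proposal is correct and follows exactly the approach the paper takes: the paper simply observes that the fiber metric $dr^2+r^2 g_L$ is a warped product on $(0,R]\times L$ and invokes the two cited references for the scalar curvature formula, without writing out the computation. Your explicit substitution into the warped-product formula is a faithful expansion of that citation.
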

Note that this is consistent with the fact that
the cone on a standard round sphere $S^{n-1}(1)$, with metric $dr^2 +
r^2g_{S^{n-1}(1)}$, is just flat Euclidean $n$-space.

Now we normalize the bi-invariant Riemannian
metric on $G$ (i.e., $\text{Ad}(G)$-invariant inner product on $\fg$)
so that the scalar curvature $\kappa_L$ comes
out to be exactly the constant $\kappa_\ell$. Because of Lemma
\ref{lem:scalcurvcone}, we obtain:
\begin{corollary}
\label{cor:scalcurvcone}  
With the normalization $\kappa_L=\kappa_\ell$, the conical vertical
fibers of the fibration $\varphi_c$ have scalar
curvature identically $0$, i.e., are scalar-flat.
\end{corollary}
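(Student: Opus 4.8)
The plan is to deduce the statement directly from Lemma~\ref{lem:scalcurvcone}, so the argument is essentially a one-line substitution; the only points worth spelling out are why the claimed normalization is legitimate and why a single rescaling works for every fiber at once.

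First I would recall that, by Lemma~\ref{lem:scalcurvcone}, each conical vertical fiber $c(L)$ of $\varphi_c$, equipped with the warped-product metric $dr^2 + r^2 g_L$, has scalar curvature function equal to $(\kappa_L - \kappa_\ell)r^{-2}$ on its regular part $(0,R]\times L$. Imposing the normalization $\kappa_L = \kappa_\ell$ makes the coefficient $\kappa_L - \kappa_\ell$ vanish, so this function is identically $0$; this is precisely the scalar-flatness asserted, and it is consistent with the observation above that the metric cone on the unit round $\ell$-sphere is flat Euclidean $(\ell+1)$-space.

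Second, I would justify that the normalization is achievable and fiberwise uniform. Since $G$ is compact, connected and semisimple and $L = G/K$ is not a point, the normal homogeneous metric $g_L$ has constant scalar curvature $\kappa_L > 0$: summing the nonnegative sectional curvatures $K(x\wedge y) = \tfrac14\|[x,y]_\fp\|^2 + \|[x,y]_\fk\|^2$ over an orthonormal basis of $\fp$ gives $\kappa_L \ge 0$, and equality would force all brackets $[e_i,e_j]$ to vanish, hence $\fp$ to be an abelian ideal of $\fg$, contradicting semisimplicity. Rescaling the $\Ad$-invariant inner product on $\fg$ by $t^2>0$ rescales $g_L$ by $t^2$ and thus $\kappa_L$ by $t^{-2}$, so a unique $t$ yields $\kappa_L = \kappa_\ell$. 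Because every vertical fiber of $\varphi_c$ carries the same model metric $g_L$ — transported to the vertical tangent bundle via a connection coming from the principal $G$-bundle $p\co P\to B$, on which $G$ acts by isometries — this single choice of scale applies simultaneously to all fibers, and since scalar curvature is a pointwise invariant the conclusion holds uniformly.

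There is essentially no genuine obstacle: all of the geometric content is already packaged into Lemma~\ref{lem:scalcurvcone}, and the remaining work is the bookkeeping above. If anything, the one place requiring (minor) care is confirming that the rescaling of the bi-invariant metric on $G$ used to arrange $\kappa_L=\kappa_\ell$ is exactly the one referred to in the statement and does not disturb the $G$-invariance or the orthogonal splitting of $g_{\p M}$, which it does not, since rescaling an $\Ad$-invariant inner product preserves $\Ad$-invariance.
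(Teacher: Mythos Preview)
Your proof is correct and follows the same approach as the paper: the corollary is an immediate consequence of Lemma~\ref{lem:scalcurvcone} once the normalization $\kappa_L=\kappa_\ell$ is imposed, and the paper says nothing more than that. Your second paragraph, justifying that $\kappa_L>0$ so the normalization can actually be achieved by rescaling, is additional detail the paper leaves implicit.
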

This normalization is made to cancel out the contribution of the
scalar curvature of the fibers to the scalar curvature of an adapted
metric. If instead we had taken $\kappa_L > \kappa_\ell$, then we could
always get an adapted metric of {\psc} on $N$, and half of our problem
would go away.

Note that we still have at our disposal one more normalization, namely
the radius $R$ of the cones.  Since we are taking $\partial N$ to
correspond to $r=R$, that means the scalar curvature of each vertical
fiber $L$ of the bundle $\varphi\co \p N \to B$
is $R^{-2}\kappa_\ell$.

At this point let's summarize the kind of Riemannian metrics we want
to deal with.  First, we fix a homogeneous space
$L=G/K$ with bi-invariant metric $g_L$ with constant scalar
curvature $\kappa_L=\ell(\ell-1)$. Then we we consider only
pseudomanifolds $M_\Sigma$ with $L$-fibered singularities whose
resolution $(M,\varphi\co \p M\to \beta M)$ has bundle $\varphi$ of
the very special form just discussed (i.e., it comes from a principal
$G$-bundle $p\co P\to \beta M$).  We will say that $M_\Sigma$ has
\emph{$(L,G)$-fibered singularities}.

\begin{definition}
\label{def:conicalmetric}
Assume $M_\Sigma$ has $(L,G)$-fibered singularities.  A
\emph{well-adapted} Riemannian wedge metric
on $M_\Sigma = M\cup_{\p M}([0,\varepsilon]\times \p M)\cup_{\p M}N$
is then given by
a Riemannian metric $g_M$ on $M$, a transition metric on the small
collar $[0,\varepsilon]\times \p M$, and a Riemannian metric $g_{\beta
  M}$ on the base of the fibration $\varphi\co \partial M\to \beta
M$. In addition, we assume the principal $G$-bundle $p\co P\to \beta
M$ has been equipped with a connection $\nabla^p$, which in turn
induces a connection $\nabla^\varphi$ on $\varphi$. On the tubular
neighborhood $N$ of $\beta M$, we put the metric $(dr^2 + r^2g_L)
\oplus \varphi_c^*\,g_B$ (\emph{orthogonal} direct sum), which 
is singular along the singular stratum $\beta M$. 

Furthermore, we require the Riemannian metrics to match to second
order along $\p M$, and we require $g_M$ to be a product metric in a
small collar neighborhood of the boundary $\p M$.  In the transition
region $[0,\varepsilon]\times \p M$, we take the metric to have the
form $(dr^2 + f(r)^2g_L) \oplus \varphi^*g_B$, where the $C^2$
function $f$ is given by $R+r$ for $0\le r \le \frac\varepsilon4$ and
by the constant $R + \frac\varepsilon2$ for $R+\frac{3\varepsilon}{4}
\le r\le R+\varepsilon$. In this way we get a $C^2$ interpolation
\footnote{The interpolation could be done in a different
way without making any essential difference, but it's convenient
to make a choice once and for all.}
between the metric on $N$ and the metric on $M$, without affecting
positivity of the scalar curvature.
\end{definition}
When the Riemannian metrics on $N$ and $\partial N$ are of the special
form given in Definition \ref{def:conicalmetric}, then
the bundles $\varphi_c\co (N\smallsetminus \beta M) \to
\beta M$ and $\varphi \co \p M \to \beta M$ are, in
fact, Riemannian submersions with fibers
$\mathring{c}(L)$ (the open cone on $L$, i.e., the
cone with the vertex removed) and $L$, respectively.  

These
submersions have totally geodesic fibers, and translation along the
fibers preserves the horizontal spaces since the splitting of
horizontal and vertical spaces comes from a connection on the
principal bundle $P$, so the O'Neill $T$-tensor (see
\cite[p.\ 460]{MR0200865}) vanishes for both of them.  We will need
the following:
\begin{proposition}[O'Neill]
\label{prop:OMeill}
Given a Riemannian submersion $\varphi\co X\to B$ of Riemannian
manifolds, with totally geodesic fibers all isometric to $F$, where a
Lie group acts transitively on $F$ and preserves horizontal spaces,
the scalar curvatures $\kappa_X$, $\kappa_F$, and $\kappa_B$ are
related as follows:
\[
\kappa_X = \kappa_F + \kappa_B + \sum_{j, m}\Vert A_{x_j}v_m\Vert^2
- 3 \sum_{j\ne k}\Vert A_{x_j}x_k\Vert^2.
\]
Here $\{x_j\}_{j\le\dim B}$
is an orthonormal frame for the horizontal tangent space
and $\{v_m\}_{m\le\dim F}$ is an orthonormal frame for the
vertical tangent space.
\end{proposition}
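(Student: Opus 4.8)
The plan is to obtain this by specializing O'Neill's fundamental equations for a Riemannian submersion \cite{MR0200865} to the case at hand. Recall that a Riemannian submersion $\varphi\co X\to B$ carries two basic tensors: the $A$-tensor, which measures the non-integrability of the horizontal distribution (so $A_xy=\tfrac12\cV[x,y]$ for horizontal $x,y$, extended by $\langle A_xv,y\rangle=-\langle A_xy,v\rangle$ for $v$ vertical), and the $T$-tensor, whose restriction to each fiber is its second fundamental form. Our hypotheses say exactly that the fibers are totally geodesic and that a Lie group acts on each fiber transitively by isometries and preserves horizontal spaces; consequently $T\equiv 0$, the mean curvature vector field $N$ of the fibers vanishes identically, and $A$ is invariant under the fiber action, so in particular $\Vert A\Vert$ is constant along each fiber.

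First I would record O'Neill's curvature identities, already simplified using $T=0$, in an adapted orthonormal frame made of horizontal vectors $\{x_j\}_{j\le\dim B}$ (projecting to an orthonormal frame of $B$) and vertical vectors $\{v_m\}_{m\le\dim F}$ (an orthonormal frame of a fiber $F$): for a horizontal plane, $K_X(x_j\wedge x_k)=K_B(\bar x_j\wedge\bar x_k)-3\Vert A_{x_j}x_k\Vert^2$; for a vertical plane, $K_X(v_m\wedge v_{m'})=K_F(v_m\wedge v_{m'})$ (intrinsic and submersion fiber curvatures coincide once $T=0$); and for a mixed plane, $K_X(x_j\wedge v_m)=\Vert A_{x_j}v_m\Vert^2$ up to a term linear in $\nabla A$. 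Then I would assemble $\kappa_X=\sum_{j\ne k}K_X(x_j\wedge x_k)+2\sum_{j,m}K_X(x_j\wedge v_m)+\sum_{m\ne m'}K_X(v_m\wedge v_{m'})$ and substitute: the purely horizontal terms give $\kappa_B-3\sum_{j\ne k}\Vert A_{x_j}x_k\Vert^2$, the purely vertical terms give $\kappa_F$, and the mixed terms give a multiple of $\sum_{j,m}\Vert A_{x_j}v_m\Vert^2$ together with a sum of $\nabla A$-terms. Finally, using the skew-symmetry $\langle A_xv,y\rangle=-\langle A_xy,v\rangle$, which yields $\sum_{j,m}\Vert A_{x_j}v_m\Vert^2=\sum_{j\ne k}\Vert A_{x_j}x_k\Vert^2$, the collected expression can be rearranged into the stated form
\[
\kappa_X=\kappa_F+\kappa_B+\sum_{j,m}\Vert A_{x_j}v_m\Vert^2-3\sum_{j\ne k}\Vert A_{x_j}x_k\Vert^2.
\]

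The step requiring genuine care is the vanishing of the $\nabla A$-contribution produced by the mixed planes. By O'Neill's structural relations between $\nabla A$, $T$, and $N$ (see \cite{MR0200865}, or any standard exposition of O'Neill's equations), the relevant trace is, up to sign, a divergence of the mean curvature vector field $N$; since $N\equiv0$ for totally geodesic fibers, it vanishes. Alternatively, since $A$ is invariant under the transitive isometric fiber action, the scalar $\langle(\nabla_{x_j}A)(x_j,v_m),v_m\rangle$ is constant along each fiber and integrates to zero over it, hence vanishes. I expect the rest --- writing out O'Neill's identities, keeping track of which terms die when $T=0$, and using the skew-symmetry of $A$ --- to be routine bookkeeping; the substantive point is simply that the totally geodesic hypothesis (equivalently, the transitive isometric action preserving horizontal spaces) simultaneously eliminates the $T$-terms and the divergence term, leaving an expression built only from the $A$-tensor.
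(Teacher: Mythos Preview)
Your approach is the same as the paper's: the paper simply notes that the totally-geodesic hypothesis kills the $T$-tensor and then sums O'Neill's sectional-curvature identities from \cite[Corollary~1, p.~465]{MR0200865} over all ordered pairs in the adapted frame $\{x_j,v_m\}$. One remark: in O'Neill's Corollary~1 the mixed-plane formula already reads $K(X,V)=\langle(\nabla_XT)_VV,X\rangle-\Vert T_VX\Vert^2+\Vert A_XV\Vert^2$, with no $\nabla A$ term present, so once $T=0$ you obtain $K(X,V)=\Vert A_XV\Vert^2$ directly and your entire paragraph on eliminating a $\nabla A$-contribution is unnecessary. Incidentally, both of your proposed justifications for that vanishing are shaky as written --- the traced term you describe is not literally the divergence of the mean curvature vector, and ``constant along the fiber, hence integrates to zero over it'' is a non sequitur --- but since the term never appears in the formula you actually need, this does not affect the validity of the argument.
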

\begin{proof}
  Since the $T$-tensor vanishes identically under the conditions
  of the proposition, this follows immediately from
  \cite[Corollary 1, p.\ 465]{MR0200865} after summing over
  all pairs of distinct elements of the orthonormal frame
  $\{x_j,\,v_m\}_{j\le\dim B,\,m\le\dim F}$.
\end{proof}
As a consequence of Proposition \ref{prop:OMeill}, we obtain the following
theorem, which is an important part of Theorems \ref{thm:psobstruction}
and \ref{thm:pscsufficiency}.
\begin{theorem}
\label{thm:bundlepsc}
Let $L=G/K$ be a homogeneous space of a compact connected semisimple
Lie group $G$, equipped with a $G$-invariant metric of scalar
curvature $\kappa_L=\kappa_\ell$ as above, and let $M_\Sigma$ be a
compact manifold with $(L,G)$-fibered singularities,
with resolution $(M,\varphi\co \p M\to \beta M)$, where the $L$-bundle
comes from a principal $G$-bundle $p\co P\to \beta M$. Then the
following hold.
\begin{enumerate}
\item $\p M$ always has a well-adapted Riemannian metric of \psc.
\item If $\beta M$ has a metric of \psc, then its tubular neighborhood
  $N$ has a well-adapted metric of \psc.
\item If the tubular neighborhood $N$ of $\beta M$ has a well-adapted
  metric of \psc, then $\beta M$ has a metric of \psc.
\end{enumerate}
\end{theorem}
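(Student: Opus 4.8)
The plan is to read off all three assertions from O'Neill's formula (Proposition~\ref{prop:OMeill}), applied to the two Riemannian submersions over $\beta M$ that occur here --- the link bundle $\varphi\co \p M\to\beta M$ with fibre $L$, and the cone bundle $\varphi_c\co N\setminus\beta M\to\beta M$ with fibre the open cone $\mathring{c}(L)$ --- together with the scalar-flatness of the cone fibres (Corollary~\ref{cor:scalcurvcone}) and a scaling estimate for the O'Neill $A$-tensor. First I would rewrite Proposition~\ref{prop:OMeill} in a cleaner form. Using the skew-adjointness relation $\langle A_x v,y\rangle=-\langle A_x y,v\rangle$ (valid for $x,y$ horizontal and $v$ vertical) together with $A_{x_j}x_j=0$, one checks that $\sum_{j,m}\Vert A_{x_j}v_m\Vert^2=\sum_{j\ne k}\Vert A_{x_j}x_k\Vert^2=:\Vert A\Vert^2$, so that Proposition~\ref{prop:OMeill} collapses to
\[
\kappa_X=\kappa_F+\kappa_B-c_A\,\Vert A\Vert^2 ,\qquad c_A>0,\quad \Vert A\Vert^2\ge 0;
\]
in particular $\kappa_X\le\kappa_F+\kappa_B$.

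Next I would record how $\Vert A\Vert^2$ behaves under shrinking of the fibre. For horizontal lifts, $A_{x_j}x_k=\tfrac12\,[\widetilde x_j,\widetilde x_k]^{\mathrm v}$ is the value at the given point of the fundamental vector field of the curvature $\Omega(x_j,x_k)\in\fg$ of $\nabla^p$ acting on the fibre. Since the horizontal distribution is the one induced by the fixed principal connection $\nabla^p$ and $\{x_j\}$ is orthonormal for the fixed base metric $\varphi_c^*g_{\beta M}$, the element $\Omega(x_j,x_k)$ stays bounded, uniformly over the compact manifold $\beta M$. Because the $G$-action on $c(L)$ fixes the radial coordinate, the corresponding fundamental field at radius $r$ is tangent to the slice $\{r\}\times L$, where the metric is $r^2 g_L$; hence $\Vert A_{x_j}x_k\Vert^2$ scales like $r^2$ along the cone, and like $c^2$ if the link fibre metric is taken to be $c^2 g_L$. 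Consequently there is a constant $C>0$, depending only on $g_{\beta M}$, $g_L$ and $\nabla^p$, with $\Vert A\Vert^2\le C r^2$ on $\{0<r\le R\}\subset N$ and $\Vert A\Vert^2\le C c^2$ on $\p M$.

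The three assertions now follow by compactness. \emph{For (1)}: give $\p M=P\times_G L$ the submersion metric with connection-induced horizontal lift, base metric $g_{\beta M}$ and vertical metric $c^2 g_L$ (this is the restriction to $\p M$ of a well-adapted wedge metric on $M_\Sigma$, with $c=R+\tfrac{\varepsilon}{2}$, and $c$ may be taken as small as one likes); since rescaling multiplies scalar curvature by $c^{-2}$, one gets $\kappa_{\p M}=c^{-2}\kappa_\ell+\kappa_{\beta M}-c_A\Vert A\Vert^2\ge c^{-2}\kappa_\ell-\sup_{\beta M}|\kappa_{\beta M}|-c_A C c^2$, which is positive everywhere once $c$ is small, because $\kappa_\ell=\ell(\ell-1)>0$. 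No hypothesis on $\beta M$ is used --- hence ``always''. \emph{For (2)}: let $g_{\beta M}$ be a psc metric on $\beta M$, fix any $\nabla^p$, and give $N$ the well-adapted metric $(dr^2+r^2 g_L)\oplus\varphi_c^*g_{\beta M}$ with small cone radius $R$; the cone fibres are scalar-flat by Corollary~\ref{cor:scalcurvcone}, so $\kappa_N=\kappa_{\beta M}-c_A\Vert A\Vert^2\ge\min_{\beta M}\kappa_{\beta M}-c_A C R^2>0$ on $N\setminus\beta M$ once $R$ is chosen small. \emph{For (3)}: a well-adapted metric on $N$ is, by Definition~\ref{def:conicalmetric}, of the form $(dr^2+r^2 g_L)\oplus\varphi_c^*g_{\beta M}$ on $N\setminus\beta M$ for some $g_{\beta M}$, $\nabla^p$ and $R$; the same computation gives $0<\kappa_N=\kappa_{\beta M}-c_A\Vert A\Vert^2\le\kappa_{\beta M}$ at every point, and since $N\setminus\beta M$ surjects onto $\beta M$ this forces $\kappa_{\beta M}>0$ everywhere, i.e.\ $g_{\beta M}$ is itself a psc metric on $\beta M$.

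The only genuinely delicate point, which I expect to be the main obstacle, is the second paragraph: checking that Proposition~\ref{prop:OMeill} really applies to $\varphi$ and $\varphi_c$ (totally geodesic fibres, with the horizontal distribution preserved under fibrewise translation --- both follow from the splitting coming from the principal connection $\nabla^p$, as already noted in the text preceding Proposition~\ref{prop:OMeill}), and, above all, pinning down the precise scaling $\Vert A_{x_j}x_k\Vert^2=O(r^2)$ of the $A$-tensor in the radial direction of the cone. Once that is established the positivity estimates are immediate, and the contrast between (1), which needs no assumption on $\beta M$, and (2)--(3), which do, is exactly the contrast between the strictly positive fibre term $c^{-2}\kappa_\ell$ for the link $L$ and the vanishing fibre term for the scalar-flat cones $c(L)$.
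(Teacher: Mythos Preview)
Your proof is correct and follows the same overall strategy as the paper --- apply O'Neill's formula (Proposition~\ref{prop:OMeill}) to the two submersions over $\beta M$ and then scale a parameter to control the $A$-tensor contribution --- but your execution differs from the paper's in two respects worth noting.

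First, you simplify the O'Neill formula via the general skew-adjointness identity $\sum_{j,m}\Vert A_{x_j}v_m\Vert^2=\sum_{j\ne k}\Vert A_{x_j}x_k\Vert^2$, obtaining $\kappa_X=\kappa_F+\kappa_B-2\Vert A\Vert^2$. The paper instead argues directly that $A_x v=0$ for horizontal $x$ and vertical $v$ in a well-adapted metric, killing the first sum outright. Your route is cleaner and avoids that geometric claim. Second, for part~(2) the paper rescales the \emph{base} metric $g_{\beta M}\mapsto t^2 g_{\beta M}$ and sends $t\to\infty$, so that $\kappa_{\beta M}$ scales like $t^{-2}$ while the $A$-term scales like $t^{-4}$; you instead keep $g_{\beta M}$ fixed and shrink the cone radius $R$, using your observation that $\Vert A\Vert^2=O(r^2)$ because the fundamental vector fields of the $G$-action are tangent to the $L$-slices and hence have norm $O(r)$ in the conical metric $r^2 g_L$. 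Both scalings work; yours has the mild advantage of leaving the given psc metric on $\beta M$ untouched and varying only the free parameter $R$ already present in Definition~\ref{def:conicalmetric}. Parts~(1) and~(3) are handled identically in both arguments.
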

\begin{proof}
  We start with an observation about well-adapted metrics, which is that
  for $x$ a horizontal vector and $v$ a vertical vector, $A_x(v)=0$.
  The reason is that if we choose a geodesic $\gamma$ with
  $\dot\gamma(0)=x$, then by the construction of adapted metrics,
  $\dot\gamma(t)$ stays horizontal and the parallel transport of $v$ along
  $\gamma$ remains vertical.  Thus the horizontal component of
  $\nabla_{\dot \gamma}(v)$ vanishes, and $A_x(v)=0$.  This removes
  one error term from the formula of Proposition \ref{prop:OMeill}.
  
  (1). If $L=G$, so that $\p M$ has a free action of $G$, then this is
  an easy special case of the main result of \cite{MR0358841}. In general,
  this part is codified as the Observation in
  \cite[p.\ 512]{MR1189863}. We can deduce it from Proposition
  \ref{prop:OMeill} by choosing $R$ very small (recall that in effect
  we are taking the fibers of $\varphi$ to be copies of $L$ with
  the diameter multiplied by a factor of $R$), and thus with scalar
  curvature $R^{-2}\kappa_\ell$.  This then swamps all the other terms
  on the right in Proposition \ref{prop:OMeill}.

  (2). Let's look at the formula of Proposition \ref{prop:OMeill} for
  the scalar curvature of $N$. By our assumption on the metric on $L$,
  the scalar curvature of the $\mathring{c}(L)$
  fibers vanishes.  Let's rescale the metric on $\beta M$ by
  multiplying lengths of vectors by $t$.  Note that $\Vert
  A_{x_j}x_k\Vert$ is computed in the metric of the vertical fibers,
  which we are keeping fixed.  However, when we rescale the metric
  from $g_{\beta M}$ to $t^2g_{\beta M}$, our orthonormal frame
  changes from $\{x_j,\,v_m\}_{j,m}$ to $\{t^{-1}x_j,\,v_m\}_{j,m}$.
  Thus with the rescaled metric, the scalar curvature of $N$ becomes
  \[
  t^{-2}\kappa_B   - 3 \sum_{j\ne k}t^{-4}\Vert A_{x_j}x_k\Vert^2.
  \]
  Now note that if the error term on the right were zero,
  we would have $\kappa_B>0$ and if and only if $\kappa_N>0$.
  This is not quite true for the original metric if the error
  term is nonzero, but if we let $t\to\infty$, the negative
  term (involving $\Vert A_{x_j}x_k\Vert^2$) goes to zero fastest.
  So if $\kappa_B>0$, eventually the rescaled value of $\kappa_N$ becomes
  positive. This proves (2).

  (3). Suppose $\kappa_N$ is strictly bigger than $0$. In our situation,
  we have
  \[
  \kappa_N = \kappa_B -(\text{something nonnegative}),
  \]
  so $\kappa_B>0$.
\end{proof}  

\begin{example}
\label{ex:K3bundle}
In this example, we take $L=G=\SU(2)=S^3$.
Let $\beta M$ be a K3 surface, a spin simply connected $4$-manifold
with Betti numbers $1,0,22,0,1$ and $\widehat  A(K3)=2$.
In particular, $\beta M$ does not admit a metric of {\psc}.
However, there is a one-parameter family of principal $S^3$-bundles $P$
over $\beta M$, classified by the second Chern class $c_2$.  (Indeed, since 
$[S^4, \bH\bP^\infty]\cong \pi_3(S^3)\cong \bZ$, we have such a
family of bundles over $S^4$, and we just pull them back via a map
$\beta M\to S^4$ of degree $1$.)  The total space $\p M$ of each such
bundle is a simply connected spin $7$-manifold, and since
$\Omega^\spin_7=0$, it is a spin boundary, say the boundary of an
$8$-manifold $M$.

The manifold $M_\Sigma$ obtained by gluing the disk bundle $D\cL$ of
the quaternionic line bundle $\cL$ associated to
the principal bundle $P$ to $M$, for any choice of $c_2$, is a simply
connected spin $8$-manifold.  We can take it to be the double of
$D\cL$ along $\beta M$, which will have $\widehat A$-genus $0$, and
then it admits a metric of {\psc}.  However, it does not admit a
\emph{well-adapted} metric of {\psc}, since Theorem
\ref{thm:bundlepsc}(3) would imply then that $\beta M$ admits a
metric of {\psc}, which it does not.
\end{example}
\begin{example}
\label{ex:eta2}
We again take $L=G=S^3$.  Here is another nontrivial example of a spin
manifold with $S^3$-fibered singularities.  Note that the set of
quaternionic line bundles $\cL$ on $S^n$ is parametrized by $[S^n,
  B(\Sp(1))] = \pi_{n-1}(S^3)$, which is finite for all $n\ge 5$ and
usually nonzero.  Take for example $n=6$.  Then $[S^6, B(\Sp(1))] =
\pi_5(S^3)\cong \bZ/2$, with the generator represented by $\eta^2$
($\eta$ the generator of the stable $1$-stem, as usual).  So there are
two distinct quaternionic line bundles $\cL$ over $S^6$, both of which
are rationally trivial (since $c_2$ must vanish).  For each of them,
$\partial(D\cL)$ is a principal $S^3=\Sp(1)$-bundle over $S^6$,
rationally equivalent to $S^3\times S^6$. (When $\cL$ is trivial,
$\partial(D\cL)=S^3\times S^6$, and when $L$ is nontrivial,
$\partial(D\cL)= (S^3\vee S^6)\cup e^9$, with the attaching map of the
$9$-cell given by $S^8\xrightarrow{\eta^2} S^6\hookrightarrow S^3\vee
S^6$.)  In both cases, this manifold $\partial M$, since it carries a
free $S^3$-action, admits positive scalar curvature \cite{MR0358841}
and thus has trivial $\alpha$-invariant in $ko_9\cong
  \bZ/2$.  It is a spin boundary, since we can fill in each $S^3$
fiber over $S^6$ with a $4$-disk, and we can get a simply connected
spin manifold $M^{10}$ with boundary $\partial M$.  After gluing in
$D\cL$, we have a closed simply connected singular spin $10$-manifold
$M_\Sigma$, which we can take to be the double of $D\cL$ along its
boundary.  This admits positive scalar curvature, even if we require
our metric to be well adapted in the sense of Definition
\ref{def:conicalmetric}, by Theorem \ref{thm:bundlepsc}.

Part of the reason why this example is interesting is that if the
bundle $p$ is nontrivial, then the classifying map
$S^6\to \bH\bP^\infty$ represents a nontrivial class in
$\widetilde{ko}_6(\bH\bP^\infty)\cong ko_2 \cong \bZ/2$
(see the following Theorem \ref{thm:koHP}), since the
generator of $ko_2$ is represented by $\eta^2$.
\end{example}
We will need as a technical tool the following theorem pointed
out to us by Bob Bruner:
\begin{theorem}[Bruner--Greenlees]
\label{thm:koHP}  
  $ko_*(\bH\bP^\infty)$ is a free
$ko_*$-module on generators $z_j$ in dimensions
$4j$, $j\in\bN$.  The action of $ko^*(\bH\bP^\infty)\cong
ko^*[[z]]$, $\dim z = 4$, is by $z\cdot z_j = z_{j-1}$.
\end{theorem}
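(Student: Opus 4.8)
The plan is to compute $ko_*(\mathbb{HP}^\infty)$ by means of the Adams spectral sequence, using the well-understood structure of $H^*(\mathbb{HP}^\infty;\mathbb{F}_2)$ as a module over the Steenrod algebra $\mathcal A$, and the standard description of $ko$ at the prime $2$ via the change-of-rings isomorphism over $\mathcal A(1)=\langle \Sq^1,\Sq^2\rangle$. First I would recall that $H^*(\mathbb{HP}^\infty;\mathbb{F}_2)=\mathbb{F}_2[u]$ with $\deg u = 4$, and identify its $\mathcal A(1)$-module structure: since $\Sq^1$ acts trivially (all generators are in degrees $\equiv 0 \bmod 4$ and $\Sq^1 u^j$ lands in an odd-degree group which vanishes) and $\Sq^2 u^j = j\, u^j \cdot (\text{class in degree }2)$ also vanishes for degree reasons, while $\Sq^4 u^j = j\, u^{j+1}$, the module $H^*(\mathbb{HP}^\infty)$ decomposes over $\mathcal A(1)$ as a direct sum of trivial modules $\Sigma^{4j}\mathbb{F}_2$, one for each $j\ge 0$. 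Dually, $H_*(\mathbb{HP}^\infty;\mathbb{F}_2)$ is a direct sum of copies of $\mathbb{F}_2$ concentrated in degrees $4j$. The key input is then that $\Ext_{\mathcal A(1)}(\mathbb{F}_2,\mathbb{F}_2)$ is the $E_2$-page computing $ko_*$ of a point, and $\Ext_{\mathcal A(1)}(H_*(\mathbb{HP}^\infty),\mathbb{F}_2)\cong \bigoplus_{j\ge 0}\Sigma^{4j}\Ext_{\mathcal A(1)}(\mathbb{F}_2,\mathbb{F}_2)$ is a free module on classes $z_j$ in degree $4j$.

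The next step is to check that the Adams spectral sequence degenerates: there is no room for differentials because each summand $\Sigma^{4j}$ contributes a shifted copy of the point's spectral sequence, and the differentials in the latter all vanish ($ko_*(\pt)$ is classical), while there can be no differentials crossing between different summands since a differential $d_r$ strictly raises filtration, and a free-module spectral sequence on a graded-free $E_2$ over the $E_2$ of the sphere forces the differentials to be determined summand-by-summand by naturality under the collapse maps $\mathbb{HP}^\infty \to \mathbb{HP}^\infty/\mathbb{HP}^{j-1}$ and the inclusions $\mathbb{HP}^j\hookrightarrow\mathbb{HP}^\infty$. Hence $E_\infty = E_2$, and $ko_*(\mathbb{HP}^\infty)$ is a free $ko_*$-module on generators $z_j$ in dimension $4j$, $j\in\mathbb{N}$, with no extension problems (again by comparison with the split situation over $\mathcal A(1)$). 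One should also check the result rationally and at odd primes, where $ko$ is just $\bigoplus_{k\ge 0}\Sigma^{4k}H\mathbb{Z}_{(p)}$ on the relevant summand and $H_*(\mathbb{HP}^\infty;\mathbb{Z})$ is free on generators in degrees $4j$ — this is routine and confirms the freeness integrally. Finally, the module structure over $ko^*(\mathbb{HP}^\infty)\cong ko^*[[z]]$, $\dim z = 4$: the class $z$ is the Euler/Thom-type class detected by $u\in H^4$, and cap product with $z$ acts on homology as the dual of multiplication by $u$, which on the $\mathcal A(1)$-module level sends the generator in degree $4j$ to the generator in degree $4(j-1)$; thus $z\cdot z_j = z_{j-1}$, with $z\cdot z_0 = 0$.

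I expect the main obstacle to be the careful justification that the Adams spectral sequence collapses and has no hidden extensions \emph{as a module spectral sequence}, rather than merely additively; this requires being a little careful about the multiplicative structure and the naturality maps rather than just counting dimensions. In practice this is handled cleanly because $H_*(\mathbb{HP}^\infty;\mathbb{F}_2)$ is a sum of \emph{suspended trivial} $\mathcal A(1)$-modules, so $ko\wedge \mathbb{HP}^\infty_+$ splits $2$-locally as a wedge of suspensions of $ko$, and one reads off everything — additive structure, $ko_*$-freeness, and the $ko^*$-module action — directly from this splitting. Alternatively, one can cite the explicit computation in work of Bruner–Greenlees (which is where this theorem comes from) and simply verify that the stated generators and the action of $z$ match their description; I would present both the spectral-sequence sketch and the reference, since the theorem is used only as a black box in what follows.
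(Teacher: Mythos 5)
Your proposal is correct in outline but takes a genuinely different route from the paper. The paper offers three arguments: an inductive collapse of the Atiyah--Hirzebruch spectral sequence for $ko_*(\bH\bP^n)$; the vanishing of the boundary maps in the cofiber sequences relating $\bH\bP^{n-1}$, $\bH\bP^n$, and $S^{4n}$; and (the route it prefers) quoting Bruner--Greenlees' computation $ko^*(\bH\bP^\infty)\cong ko^*[[z]]$ and deducing the homological statement from the universal coefficient theorem and the local cohomology spectral sequence, which is also how the exact module action $z\cdot z_j=z_{j-1}$ is pinned down. You instead work $2$-locally with the Adams spectral sequence and the change of rings to $\mathcal{A}(1)$: since $\Sq^1$ and $\Sq^2$ act trivially on $\bF_2[u]$ for degree reasons, $H^*(\bH\bP^\infty;\bF_2)$ splits over $\mathcal{A}(1)$ as $\bigoplus_j\Sigma^{4j}\bF_2$, so $ko\wedge\bH\bP^\infty_+$ splits $2$-adically as a wedge of suspensions of $ko$; combined with the routine rational and odd-primary checks this gives freeness. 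Your approach buys a self-contained computation that exhibits a splitting of the spectrum itself, at the cost of assembling the primes, whereas the paper's citation-based route delivers the $ko^*[[z]]$-module structure with no extra work. Two points to tighten: the cleanest reason there are no Adams differentials is that $d_r(z_j)$ would land in total degree $4j-1\equiv 3$ or $7 \pmod 8$, where $\Ext_{\mathcal{A}(1)}(\bF_2,\bF_2)$ vanishes, which is sharper than the appeal to naturality; and your cap-product argument only shows $z\cdot z_j=z_{j-1}$ modulo terms of lower Atiyah--Hirzebruch filtration, so one should either re-choose generators inductively by setting $z_{j-1}:=z\cdot z_j$ or invoke the local cohomology spectral sequence, as the paper does, to get the relation on the nose.
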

\begin{proof}
  There are a few possible proofs.  One is to show by induction on $n$ that
  the Atiyah-Hirzebruch spectral sequence (AHSS)
  \[
  H_p(\bH\bP^n, ko_q) \Longrightarrow ko_{p+q}(\bH\bP^n)
  \]
  collapses at $E_2$, which is proved in
  \cite[Lemmas 2.4 and 2.5]{MR0254841},
  or alternatively, that the boundary map in
  the exact cofiber sequence
  \[
  \cdots \to ko_j(\bH\bP^{n-1}) \to ko_j(\bH\bP^n) \to
  \widetilde{ko}_j(S^{4n}) \to \cdots
  \]
  vanishes, which is proved in \cite[Corollary 3.1]{Thakur}.

  The proof suggested to us by Bruner is a bit slicker; it is
  shown in \cite[p.\ 86, Theorem 5.3.1]{MR2723113} that
  $ko^*(\bH\bP^\infty) = ko^*[[z]]$ for $z$ in dimension $4$. The
  additive result follows by the universal coefficient theorem and the
  $ko^*[[z]]$-module structure follows from the local cohomology
  spectral sequence \cite{MR1888194}.
\end{proof}
\section{Relevant Bordism Theory}
\label{sec:main}

\subsection{Bordism of pseudomanifolds}
It is known that a meaningful bordism theory in the framework of
stratified pseudomanifolds requires some restrictions on their
stratification and the equivalence relation, see, say,
\cite{Banagl-book}. On the other hand, under some natural restrictions,
such bordism groups could be highly interesting. For instance, the
Witt-bordism groups are such, and they emerged naturally in the
contexts related to the signature and to the signature operator
\cite{MR714770}.
In our geometrical context, we will consider the following two
bordism groups: $\Omega^{\spin, L\fb}_*$ and $\Omega^{\spin, (L,G)\fb}_*$.

We start with $\Omega^{\spin, L\fb}_*$. Let $\Sigma=(L)$, and let
$M_{\Sigma}$ and $M_{\Sigma}'$ be two pseudomanifolds with fibered
$L$-singularities. Then a bordism $W_{\Sigma} \co
M_{\Sigma}\rightsquigarrow M_{\Sigma}'$ is a pseudomanifold
$W_{\Sigma}$ with \emph{boundary} and with fibered $L$-singularities.
This means that $W_{\Sigma}=W\cup -N(\beta W)$, where the
resolving manifold $W$ is a spin manifold with corners and the boundary
$\p W$ of the resolution is given a splitting
$\p W =\p^{(0)} W \cup \p^{(1)} W$, where
$$
\p^{(0)}W = M\sqcup -M', \ \ 
\p (\p^{(1)} W) = \p M \sqcup - \p M', \ \ \
\p(\beta W) = \beta M\sqcup -\beta M', \ \ 
$$
i.e., $\p^{(1)} W \co \p M\rightsquigarrow \p M'$ and 
$\beta W \co \beta M\rightsquigarrow \beta M'$ are usual
bordisms between closed spin manifolds, and
$\p^{(0)} W \cap \p^{(1)} W = \p M \sqcup - \p M'$.
Furthermore, it is assumed that the
fiber bundle $F \co  \p^{(1)}W \to \beta W$
restricts to the fiber bundles $f\co \p M \to \beta M$ and
$f'\co \p M' \to \beta M'$ respectively.
This gives a well-defined bordism group $\Omega^{\spin, L\fb}_*$ and,
in fact, a bordism theory $\Omega^{\spin, L\fb}_*(-)$. 

\begin{remark}
It is worth noticing that, in the above setting, the bordism group
$\Omega^{\spin, L\fb}_*$ is rather complicated. Indeed, according to
our definition, an $L$-fibration $f\co \p M \to \beta M$ is just a smooth
fiber bundle, and thus such a fibration is classified by a map
$\beta M \to \BDiff(L)$. Then the  correspondence
$M_{\Sigma} \mapsto (\beta M \to \BDiff(L))$
defines a Bockstein homomorphism
$\beta\co \Omega^{\spin, L\fb}_*\to \Omega^{\spin}_{*-\ell-1}(\BDiff(L))$.
There is also a transfer homomorphism
$\tau\co \Omega^{\spin}_{*}(\BDiff(L))\to \Omega^{\spin}_{*+\ell}$ which
takes a bordism class $B\to \BDiff(L)$ to the corresponding smooth
$L$-fiber bundle $E\to B$. All together, these fit into an exact
triangle of homology
theories
\begin{equation}\label{eq:bordism-general}
  \begin{diagram} \setlength{\dgARROWLENGTH}{1.95em}
    \node{\Omega^{\spin}_*(-)} \arrow[2]{e,t}{i}
    \node[2]{\Omega^{\spin,L\fb}_*(-)} \arrow{sw,b}{\beta} \\
    \node[2]{\Omega^{\spin}_*(\BDiff(L)\wedge-)} \arrow{nw,b}{\tau}
  \end{diagram}
\end{equation}
where $i$ is a transformation considering each spin manifold as a
pseudomanifold with empty singularities. Thus the complexity of the
bordism group $\Omega^{\spin,L\fb}_*$ is determined by the classifying
space $\BDiff(L)$, which is known to be very complicated for almost
every smooth manifold $L$.
\end{remark}

\subsection{Spin manifolds with fibered $(L,G)$-singularities}
We recall our definition of a closed manifold with fibered
$(L,G)$-singularities. For this we need to fix the following
topological data:
\begin{enumerate}
\item[(i)] a closed spin smooth manifold $L$ (a link, as above),
  $\dim L=\ell$;
\item[(ii)] a Lie group $G$ mapping to $\Diff(L)$ with a lifting
  of $G$ to an action on the principal spin bundle. (This is
  necessary for some things we will do later involving the Dirac operator.)
\end{enumerate}  
These data are good enough to construct a relevant bordism theory;
however, we would also like to fix some geometrical data, namely a
Riemannian metric $g_L$ on the link $L$ such that
\begin{enumerate}
\item[(iii)] the scalar curvature $\kappa_{L}$ of the metric $g_L$
  is the positive constant $\kappa_\ell$ and the Lie group $G$ acts
  transitively by a subgroup of the
  isometry group $\mathrm{Isom}(L,g_L)\subset \Diff(L)$.
\end{enumerate} 
We denote by $BG$ the corresponding classifying space and by $EG\to
BG$ the universal principal bundle. We say that
$\pi\co E\to B$ is an $(L,G)$-\emph{fiber bundle} if
it is a smooth fiber bundle with fiber $L$ and structure group
$G$. There is a universal $(L,G)$-fiber bundle $E(L)\to BG$, where
$E(L):=EG\times_G L$.  Then for any $(L,G)$-fiber bundle $\pi\co E\to B$,
there is a classifying map $f\co B\to BG$ such that $E=f^*E(L)$.

As above, we denote by $c(L)$ the cone over $L$ and
let $G$ act on $c(L)$ slice-wise.  We obtain an associated
$(c(L),G)$-fiber bundle $\pi_c \co N(B)\to B$,
where $N(B) = f^* (EG\times_G c(L))$.  Note that if $B$ is a closed
manifold, then $N(B)$ is a singular manifold with boundary $\p N(B)=E$
and singular set $B$. We obtain a commutative diagram of fiber
bundles:
\begin{equation}\label{eq:1}
  \begin{diagram}
    \setlength{\dgARROWLENGTH}{1.95em}
  \node{E = \p N(B)}
           \arrow{e,t}{i}
           \arrow{s,l}{\pi}
  \node{N(B)}
             \arrow{s,l}{\pi_c}
  \\
  \node{B}
  \arrow{e,t}{Id}
  \node{B}
  \end{diagram}
\end{equation}
where the fiber $L$ is identified with the boundary of the cone $c(L)$.

Let $\displaystyle M_{\Sigma}=M\cup_{\p M}N(\beta M)$,
where $M$ is a spin manifold with boundary $\p M$, where $\p M$
is the total space of an $(L,G)$-fiber bundle $\varphi\co \p M\to \beta  M$
given by its structure map $f\co \beta M\to BG$. 
$\displaystyle M_{\Sigma}$ is a \emph{closed} $(L,G)$-singular spin
pseudomanifold.
\begin{definition}\label{def:4-2}
We say that a closed $(L,G)$-singular spin pseudomanifold
$M_{\Sigma}=M\cup_{\p M}N(\beta M)$ as above is an $(L,G)$-\emph{boundary} if
there is a spin manifold $\bar M$ (with corners) such that
\begin{enumerate}
\item[(1)] $\p \bar M = \p^{(0)} \bar M  \cup \p^{(1)} \bar M $, where 
$\p^{(1)} \bar M=M$ (i.e., we have chosen a diffeomorphism $\p^{(1)} \bar
M\cong M$);
\item[(2)] $\p^{(0)} \bar M $ is a total space of an $(L,G)$-fiber bundle 
$\bar \varphi\co \p^{(0)} \bar M \to \beta \bar M $
given by a structure map $\bar f\co \beta \bar M \to BG$ with the
restriction $\p(\beta \bar M )=\beta M$ and $\bar
f|_{\p(\beta \bar M )}= f$;
\item[(3)] $\p(\p^{(0)} \bar M )=\p M$ and the
restriction $\bar \varphi|_{\p(\p^{(0)} \bar M )}= \phi$.
\end{enumerate} 
Then the space
$\displaystyle\bar M_{\Sigma}=\bar M \cup_{\p \bar M }N(\beta \bar M)$
is a \emph{spin manifold
with fibered $(L,G)$-singularities with boundary
$\delta \bar M_{\Sigma}=M_{\Sigma}$.} For short, we say that $\bar M_{\Sigma}$
is an \emph{$(L,G)$-singular spin pseudomanifold with boundary
$\delta \bar M_{\Sigma}=M_{\Sigma}$}.
\end{definition}
Now we say that two $(L,G)$-singular spin pseudomanifolds $M_{\Sigma}$,
$M_{\Sigma}'$ are bordant if there exists an $(L,G)$-singular spin
pseudomanifold $W_{\Sigma}$ with boundary
$\delta W_{\Sigma}=M_{\Sigma}\sqcup -M_{\Sigma}'$. Given a bordism as above,
we use the notation
$W_{\Sigma}\co M_{\Sigma}\rightsquigarrow M_{\Sigma}'$. This
determines corresponding bordism groups 
$\Omega_n^{\spin, (L,G)\fb}$ and a bordism theory
$\Omega_*^{\spin, (L,G)\fb}(-)$. We now give
a few more details in this direction.

\begin{definition}\label{def:4-3}
Let $M_{\Sigma}=M\cup_{\p M}N(\beta M)$ and let
$f\co \beta M \to BG$ be the corresponding structure map, so that
we have a commutative diagram
\begin{equation}\label{eq:3}
  \begin{diagram}
    \setlength{\dgARROWLENGTH}{1.1em}
  \node{\p M}
           \arrow{e,t}{\hat f}
           \arrow{s,l}{\phi}
  \node{E(L)}
             \arrow{s,l}{\phi_0}
  \\
  \node{\beta M}
  \arrow{e,t}{f}
  \node{BG}
  \end{diagram}
\end{equation}
A map $\xi\co M\to X$ \emph{determines an element in}
$\Omega_*^{\spin, (L,G)\fb}(X)$ if the restriction
$\xi|_{\p M}$ coincides with the composition
$pr\circ (\hat f \times \xi_{\beta})$,
where $\xi_{\beta}\co \beta M \to X$ is some map and the map
$(\hat f \times \xi_{\beta})$ is given by the following commutative diagram:
\begin{equation}\label{eq:4}
  \begin{diagram}
    \setlength{\dgARROWLENGTH}{2.05em}
  \node{\p M}
           \arrow{e,t}{\hat f \times \xi_{\beta}}
           \arrow{s,l}{\phi}
  \node{E(L)\times X }
             \arrow{s,r}{\phi_0\times Id}
              \arrow{e,t}{pr}
  \node{X}            
  \\
  \node{\beta M}
  \arrow{e,t}{f\times \xi_{\beta}}
  \node{BG\times X}
  \end{diagram}
\end{equation}
Here $pr$ is a projection on the second factor. We note that, by
definition, there is a canonical extension of such a map $\xi\co M\to X$
to a map $\xi_{\Sigma} \co M_{\Sigma}\to X$.
\end{definition}
Let $i\co\Omega^{\spin}_*(-)\to \Omega^{\spin, (L,G)\fb}_*(-)$ be the natural
transformation given by considering a
closed manifold as a pseudomanifold with empty $(L,G)$-singularity. Then the
natural transformation $\beta\co \Omega^{\spin,(L,G)\fb}_*(-)\to
\Omega^{\spin}_{*-\ell-1}(BG\wedge -)$ is given by the correspondence
$M_{\Sigma} \mapsto (f\co\beta M \to BG)$. Finally, the
natural transformation $T\co\Omega^{\spin}_{*}(BG\wedge -)\to
\Omega^{\spin}_{*+\ell}(-)$ is a transfer map which takes a map
$f\co  B\to BG$ to the induced $(L,G)$-bundle $E\to B$.
\begin{proposition}
\label{prop:bordismtriangle}
There is an exact triangle of bordism theories:
\begin{equation}\label{eq:5}
  \begin{diagram}
    \setlength{\dgARROWLENGTH}{1.95em}
  \node{\Omega^{\spin}_*(-)}
          \arrow[2]{e,t}{i}
  \node[2]{\Omega^{\spin,(L,G)\fb}_*(-)}
          \arrow{sw,t}{\beta}
  \\
  \node[2]{\Omega^{\spin}_{*}(BG\wedge -)}
  \arrow{nw,t}{T} \node {.}
  \end{diagram}
\end{equation}
\end{proposition}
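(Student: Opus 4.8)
The plan is to prove Proposition~\ref{prop:bordismtriangle} geometrically, taking for granted (as recorded just before the statement) that $\Omega^{\spin,(L,G)\fb}_*(-)$ is a homology theory, and concentrating on constructing the three natural transformations and verifying exactness at each of the three vertices directly from Definitions~\ref{def:4-2} and~\ref{def:4-3}. The transformation $i$ just regards a closed spin $X$-cycle as a pseudomanifold with empty singular stratum, and is evidently a map of homology theories. For $\beta$, a cycle $M_\Sigma=M\cup_{\partial M}N(\beta M)$ over $X$ goes to the spin $X$-cycle $\beta M$ equipped with its classifying map $f\colon\beta M\to BG$ and with the reference map $\xi_\beta$; this respects bordism because, by definition, the singular stratum $\beta W$ of a bordism $W_\Sigma$ is an honest spin bordism over $BG\times X$ between the respective singular strata. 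For $T$, a cycle $f\colon B\to BG$ over $X$ goes to the total space $E=f^*E(L)$ of the associated $(L,G)$-bundle, with reference map $E\to B\to X$ and with the spin structure it inherits from the fixed lift of the $G$-action to the principal spin bundle --- this is precisely where hypothesis (ii) of the setup is used, and it is also what makes the associated-bundle construction turn a spin bordism over $BG$ into a spin bordism, so that $T$ is well defined. The degree conventions ($i$ of degree $0$, $\beta$ of degree $-\ell-1$, $T$ of degree $+\ell$) are forced by $\dim\partial M=\dim\beta M+\ell$.

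The three ``composite equals zero'' relations are immediate. We have $\beta\circ i=0$ because every cycle in the image of $i$ has empty singular stratum. We have $T\circ\beta=0$ because $T(\beta M,f,\xi_\beta)=[\partial M]$, and $\partial M$ is the boundary of the spin manifold $M$, the reference maps matching by Definition~\ref{def:4-3}. And $i\circ T=0$ because the closed spin manifold $E=f^*E(L)$, viewed with empty singularity, bounds the pseudomanifold obtained from the cone bundle over $B$: its resolution $f^*\bigl(EG\times_G(L\times[0,R])\bigr)$ is a spin manifold with boundary one copy of which is $E$ and the other is the $(L,G)$-bundle over $B$, and gluing that other copy to the cone bundle $N(B)$ yields an $(L,G)$-singular spin pseudomanifold with boundary $E$, all reference maps being pulled back from $B\to X$ (cf.\ the diagram \eqref{eq:1}).

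For the reverse inclusions I would argue as follows. \emph{Exactness at the middle term.} If $\beta[M_\Sigma]=0$, pick a spin nullbordism $V$ of $\beta M$ over $BG\times X$; the associated $(L,G)$-bundle $E_V$ over $V$ is then a spin nullbordism of $\partial M$, so $M'=M\cup_{\partial M}E_V$ is a closed spin $X$-cycle, and the cylinder $M'\times I$ --- with the boundary component $M'\times\{0\}$ split along the embedded copy of $\partial M$ into an $M$-face and an $E_V$-face, the latter glued to the cone bundle $N(V)$ over $V$ --- is a bordism exhibiting $[M_\Sigma]=i[M']$. \emph{Exactness at the bottom term.} If $T[B,f]=0$, say $E=f^*E(L)$ bounds a spin manifold $M$ over $X$, then $M_\Sigma=M\cup_{\partial M}N(B)$ is a closed $(L,G)$-singular spin pseudomanifold over $X$ with $\beta[M_\Sigma]=[B,f]$. \emph{Exactness at the top term.} If $i[M_0]=0$, a nullbordism $\bar M_\Sigma=\bar M\cup N(\beta\bar M)$ presents $\bar M$ as a spin manifold whose boundary is the disjoint union of $M_0$ and the $(L,G)$-bundle $\bar f^*E(L)$ over $\beta\bar M$; hence $[M_0]=[\bar f^*E(L)]=T[\beta\bar M,\bar f]$ in $\Omega^{\spin}_*(X)$.

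The one part that requires genuine care is the bordism in the first of these steps: one must check that $M'\times I$ with $N(V)$ attached along the $E_V$-face really satisfies all the axioms in Definition~\ref{def:4-2} --- that its singular stratum is $V$ with $\partial V=\beta M$, that the faces $\partial^{(1)}W=E_V$ and $\partial^{(0)}W=M\sqcup-M'$ meet correctly along $\partial M$, and that the spin structures and the reference maps to $X$ agree across every corner and collar. None of this is hard, but it is the bookkeeping that carries the argument; the lift of the $G$-action again guarantees that the spin structure on $E_V$ restricts to the given one on $\partial M$. Finally, I would remark that this exact triangle can alternatively be obtained more formally as the long exact sequence of the cofiber of a transfer map of spectra --- the fiberwise Umkehr map $\Sigma^{\ell}\bigl(\MSpin\wedge BG_+\bigr)\to\MSpin$ associated to the universal $(L,G)$-bundle $E(L)\to BG$, whose fiber $L$ carries the relevant spin structure --- which identifies $\Omega^{\spin,(L,G)\fb}_*(-)$ with the homology theory of that cofiber and makes exactness automatic; but the geometric derivation above is more in keeping with the present exposition.
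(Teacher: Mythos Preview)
Your proof is correct and follows essentially the same geometric approach as the paper: verify that the three composites vanish and then exhibit explicit bordisms for the reverse inclusions at each vertex. Your treatment is in fact somewhat more detailed than the paper's (spelling out the bordism $M'\times I\cup N(V)$ at the middle vertex, tracking the spin structures, and adding the cofiber-sequence remark); the only quibble is a sign at the top vertex --- since $\partial\bar M=M_0\sqcup\bar f^*E(L)$ as an oriented boundary one gets $[M_0]=-[\bar f^*E(L)]=T(-[\beta\bar M,\bar f])$, as the paper writes --- but that is a harmless orientation convention.
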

\begin{proof}
As usual, that the composites $i\circ T$, $\beta\circ i$, and
$T\circ \beta$ vanish is clear.  To prove exactness, the coefficients
$(-)$ ``come along for the ride,'' so we just give the proof without
them for simplicity of notation. If $T([B\to BG])=0$, that means that
the total space $E$ of the induced $(L,G)$-bundle $E\to B$ is a
spin boundary, so we can write $E=\p M$ for some spin manifold $M$,
which means that $[B=\beta M\to BG]$ is in the image of $\beta$.
If $\beta([M_\Sigma])=0$, that means we have $(M, \p M)$ with
an $(L,G)$-bundle $\p M\to \beta M$ so that the underlying principal
$G$-bundle $G\to P\to \beta M$ bounds in $\Omega^{\spin}_{*}(BG)$.
Suppose we have a spin manifold $N$ with boundary, mapping to $BG$,
so that $\p N = \beta M$ and the principal $G$-bundle $P' \to N$
extends $P\to \beta M$.  Then $P' \times_G L = M'$ bounds $\p M$, and
$M_\Sigma$ is bordant as a spin manifold
with $(L,G)$-singularities to the closed manifold $M\cup_{\p M} -M'$,
with $M'\to N$ providing the bordism, and hence $[M_\Sigma]$
hence comes from the image of $i$.  Finally, suppose we have
a closed spin manifold $M$ with $i([M])=0$. That means there is a spin
$(L,G)$-pseudomanifold $W_\Sigma$ with $\delta W_\Sigma = M$.
Resolving, we have a spin manifold $W$ (with boundary\footnote{Usually
there would be corners, but here the corner
$\partial^{(0)}W \cap \partial^{(1)}W$ is empty.}) with
$\partial W = \partial^{(0)}W \sqcup \partial^{(1)}W$, where
$\partial^{(0)}W$ is closed (since $\p M=\emptyset$),
$\partial^{(1)}W = M$, and $\partial^{(0)}W$ is is the total space of
an $(L,G)$-bundle $\partial^{(0)}W\to \beta W$, with $\beta W$ also
closed since $\beta M=\emptyset$.  This shows $M$ is spin bordant
to $-\partial^{(0)}W$, and thus $[M] = T(-[\beta W\to BG])$.
\end{proof}
\subsection{Bordism theorem}
One of our goals is to push a well-adapted metric of {\psc} through an
$(L,G)$-singular bordism. Here is the key result:
\begin{theorem}[Bordism Theorem]
\label{thm:4-1}
Assume $G$ is a simply connected Lie group and $L$ is spin.
Let $M_{\Sigma}$, $M'_{\Sigma}$ be two $(L,G)$-singular spin
pseudomanifolds of dimension $n\geq 6 + \ell$
representing the same class $x\in \Omega^{\spin,(L,G)\fb}_{n}$, with
$M$ and $\beta M$ simply connected.
Assume $M'_{\Sigma}$ has a well-adapted psc-metric
$g'$. Then there exists an $(L,G)$-bordism $W_{\Sigma}\co
M_{\Sigma}\rightsquigarrow M_{\Sigma}'$ together with a well-adapted
psc-metric $\bar g$ which is a product metric near the boundary
$\delta  W_{\Sigma}= M_{\Sigma}\sqcup - M_{\Sigma}'$ such that
$\bar g|_{M_{\Sigma}'}=g'$. In particular, $M_{\Sigma}$ admits a well-adapted
psc-metric $g$.
\end{theorem}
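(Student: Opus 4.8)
The plan is to combine the exact triangle of bordism theories (Proposition \ref{prop:bordismtriangle}) with surgery-theoretic arguments applied \emph{separately} to the resolution $M$ and to the singular locus $\beta M$, following the general philosophy of Gromov--Lawson, Schoen--Yau, and Gajer--Hajduk for pushing psc-metrics through bordisms, adapted to the fibered-singularity setting. Since $M_\Sigma$ and $M'_\Sigma$ represent the same class in $\Omega^{\spin,(L,G)\fb}_n$, there is an $(L,G)$-singular spin bordism $V_\Sigma\co M_\Sigma \rightsquigarrow M'_\Sigma$; resolving it gives a manifold $V$ with corners, with $\partial^{(0)}V \to \beta V$ an $(L,G)$-bundle over a bordism $\beta V\co \beta M \rightsquigarrow \beta M'$ and $\partial^{(1)}V$ the bordism between the resolutions relative to the fibered boundaries. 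The goal is to modify $V_\Sigma$ by surgeries, compatibly with the fibration structure near the singular stratum, until the bordism becomes an \emph{h-cobordism-like} object along which the given well-adapted psc-metric $g'$ extends.

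The key steps, in order, are as follows. \emph{Step 1: Reduce to surgeries in the interior.} Using the dimensional hypothesis $n \ge \ell+6$ and simple connectivity of $M$ and $\beta M$, perform surgeries on the interior of $\beta V$ (away from its boundary) to make $\beta V$ into a bordism that is $2$-connected, i.e. kill $\pi_0,\pi_1$; since $G$ is simply connected, the classifying map to $BG$ and hence the $(L,G)$-bundle structure extends over the trace of each such surgery (this is where the simple connectivity of $G$ is essential — the bundle data has no obstruction to extending). Pull back the $(c(L),G)$-bundle over the enlarged base to replace the neighborhood $N(\beta V)$. \emph{Step 2: Surgeries on the resolution compatible with $\partial$.} Next, do surgeries on the interior of $V$ (equivalently $\partial^{(1)}V$ relative to its ends) to make $V$ a highly-connected bordism from $M$ to the manifold $M \cup_{\partial M} -M'$ obtained from Proposition \ref{prop:bordismtriangle}; again $n \ge \ell+6$ ensures the surgeries are below the middle dimension and simple connectivity is preserved. \emph{Step 3: Propagate the metric.} Now invoke the Gromov--Lawson--Schoen--Yau surgery theorem, in the \emph{well-adapted} form: on the smooth part $V\smallsetminus N(\beta V)$ one uses the classical psc surgery theorem to carry $g'|_{M'}$ across the (codimension $\ge 3$) surgeries; near the singular stratum one uses Theorem \ref{thm:bundlepsc} — by choosing the cone radius $R$ small and rescaling the base metric $g_{\beta V}$ as in the proof of that theorem, the well-adapted metric on $N(\beta V)$ has psc as soon as it does on the smooth collar, and its restriction over $\beta V$ can be interpolated through the surgeries on $\beta M$. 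The O'Neill formula (Proposition \ref{prop:OMeill}) guarantees that after rescaling the positivity is governed by $\kappa_{\beta V}$ plus a nonnegative term, so psc on $\beta V$ (achievable by the smooth psc-bordism theorem applied to $\beta V$, since $\beta M$ is simply connected and spin of dimension $n-\ell-1 \ge 5$) yields psc on the whole tubular neighborhood. Assembling a product metric near $\delta W_\Sigma = M_\Sigma \sqcup -M'_\Sigma$ from these pieces gives the desired $\bar g$ with $\bar g|_{M'_\Sigma} = g'$, and restricting to the $M_\Sigma$ end produces the claimed well-adapted psc-metric $g$ on $M_\Sigma$.

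The main obstacle I expect is \textbf{Step 3, and in particular the compatibility of the metric surgeries on the smooth part with those over the singular stratum}: one must arrange that a surgery performed on $\beta V$ (which changes the base of the link bundle) and the correlated surgeries on $\partial^{(1)}V$ and the interior of $V$ can be carried out so that the Gromov--Lawson handle attachments on the total space of $\partial M \to \beta M$ respect the submersion/connection structure, keeping the fibers totally geodesic (so that the $A$-tensor bound used in Theorem \ref{thm:bundlepsc} survives) while simultaneously maintaining product form in the collar $[0,\varepsilon]\times\partial M$. This requires a \emph{parametrized} or \emph{family} version of the psc surgery theorem over the changing base, combined with a careful choice of the interpolating warping function $f(r)$ as in Definition \ref{def:conicalmetric}; the technical heart is checking that the rescaling parameter $t$ (on $g_{\beta V}$) and the cone radius $R$ can be chosen uniformly along the bordism so that no positivity is lost at any intermediate stage. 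Everything else — the existence of the bordism, the surgery-below-the-middle-dimension arguments, the extension of the $BG$-structure using $\pi_1(G)=0$ — is routine given the cited results and the dimensional hypothesis.
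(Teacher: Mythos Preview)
Your overall strategy matches the paper's: first improve the bordism by surgery on $\beta W$ and then on the interior of $W$, using simple connectivity of $G$ (hence $3$-connectedness of $BG$) to extend the classifying map over the traces; then push the psc-metric across via Gromov--Lawson. The paper in fact isolates the purely topological part as a separate ``Surgery Theorem'' (make $(\beta W,\beta M)$ and $(W,M)$ both $2$-connected) and then applies Gromov--Lawson twice, first on $\beta W$ and then on the interior of $W$.

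Where you go astray is in your identification of the ``main obstacle.'' There is no need for a parametrized or family version of the psc surgery theorem, and no need to track the $A$-tensor or the interpolating function $f(r)$ through surgeries on the total space. The point is that a \emph{well-adapted} metric on $N(\beta W)$ is completely determined by three pieces of data: the metric $g_{\beta W}$ on the base, a connection on the principal $G$-bundle, and the fixed $G$-invariant metric $g_L$ on $L$. So once you have applied the \emph{classical} Gromov--Lawson theorem to the base bordism $\beta W$ (which you can, since $(\beta W,\beta M)$ is $2$-connected and $\dim\beta W\ge 6$), you obtain a psc metric $g_{\beta W}$ extending $g_{\beta M'}$, and Theorem~\ref{thm:bundlepsc}(2) hands you a well-adapted psc metric on $N(\beta W)$ automatically---no surgery on the total space of the link bundle is performed directly. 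Then Gromov--Lawson is applied a second time, again classically, to the interior of $W$, using that $(W,M)$ is $2$-connected. The fiber structure never has to be ``respected'' by a handle attachment, because handles on the base are lifted to $N(\beta W)$ via the bundle construction, not by surgering the total space.

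Two smaller points: (i) in Step~1 you need the \emph{pair} $(\beta W,\beta M)$ to be $2$-connected, not merely $\beta W$ simply connected; killing $\pi_0,\pi_1$ of $\beta W$ is not enough---you must also kill $\pi_2(\beta W,\beta M)$, which is where $BG$ being $3$-connected is actually used. (ii) The phrase in Step~2 about making $V$ a bordism ``from $M$ to $M\cup_{\partial M} -M'$'' is garbled; $W$ should remain a bordism from $M$ to $M'$ (rel the fibered boundary), and you want $(W,M)$ to be $2$-connected so that the handle decomposition from the $M'$ side has all handles of codimension $\ge 3$.
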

This result will follow from a purely topological result which
is just surgery-theoretic:
\begin{theorem}[Surgery Theorem]
\label{thm:surg}
Assume $G$ is a simply connected Lie group and $L$ is spin.
Let $M_{\Sigma}$, $M'_{\Sigma}$ be two $(L,G)$-singular spin pseudomanifolds
of dimension $n \geq 6 + \ell$
representing the same class $x\in \Omega^{\spin,(L,G)\fb}_{n}$.
Assume that $M_{\Sigma}=M\cup_{\p M} N(\beta M)$,
$M_{\Sigma}'=M'\cup_{\p M'} N(\beta M')$ with corresponding structure
maps $f \co\beta M \to BG$ and $f' \co\beta M \to BG$.
Also assume that $M$ and $\beta M$ are spin and simply
connected.\footnote{This implies that $M_\Sigma$ is $1$-connected, since 
$\beta M$ and $L$ $1$-connected imply that $\p M$ is $1$-connected, 
and by Van Kampen's Theorem, 
$\pi_1(M_\Sigma) = \pi_1(M) *_{\pi_1(\p M)} \pi_1(\beta M)$.}
Then there exists an $(L,G)$-bordism
$W_{\Sigma}\co M_{\Sigma}\rightsquigarrow M_{\Sigma}'$,
$W_{\Sigma}= W\cup_{\p W} N(\beta W)$, with a structure map
$\bar f \co \beta W\to BG$, such that
$(\beta W, \beta M)$ and $(W, M)$ are $2$-connected.
\end{theorem}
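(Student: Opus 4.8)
The plan is to perform surgeries on the resolution data $(W,\beta W)$ of an arbitrary $(L,G)$-bordism between $M_\Sigma$ and $M'_\Sigma$ so as to make the pairs $(\beta W,\beta M)$ and $(W,M)$ $2$-connected, exactly as in the classical surgery-below-the-middle-dimension argument, but carried out compatibly with the fibered-singularity structure. Since $M_\Sigma$ and $M'_\Sigma$ represent the same class $x\in \Omega^{\spin,(L,G)\fb}_n$, Definition \ref{def:4-2} gives us \emph{some} $(L,G)$-singular spin bordism $W_\Sigma = W\cup_{\p W}N(\beta W)$ with $\delta W_\Sigma = M_\Sigma \sqcup -M'_\Sigma$, equipped with a structure map $\bar f\co \beta W\to BG$ restricting to $f$ and $f'$. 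The idea is to improve $W_\Sigma$ within its bordism class without changing the boundary.

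First I would handle the Bockstein piece $\beta W$. The map $\bar f\co \beta W\to BG$ together with the inclusion $\beta M\hookrightarrow \beta W$ gives a relative situation; since $BG$ is simply connected ($G$ is connected, so $BG$ is simply connected; in fact $G$ simply connected gives $\pi_2(BG)=0$ as well, which will matter when we lift spin structures) and $\beta M$ is simply connected, I would first do surgeries on the interior of $\beta W$ on embedded circles and then on $2$-spheres to kill $\pi_1(\beta W,\beta M)$ and $\pi_2(\beta W,\beta M)$ — i.e. to make $(\beta W,\beta M)$ $2$-connected — while extending the map to $BG$ over the trace of each surgery (possible because we are working below the dimension where obstructions to extending a map into a simply connected space appear, using $n-\ell-1 = \dim\beta W \ge 5$, which follows from $n\ge 6+\ell$). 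Each such surgery on $\beta W$ must be accompanied by the corresponding surgery on the total space: a framed embedded $S^k\subset \beta W$ (for $k\le 2$) lifts, via the $(L,G)$-bundle $F\co \p^{(1)}W\to \beta W$ pulled back by $\bar f$, to a surgery on the $(L,G)$-bundle over $\beta W$, i.e. we do the ``fibered surgery'' $S^k\times c(L)$ replaced by $D^{k+1}\times \p(c(L))$ inside $N(\beta W)$. This is exactly the kind of move allowed in an $(L,G)$-singular bordism, so the result is still an $(L,G)$-singular spin bordism with the same boundary. Because the structure group is $G$, which acts by isometries, and $L$ is spin with a $G$-equivariant spin structure, the spin structure is carried along.

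Next I would make $(W,M)$ $2$-connected. After the first stage, $\beta W$ and hence (by the van Kampen computation in the footnote, using $L$ and $\beta M$ simply connected) $\p M$ and $M_\Sigma$ are in good shape on the Bockstein side. Now $W$ is a spin manifold with corners whose boundary decomposes as $\p^{(0)}W\cup \p^{(1)}W$ with $\p^{(1)}W = M\sqcup -M'$; I would perform surgeries on the interior of $W$ on circles and $2$-spheres to kill $\pi_1(W,M)$ and $\pi_2(W,M)$. Here the key points are: (a) $\dim W = n+1\ge 7+\ell \ge 7$, so we are well below the middle dimension and Whitney's trick applies to embed the relevant spheres; (b) these surgeries are done in the interior of $W$, away from $N(\beta W)$, so they do not interfere with the already-arranged structure near the singular stratum; (c) spin bordism surgeries preserve the spin structure automatically. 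Since $M$ is simply connected we do not need to worry about $\pi_1(W)\to\pi_1(M)$ being an isomorphism beyond making $W$ simply connected too.

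The main obstacle I anticipate is the \emph{compatibility} of the two stages of surgery: the surgeries on $\beta W$ force corresponding fibered surgeries on $N(\beta W)$, which alter $\p^{(1)}W$ near its boundary $\p M\sqcup -\p M'$, and one must check that these fibered surgeries can be performed so that the net effect on the boundary $\delta W_\Sigma$ is trivial — i.e. that after the surgeries $\delta W_\Sigma$ is still exactly $M_\Sigma\sqcup -M'_\Sigma$ and not some other representatives. The resolution is to do all surgeries in the \emph{interior} of the various strata, using collar neighborhoods of the boundary to keep everything product-like near $\delta W_\Sigma$; the surgeries then take place in a bordism rel boundary, so the effect on $\delta W_\Sigma$ is literally none. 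A secondary technical point is bookkeeping with corners: $W$ has the corner $\p^{(0)}W\cap \p^{(1)}W = \p M\sqcup -\p M'$, and one must make sure the surgeries and the extension of $\bar f$ respect the corner structure; this is routine once one works with the appropriate collars, exactly as in the handle-theoretic treatment of bordisms of manifolds with fibered boundary. Finally, that the extension of $\bar f$ over the traces exists is where simple-connectivity of $BG$ (equivalently, of $G$, beyond connectedness) is used: obstructions to extending a map into $BG$ over a $(k+1)$-cell lie in $\pi_{k+1}(BG) = \pi_k(G)$, which vanishes for $k\le 2$ when $G$ is simply connected with $\pi_2(G)=0$ — and $\pi_2$ of any Lie group vanishes, so $\pi_1(G)=0$ alone already suffices for $k\le 2$.
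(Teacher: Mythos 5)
Your proposal is correct and follows essentially the same route as the paper's proof: take an arbitrary $(L,G)$-bordism, do surgery below the middle dimension first on $\beta W$ (killing $\pi_1$ and relative $\pi_2$, extending the map to $BG$ over the traces using that $BG$ is $2$-connected because $\pi_1(G)=\pi_2(G)=0$, and lifting these surgeries to modifications of $N(\beta W)$), then repeat on the interior of $W$ to make $(W,M)$ $2$-connected. Your extra remarks on collars, corners, and keeping $\delta W_\Sigma$ fixed are sensible elaborations of points the paper leaves implicit.
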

\begin{proof}[Proof of Theorem \ref{thm:surg}]
Start with any $(L,G)$-bordism
$W_{\Sigma}\co M_{\Sigma}\rightsquigarrow M_{\Sigma}'$.  Recall that this comes
with a map $F\co \beta W\to BG$ restricting to the given bundle
data $f\co \beta M\to BG$ and $f'\co \beta M'\to BG$.
First we modify
$W$ by surgery to reduce to the case where $(\beta W, \beta M)$ is
$2$-connected.  Recall that we are assuming $\beta M$ and $M$ are simply
connected, though we make no such assumption on $\beta M'$ and $M'$.
We begin by killing $\pi_1(\beta W)$ through surgery. Recall that we are
assuming that $\dim(\beta M)\ge 5$, so $\dim(\beta W)\ge 6$.
Given any class in $\pi_1(\beta W)$, we can represent it by an
embedded circle, which will have trivial normal bundle.  Since
$G$ is simply connected, $BG$ will actually be $3$-connected (any Lie
group has vanishing $\pi_2$, and we are assuming $\pi_1(G)=0$).
So we can do surgery on this circle so that $F$ extends over the trace
of the surgery, which will be a manifold $V$ with boundary
which we can attach to $\beta W$.  Thus we can suppose that $\beta W$
is simply connected. Next, look at the exact sequence
$\pi_2(\beta W) \to \pi_2(\beta W, \beta M)\to \pi_1(\beta M)=0$.  If
$\pi_2(\beta W, \beta M)\ne 0$, we can represent any generator of this
group by an embedded $S^2$ in $\beta W$.  Since everything is spin,
this $2$-sphere has trivial normal bundle.  Again, the map
$F$ from this $2$-sphere to $BG$ is null-homotopic since $BG$
is $2$-connected (even $3$-connected). So again we can do surgery
so that $F$ extends over the trace of the surgery.  After attaching
the traces of all surgeries needed to $\beta W$, we have reduced
to the case where $(\beta W,\beta M)$ is $2$-connected.

Te next step is to do something similar on the interior of $W$
to make $(W,M)$ $2$-connected.  The argument is exactly the same.
Note by the way that whatever changes we make in $W$ can automatically
be lifted up to changes in $W_{\Sigma}$ by lifting surgeries on
$\beta W$ to modifications of $N(\beta W)$.
\end{proof}
\begin{proof}[Proof of Theorem \ref{thm:4-1}
from Theorem \ref{thm:surg}]
Apply Theorem \ref{thm:surg} and assume we have a bordism $W$
with $(W,M)$, $(\beta W, \beta M)$ $2$-connected.  That means
that we can decompose the bordism into a sequence of surgeries
on $(M', \p M'\to \beta M')$ (compatible with the map to $BG$
on the Bockstein) which are always in codimension $3$ or more.
Then we apply the Gromov-Lawson surgery theorem
\cite{MR577131}, first on the Bocksteins, to push a psc metric
on $\beta M'$ to one on $\beta M$.  Since the bordism is compatible
with the maps to $BG$, we get a well-adapted metric of {\psc}
on the tubular neighborhood of $\beta W$.  The next step is to
push the psc metric on the interior of $M'$ to one on the interior
of $M$.  For this we use the Gromov-Lawson surgery theorem
again, possible since $(W,M)$ is $2$-connected. 
\end{proof}
\section{$KO$-obstructions on  $(L,G)$-fibered pseudomanifolds}
\label{sec:obstructions}
Let $M_\Sigma$ be a pseudomanifold with
$(L,G)$-fibered singularities. Let $f\co\beta M\to BG$ be the associated
classifying map and let $g$ be a well-adapted wedge metric on
$M_\Sigma$. Recall that by definition, the restriction $g_{\p M} =
g|_{\p M}$ is consistent with the natural vertical metric
$g_{\p   M/\beta M}$ on $\p M$ induced by the psc-metric $g_L$ on the link
$L$. See Definition \ref{def:conicalmetric} for details. 

Recall from Theorem \ref{thm:bundlepsc} that $g_{\p M}$ is a
psc-metric: indeed we can rescale the fiber metric $g_L$ to achieve
that.  Moreover, if $g_{\beta M}$ is a psc-metric on $\beta M$, then,
up to rescaling, $g_{N(\beta M)}$ is also a psc-metric on $N(\beta
M)$.  Vice versa, if $g_{N(\beta M)}$ is a psc-metric, then
$g_{\beta   M}$ is also a psc-metric. 

Denote, as above, $g_M=g|_{M}$. We know that in this setting the
$KO$-classes $\alpha_{{\rm cyl}}(M,g_M)$ and $\alpha_w (M_\Sigma,g)$
are well defined. 
Moreover, Proposition \ref{prop:gluing} implies that
these classes coincide for any pseudomanifold $M_\Sigma$ with
$(L,G)$-fibered singularities, provided  $g_{N(\beta M)}$ is a psc-metric.

Assume now that $g$ on $M_\Sigma$ is a psc-metric. Then, obviously,
the metric $g_{N(\beta M)}$ is also psc and so is $g_{\beta (M)}$.  This
implies that
\begin{equation}\label{beta-obstruction}
\alpha (\beta(M),g_{\beta M})=0\quad\text{in}\quad KO_{n-\ell-1}\,.
\end{equation} 
and 
\begin{equation}\label{obstruction}
\alpha_{{\rm cyl}} (M,g_M)= \alpha_w (M_\Sigma,g)=0\;\;\text{in}\;\; KO_n\, .
\end{equation}
with the first equality in \eqref{obstruction} following from Proposition \ref{prop:gluing}, as we have already remarked,
and the second one from the classic results of Gromov-Lawson
or from Theorem \ref{theo:geometric-witt-part1}, item (4).

Formulae \eqref{beta-obstruction} and \eqref{obstruction} prove the
obstruction theorem (Theorem \ref{thm:psobstruction} stated in the
Introduction).

Note, however, that the class
$f_* [\Di_{\beta M}]\in KO_{n-\ell-1} (BG)$ is \emph{not} an obstruction.
Example \ref{ex:eta2} is a counterexample.

These obstructions are in fact obtained from suitable group
homomorphisms, as we shall now explain.
\begin{proposition}\label{prop:bordism-map-alpha}
Let $\Omega^{\spin,(L,G)\fb}_*$ be the
bordism group as above. Then we have well defined homomorphisms:
\begin{equation}\label{bordism-hom}
\alpha_{{\rm cyl}} \co \Omega^{\spin,(L,G)\fb}_* \to KO_*  \quad\text{and}\quad 
\end{equation}
\begin{equation}\label{bordism-hom-2}
\alpha_{\beta M}\co \Omega^{\spin,(L,G)\fb}_* \to KO_{*-\ell-1} \,.
\end{equation}
\end{proposition}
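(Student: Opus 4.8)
The plan is to treat the two homomorphisms separately: $\alpha_{\beta M}$ is essentially formal, while $\alpha_{{\rm cyl}}$ requires a bordism-invariance argument for the cylindrical index. For $\alpha_{\beta M}$, I would observe that the assignment $[M_\Sigma]\mapsto[\beta M]$ is the composite of the Bockstein $\beta\co\Omega^{\spin,(L,G)\fb}_*\to\Omega^{\spin}_{*-\ell-1}(BG)$ of Proposition~\ref{prop:bordismtriangle} with the map $\Omega^{\spin}_{*-\ell-1}(BG)\to\Omega^{\spin}_{*-\ell-1}$ induced by $BG\to\pt$; concretely, an $(L,G)$-bordism $W_\Sigma\co M_\Sigma\rightsquigarrow M'_\Sigma$ restricts to an honest spin bordism $\beta W$ from $\beta M$ to $\beta M'$, so the spin-bordism class of $\beta M$ depends only on the class of $M_\Sigma$. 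Then $\alpha_{\beta M}$ is this homomorphism followed by the classical $KO$-valued $\alpha$-homomorphism $\Omega^{\spin}_{*-\ell-1}\to KO_{*-\ell-1}$; additivity is additivity of $\alpha$ over disjoint unions.

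For $\alpha_{{\rm cyl}}$, I would first set up the construction. Given $M_\Sigma$ with $(L,G)$-fibered singularities, Theorem~\ref{thm:bundlepsc}(1) provides a well-adapted psc metric $g_{\p M}$ on $\p M=P\times_G L$, built from a connection $\nabla^p$ on $P$, a base metric $g_{\beta M}$, and the fixed link metric $g_L$ scaled by a sufficiently small cone radius $R$. Choose such data, extend to a metric $g_M$ on $M$ that is a product near $\p M$, and set $\alpha_{{\rm cyl}}([M_\Sigma]):=\alpha_{{\rm cyl}}(M,g_M)\in KO_n$. The next step is to check independence of the choices: the space of admissible data (connections on $P$, base metrics, radii $R$) is convex, hence connected, and along any compact path of such data one fixed small $R$ makes all the resulting boundary metrics psc simultaneously by Theorem~\ref{thm:bundlepsc}(1); since $\alpha_{{\rm cyl}}(M,g_M)$ depends only on $M$ and on $g_{\p M}$, and is locally constant along a path of psc boundary metrics (the parametrised-parametrix argument of Section~\ref{remark:dependence}), the class is well defined given $M_\Sigma$.

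The heart of the proof is bordism invariance. Given an $(L,G)$-bordism $W_\Sigma\co M_\Sigma\rightsquigarrow M'_\Sigma$ with resolution $W$ (a spin manifold with corners, $\p W=\p^{(0)}W\cup\p^{(1)}W$, $\p^{(0)}W=M\sqcup -M'$, and $\p^{(1)}W$ the total space of an $(L,G)$-bundle over $\beta W$), I would choose the connection on the principal bundle over $\beta W$ and the cone radius $R$ on $W_\Sigma$ first, use these to induce the boundary data on $\p M$ and $\p M'$, and then apply Theorem~\ref{thm:bundlepsc}(1) to $\p^{(1)}W$ (with $R$ small) to get a well-adapted psc metric on $\p^{(1)}W$ restricting near its two ends to $g_{\p M}$ and $g_{\p M'}$. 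Extend this to a metric on $W$ that is a product near every face and restricts to $g_M,g_{M'}$ on $\p^{(0)}W$. Attaching half-cylinders along $\p M$, $\p M'$, and $\p^{(1)}W$ turns $W$ into a spin cobordism, with a psc metric on its cylindrical link-face, from $M_\infty$ to $M'_\infty$; invariance of the $C\ell_n$-linear index under such a cobordism — a relative index theorem deduced by the same Bunke-type Mayer–Vietoris cut-and-paste used in Proposition~\ref{prop:gluing} — yields $\alpha_{{\rm cyl}}(M,g_M)=\alpha_{{\rm cyl}}(M',g_{M'})$. Combined with well-definedness this shows $\alpha_{{\rm cyl}}$ factors through $\Omega^{\spin,(L,G)\fb}_n$; additivity under disjoint union is additivity of the index, and the zero class maps to $0$ (bordism to the empty pseudomanifold).

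I expect the genuine obstacle to be the corner-and-cylindrical-end bookkeeping in the invariance step: arranging that the metric produced by Theorem~\ref{thm:bundlepsc}(1) on $\p^{(1)}W$ really does restrict to the pre-chosen boundary metrics, which forces the order in which one selects connection, radius, and base metric on $W_\Sigma$ versus on $M_\Sigma$, and formulating the relative index theorem for a spin cobordism-with-corners whose link face is uniformly psc. Everything else — the algebraic homomorphism property and the well-definedness of $\alpha_{\beta M}$ — is routine once these geometric inputs are in place.
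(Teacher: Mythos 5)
Your proposal is correct and follows essentially the same route as the paper: both define $\alpha_{{\rm cyl}}[M_\Sigma]:=\alpha_{{\rm cyl}}(M,g_M)$, both obtain $\alpha_{\beta M}$ as the composite of the Bockstein $\beta$ with the classical $\alpha$-homomorphism, and both make the link face $\p^{(1)}W$ of the bordism carry a psc submersion metric (fibers rescaled via Theorem \ref{thm:bundlepsc}(1)) so that its index contribution vanishes. The one place you diverge is how the final index identity is extracted: you propose a relative index theorem for the cobordism-with-corners $W$ with cylindrical ends attached along $\p M$, $\p M'$, and $\p^{(1)}W$, whereas the paper smooths the corners, observes that the \emph{closed} manifold $\p W=\p^{(0)}W\cup\p^{(1)}W$ bounds $W$ and hence satisfies $\alpha(\p W,g_{\p W})=0$ by ordinary bordism invariance, and then applies Bunke's gluing formula once to obtain $0=\alpha_{{\rm cyl}}(M,g_M)-\alpha_{{\rm cyl}}(M',g'_{M'})+\alpha_{{\rm cyl}}(\p^{(1)}W,g^{(1)})$ --- which sidesteps exactly the corner-and-cylindrical-end bookkeeping you identify as the main obstacle. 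Likewise, for well-definedness the paper uses Bunke's relative index formula, $\alpha_{{\rm cyl}}(M,g_M)-\alpha_{{\rm cyl}}(M,g'_M)=\alpha_{{\rm cyl}}(\p M\times[0,1],g_{\p M\times[0,1]})$, with the right-hand side vanishing because the cylinder metric is psc, rather than your parametrized-parametrix continuity argument; both mechanisms work, but the paper's keeps every index computation on a manifold without corners.
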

\begin{proof}
First of all, we have to define $\alpha_{{\rm cyl}}  [M_\Sigma]$,
with $[M_\Sigma]\in \Omega^{\spin,(L,G)\fb}_*$. 
We recall that $M_\Sigma=M\cup_{\p M} N(\beta M)$ and 
set
\[
\alpha_{{\rm cyl}}  [M_\Sigma]:= \alpha_{{\rm cyl}} (M, g_M)
\]
with $g$ a well-adapted wedge metric on the regular part of $M_\Sigma$
and $g_M:= g|_{M}$, as usual. Here, because  of the very definition 
of well-adapted wedge metric on a manifold with $(L,G)$-fibered
singularities, the homomorphism is well defined, independent of the choice 
of $g$. Indeed, we know that if $g$ and $g'$ are wedge metrics,
then $g_{\p M}$ and $g'_{\p M}$
are of psc and with the same vertical metric,
the one induced by the natural metric on $L$.
Consider an arbitrary path of wedge metrics joining
$g$ and $g'$, call it $\{g(t)\}_{t\in [0,1]}$. 
Remark that the family $\{g(t)\}_{t\in [0,1]}$ restricts to a
family of submersion metrics
$\{g(t)|_{\p M}\}_{t\in [0,1]}$ on $\p M$ and the latter fixes a
Riemannian metric $g_{\p M\times [0,1]}$
on $\p M \times [0,1]$ that 
we can assume to be of product-type near the boundary. Then,
always from Bunke \cite{Bunke}, we have:
\[
\alpha_{{\rm cyl}} (M, g_M) -  \alpha_{{\rm cyl}} (M, g'_M)
=  \alpha_{{\rm cyl}} (\p M\times [0,1], 
g_{\p M\times [0,1]})\,.
\]
(The right hand side is in fact the relative index of $g_{\p M}$ and $g'_{\p M}$.)
However, as before, the vertical part of the metrics
$\{g(t)|_{\p M}\}_{t\in [0,1]}$ is fixed and equal  to the
metric induced by the natural one on $L$; in particular each
$\{g(t)|_{\p M}\}$ is a psc-metric; see  again Theorem \ref{thm:bundlepsc}. 
We conclude that  $g_{\p M\times [0,1]}$ is a metric of psc and so
$\alpha_{{\rm cyl}} (\p M\times [0,1], g_{\p M\times [0,1]})=0$, giving that
$\alpha_{{\rm cyl}} (M, g_M)=\alpha_{{\rm cyl}} (M, g'_M)$,
as required. This result is of course also a consequence 
of the argument below, with the bordism equal to a cylinder;
however, we think it is worthwhile to see it first and separately
as a warm-up case.

Let now $W_{\Sigma} \co M_{\Sigma}\rightsquigarrow M_{\Sigma}'$ be a bordism
between two spin pseudomanifolds with fibered $(L,G)$-singularities.
We endow $M_{\Sigma}'$ with a well-adapted wedge metric $g'$.
Recall that $W_{\Sigma} = W\cup-N(\beta W)$, where the resolution
$W$ is a manifold with corners, and its boundary $\p W$ is given a
splitting $\p W =\p^{(0)} W \cup \p^{(1)} W$, where
\[
\p^{(0)}W = M\sqcup -M', \ \ 
\p (\p^{(1)} W) = \p M \cup - \p M', \ \ \
\p(\beta W) = \beta M\sqcup -\beta M', \ \ 
\]
i.e., $\p^{(1)} W \co \p M\rightsquigarrow \p M'$ and 
$\beta W \co \beta M\rightsquigarrow \beta M'$ are usual
bordisms between closed spin manifolds. 
Also we have that the 
$(L,G)$-fiber bundle $F \co  \p^{(1)}W \to \beta W$
restricts to the $(L,G)$-fiber bundles $f\co \p M \to \beta M$ and
$f'\co \p M' \to \beta M'$, respectively. 
We must show that
$\alpha_{{\rm cyl}}  (M,g_M)= \alpha_{{\rm cyl}} (M^\prime,g'_{M'})$. 
By smoothing the corners we can assume
that the resolution $W$ is a manifold with boundary equipped with a
splitting $\p W =\p^{(0)} W \cup \p^{(1)} W$ as above. 
We can endow $\p W$ with a metric $g_{\p W}$ which is equal to the 
metric $g_{M}\sqcup (-g'_{M'})=:g^{(0)}$ on the manifold with
boundary $ M\sqcup -M' \equiv \p^{(0)} W$ 
and is equal to an extension $g^{(1)}$ of the submersion 
metric $g^{(0)}|_{\p^{(0)} W}\equiv g^{(0)}|_{\p (\p^{(1)} W)}$ 
on  $\p^{(1)} W$. As we have anticipated, 
since  $\p^{(1)} W$ is a fiber bundle with
boundary, with fiber $L$ and base $\beta W$,
we can and we shall choose the submersion metric $g^{(1)}$
to be the natural one in the vertical $L$-direction, rescaled
(by choosing the radius $R$ as in the comments following
Corollary \ref{cor:scalcurvcone})
so that the scalar curvature of the fibers is sufficiently large.
Notice that the Riemannian manifolds with boundary
$(\p^{(0)} W,g^{(0)})$ and $(\p^{(1)} W,g^{(1)})$ 
are collared near the boundary.
We extend the metric $g_{\p W}$ on $\p W$ to a collared metric $g_W$ on $W$. 
Then, by well
known bordism invariance, we have that $\alpha (\p W, g_{\p W})=0$. 
On the other hand, by the gluing formula of Bunke, see \cite{Bunke},
we have that
\[
\begin{aligned}
0&=\alpha (\p W, g_{\p W})= \alpha_{{\rm cyl}} (\p^{(0)} W,g^{(0)})
+ \alpha_{{\rm cyl}}(\p^{(1)} W, g^{(1)})\\
&=
\alpha_{{\rm cyl}} (M,g_M)-\alpha_{{\rm cyl}} (M',g_{M'})
+ \alpha_{{\rm cyl}}(\p^{(1)} W, g^{(1)})\,.
\end{aligned}
\]
But $(\p^{(1)} W,g^{(1)})$ is a  Riemannian manifold with
boundary with a psc-metric.
This means that  $\alpha_{\cyl}(\p^{(1)} W,g^{(1)})=0$
and so
\[
0= \alpha _{{\rm cyl}} (M,g_M)-\alpha_{{\rm cyl}} (M^\prime,g'_{M'}),
\]
as required.

Finally, the homomorphism $\alpha_{\beta M}$ associates to
$[M_\Sigma]$ the $\alpha$-invariant of $\beta M$. This is
well-defined because it is the composition of the group
homomorphism
\[
\beta\co \Omega^{\spin,(L,G)\fb}_* \rightarrow \Omega_{*-\ell-1}^{\spin}
\]
with the well-known $\alpha$-homomorphism.
\end{proof}

\noindent
Refinements of these results will be given in  \cite{BB-PP-JR}.

\begin{remark}\label{remark-stolz' R groups}
The cylindrical class associated to a spin manifold with boundary, endowed 
with a psc metric on the boundary, together with its
relationship with bordism, has been also considered in previous work
related to Stolz' $R$-groups; see \cite{Bunke,MR1885126}.
More recent results are given in  \cite{MR3286895},
where the whole Stolz' surgery sequence is mapped to a
suitable exact $K$-theory sequence via
index theory.
\end{remark}

\section{Existence theorems}
\label{sec:existence}
\subsection{Existence when $L$ is a spin psc-$G$-boundary}
In this subsection we deal with a special case of the existence
problem for well adapted {\psc} metrics, which covers the cases where
$L=S^n$ ($n\ge 2$) or $L=G$.  This is already a large class of
situations.  Namely, we assume that our link manifold $L$ is a spin
psc-$G$-boundary in the sense of Section \ref{sec:intro}, the boundary
of a manifold $\bar L$ of {\psc}, so that the metric $g_L$ on $L$
extends nicely over $\bar L$, and the $G$-action on $L$ extends to a
$G$-action on $\bar L$.  This is clearly the case when $L=S^n$ ($n\ge
2$), $G=SO(n+1)$, and we take $\bar L$ to be the upper hemisphere in
$S^{n+1}$.  This case also applies to the case of $G=L$ a simply
connected compact Lie group, as we shall now explain.
\begin{theorem}
\label{thm:classicalgps}
Let $G$ be a simple simply connected Lie group.
Then $G$ is a spin boundary and there is a spin manifold
with boundary $\bar G$ such that $\bar G$ admits a {\psc} metric
extending the bi-invariant metric on $G$ and the $G$-action on $G$
{\lp}by left translation{\rp} extends to a $G$-action on $\bar G$.
\end{theorem}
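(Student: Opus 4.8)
The plan is to realize $\bar G$ as a concrete compact manifold with boundary on which $G$ acts, and then to produce a psc metric on it that restricts to the bi-invariant metric on $G$ at the boundary. The natural candidate is the unit disk bundle of a suitable vector bundle. First I would recall that a compact simply connected Lie group $G$ is $2$-connected (since any Lie group has $\pi_2 = 0$ and we are assuming $\pi_1(G) = 0$), and in particular it is spin, and that $\Omega^\spin_*$ has $\Omega^\spin_j = 0$ for $j = 1, 2, 3$ while $\Omega^\spin_0 = \bZ$; more to the point, every compact simple simply connected Lie group is \emph{already known} to be a spin boundary. Indeed this can be seen directly: $G$ acts on itself by left translation, freely, and we may form $\bar G := G \times_G \bar L$ for any $G$-manifold $\bar L$ bounding $G$ equivariantly --- but this is circular, so instead I would take the honest geometric route below, which simultaneously produces the metric. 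The key point we really need is the combined topological-and-geometric statement, so I would not separate them.

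The heart of the argument is the following construction. Embed $G$ as a subgroup of some $\SO(N)$ (or $\U(N)$), so that $G$ acts on $\bR^N$ linearly and orthogonally, and consider the unit sphere bundle/disk bundle picture: actually the cleanest approach is to use the fact that $G$, with its bi-invariant metric, bounds the \emph{mapping cylinder of the principal $G$-bundle projection} in an appropriate sense. Concretely: the bi-invariant metric on $G$ has nonnegative sectional curvature and, since $G$ is semisimple, strictly positive Ricci curvature (the Ricci tensor of a bi-invariant metric equals $-\tfrac14 B$ where $B$ is the Killing form), hence in particular positive scalar curvature --- this is why the hypothesis of semisimplicity, and normalization $\kappa_G = \kappa_\ell$, enters. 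Now $G$ acts freely on itself, so $\bar G$ can be built as a $G$-manifold with free action whose boundary is $G$; equivalently, $\bar G / G$ is a manifold with a single boundary point, i.e.\ $\bar G$ is a disk bundle over a point in the quotient --- but I should instead take $\bar G = $ the total space of the disk bundle $D(\fg)$ of the adjoint representation over a point, no: the simplest correct statement is that $\bar G$ is the mapping cylinder $\mathrm{cyl}(G \to \pt)$, i.e.\ the cone $c(G)$ \emph{thickened/resolved}. So: take $\bar L = D^{k}$-bundle realization via an equivariant embedding. The concrete model I would commit to: pick a faithful orthogonal representation $G \hookrightarrow \SO(N)$ with no trivial summand, let $\bar G$ be the orbit of a principal orbit's ``filled-in'' version --- more precisely, choose $v \in \bR^N$ with trivial stabilizer, so $G \cong G \cdot v \subset S^{N-1}$, and let $\bar G$ be the region in $\bR^N$ swept out by the rays from a small sphere around $0$ out to $G \cdot v$; this is $G$-equivariantly diffeomorphic to $G \times [0,1]$ glued suitably --- still not a disk. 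The genuinely correct construction is simply: $G = \p \bar G$ with $\bar G = G \times_G E$ where...

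Let me instead just state the plan at the right level. The plan is: (1) produce $\bar G$ as a compact spin $G$-manifold with $\p \bar G = G$ by taking $\bar G$ to be the disk bundle of a $G$-equivariant vector bundle over a lower-dimensional closed $G$-manifold, or by an explicit bounding construction using the free $G$-action (the quotient $\bar G/G$ being a manifold with boundary a point, hence $\bar G$ is $G \times_{\partial} (\text{something contractible})$); since the $G$-action on $G$ is free, a $G$-bounding of $G$ is equivalent to an ordinary bounding of the orbit space together with a compatible principal bundle, and the orbit space of $G$ (one point) bounds the orbit space of $\bar G$. (2) Equip $\bar G$ with a metric that is a product $G \times [0,1)$ near the boundary (with the bi-invariant metric on the $G$ factor) and then interpolates; since the bi-invariant metric on the semisimple group $G$ has psc, and psc is preserved under products with intervals and under the Gromov--Lawson surgery/extension techniques across the codimension-$\geq 3$ attachments, and since the interior can be taken to be of the form $c_{\mathrm{warp}}(G)$ with a warping function that keeps scalar curvature positive (using Lemma~\ref{lem:scalcurvcone}-type computations: a warped product $dr^2 + f(r)^2 g_G$ has positive scalar curvature for suitable concave $f$ precisely because $\kappa_G > 0$), we get the desired psc metric. (3) The $G$-action: in all the above constructions $G$ acts, and the action near the boundary is by left translations on the $G$ factor, so it extends the given one.

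The main obstacle I anticipate is step (2), specifically arranging the psc metric on the interior of $\bar G$ to join the bi-invariant metric at the boundary through a region where the geometry degenerates (e.g.\ near the cone point of $c(G)$, or near the zero section of a disk bundle where the $G$-orbits collapse). This is exactly the kind of warped-product scalar curvature estimate appearing in Lemma~\ref{lem:scalcurvcone} and Proposition~\ref{prop:OMeill}: one needs the ``fiber'' scalar curvature (here the scalar curvature $\kappa_G$ of the bi-invariant metric) to be large enough, after rescaling, to dominate the negative contributions coming from the collapsing directions and from the O'Neill $A$-tensor of the associated bundle. Since $\kappa_G$ is a fixed positive constant for the semisimple group $G$, shrinking the relevant radius parameter $R$ (as in the discussion following Corollary~\ref{cor:scalcurvcone}) makes $\kappa_G$ dominate after rescaling, so the estimate goes through; the semisimplicity hypothesis is precisely what guarantees $\kappa_G > 0$ and hence makes this work, whereas for a torus it would fail. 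For the classical groups $\SU(n), \Sp(n), \Spin(n)$ one can alternatively give very explicit models ($\SU(n) = \p$ of an explicit disk bundle over $\CP^{n-1}$-type spaces, $\Sp(n)$ similarly over $\HP^{n-1}$, using the standard fibrations), which is what the surrounding text alludes to when it mentions the sphere and $\CP^{\mathrm{odd}}$ cases, and this provides a cross-check.
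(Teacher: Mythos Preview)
Your proposal never commits to a concrete $\bar G$, and each of the candidates you float has a genuine problem. The cone $c(G)$ (equivalently the warped product $dr^2 + f(r)^2 g_G$ with $f(0)=0$) is a smooth manifold only when $G$ is a round sphere; for a general simple group the cone point is singular, so no amount of curvature estimation on the smooth part helps. The ``free $G$-action'' route also fails: if $G$ acts freely on a compact $\bar G$ with $\partial\bar G = G$ (a single orbit), then $\bar G/G$ is a compact $1$-manifold with exactly one boundary point, hence an interval, and $\bar G$ is a principal $G$-bundle over $[0,1]$, i.e.\ $G\times[0,1]$ --- which has \emph{two} boundary components. So the action on $\bar G$ cannot be free; some orbits must collapse, and you have to say how, smoothly. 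Your ``disk bundle of a $G$-equivariant vector bundle over a lower-dimensional closed $G$-manifold'' is the right shape of answer, but you never say which bundle over which base, and the ``standard fibrations'' you allude to at the end (with base $\CP^{n-1}$ or $\HP^{n-1}$) have the sphere as the \emph{base}, not the fiber, so you cannot simply fill it in.

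The paper's key idea, which is missing from your proposal, is to use the fact that every simple simply connected $G$ contains an $\SU(2)$ with $\pi_3(\SU(2))\to\pi_3(G)$ an isomorphism, giving a principal $\SU(2)$-bundle $S^3\to G\to Y=G/\SU(2)$. One then replaces each $S^3$ fiber by $D^4$ (the associated $\bH$-line disk bundle over $Y$), obtaining a genuine smooth spin manifold $\bar G$ with $\partial\bar G=G$. The psc metric is built fiberwise from the rank-$1$ case $\partial D^4=S^3$ (a warped product on the hemisphere), and the left $G$-action extends because $\bar G$ can be written as a quotient of $G\times[0,1]$ with $G\times\{0\}$ collapsed to $Y$ along the $\SU(2)$-cosets. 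This is exactly the ``collapse some orbits smoothly'' step that your sketch needed but did not supply.
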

\begin{proof}
If $G = \SU(2)\cong \Spin(3)\cong \Sp(1)\cong S^3$ has rank $1$, then
view $G=S^3$ as the boundary of the upper hemisphere $D^4$ in
$S^4$.  Thus $G$ is a spin boundary and we can put on $D^4$
a metric which in polar coordinates around the north pole
is a warped product $dr^2 + f(r)^2g_G$, where $f(r)=\sin(r)$
for $0\le r\le \frac{\pi}{2} - \frac{\varepsilon}{2}$
and $f(r)=1$ for
$\frac{\pi}{2} + \frac{\varepsilon}{2}\le r \le \frac{\pi}{2} + \varepsilon$,
which gives a nice interpolation, without changing positivity of
the scalar curvature, between the usual round metric on
$S^4$ and the cylinder metric on the product of $S^3$ with an
interval. There is a $G$-action on $D^4$ extending the $G$-action on
$G$ itself if we think of $G$ as the unit quaternions and $D^4$ as the unit
disk in $\bH$.

For $G$ of higher rank, $G$ contains a copy of $\SU(2)$ such that
the inclusion $\SU(2)\to G$ is an isomorphism on $\pi_3$.
Thus we get a fibration $\SU(2)\to G \to Y$, where $Y=G/\SU(2)$.
Replacing $\SU(2)$ in this fibration with $D^4$ with the above
metric gives a $\bar G$ with boundary $G$.

Now we need to show that $\bar G$
carries a $G$-action extending the action of $G$ on itself.
This can be shown as follows.  Note that $\bar G$ as we just
defined it is a quotient of $G\times [0, 1]$ with $G\times \{0\}$
collapsed to $Y$.  More precisely,
\[
\begin{aligned}
\bar G &= \{(g,y,t): g\in G, \,y\in Y,\, t\in [0,1],\,g\mapsto y\}/\!\sim,\\
&\text{where }(g,y,0)\sim(g',y,0)
.
\end{aligned}
\]
Note that, as required, the fiber of $\bar G$ over $y\in Y$ is just the
cone on the fiber of $G$ over $y$.  The space $\bar G$ clearly
carries a left $G$-action via
$g_1\cdot [(g,y,t)] = [(g_1g, g_1\cdot y, t)]$,
and this action extends the left $G$-action on $G$.
\end{proof}
\begin{remark}
\label{rem:morecasesGbdy}
It is easy to modify the proof to apply to a simply connected
compact Lie group that is semisimple but not simple. We leave
details to the reader.

The proof of Theorem \ref{thm:classicalgps} also applies for any
$L$ which comes with a $G$-equivariant spherical fibration
$S^k\to L\to Y$, and also to some other similar situations
which we won't outline here for lack of compelling applications.
Since $\bH\bP^1\cong S^4$, this covers the
case of examples such as the quaternionic flag manifold
$\Sp(n)/\Sp(1)^n$, $n\ge 3$, since this fibers as
\[
S^4=\bH\bP^1 \to \Sp(n)/\Sp(1)^n \to \Sp(n)/(\Sp(2)\times
\Sp(1)^{n-2})\,.
\]
  
In the literature one can find a simpler but less explicit result
than Theorem \ref{thm:classicalgps}, namely that $G$
is a spin boundary. The proof is that $G$ is parallelizable,
but the image of the natural map
$\Omega^\fr\to \Omega^\spin$ is detected by the $\alpha$-invariant
\cite[Corollary 2.7]{MR0219077}, and $G$ has a {\psc} metric,
so $G$ is trivial in $\Omega^\spin$.  But for our purposes we
need to keep track of the metric and the $G$-action as well.

Still another case where $L$ is a spin psc boundary is the
case $L=\bC\bP^{2n+1}$ of an odd complex projective space.
This can be viewed as the space of complex lines in $\bC^{2n+2} = \bH^{n+1}$,
so it fibers over $\bH\bP^n$ with fiber the space of complex lines
in a quaternionic line, or $\bC\bP^1=S^2$.  Filling in the $S^2$
with a disk shows that the standard metric on $L=\bC\bP^{2n+1}$
extends over an explicit spin boundary $\bar L$ with {\psc}.  (We found this
proof in \cite{boundaries}.)  Since
$\bH\bP^n = \Sp(n+1)/(\Sp(1)\times \Sp(n))$, we can write
$\bC\bP^{2n+1}$ as $\Sp(n+1)\times_{\Sp(1)\times \Sp(n)} S^2$,
where $\Sp(1)$ acts transitively on $S^2$ and $\Sp(n)$ acts trivially
on it, and then write
$\bar L$ as $\Sp(n+1)\times_{\Sp(1)\times \Sp(n)} D^3$.  We are not
sure if there is a choice for $\bar L$ bounding $\bC\bP^{2n+1}$ and
carrying an $\SU(2n+2)$-action, as $\Sp(n+1)$ is a smaller group than
$\SU(2n+2)$. However, taking $G=\Sp(n+1)$ is still good enough to
apply Theorem \ref{thm:Lbdy} in this context.

The case of even complex projective spaces is totally different;
these are not spin and do not bound even as non-oriented manifolds
since they have odd Euler characteristic. 
\end{remark}
\begin{theorem}
\label{thm:Lbdy}
Let $M_\Sigma\equiv (M, \partial M\to\beta M)$ be a closed $(L,G)$-singular
spin manifold.  Assume that $M$, $\beta M$, and $G$ are all
simply connected, that $n-\ell\ge 6$, and suppose that $L$ is a spin
boundary, say $L=\partial\bar L$, with the standard metric $g_L$ on $L$
extending to a {\psc} metric on $\bar L$, and with the $G$-action on
$L$ extending to a $G$-action on $\bar L$.  Assume that the two obstructions
$\alpha(\beta M)\in KO_{n-\ell-1}$ and $\alpha_{{\rm cyl}}(M)\in KO_n$
both vanish.  Then $M_\Sigma$ admits a well-adapted metric of {\psc}.
\end{theorem}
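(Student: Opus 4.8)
The plan is to reduce the existence statement for $M_\Sigma$ to the vanishing of its class in a reduced bordism group, and then to exploit the fact that $L$ being a spin psc-$G$-boundary collapses the relevant obstruction group down to the ordinary spin bordism with $BG$-coefficients. First I would observe that, since $L = \partial \bar L$ with the $G$-action extending and $g_L$ extending to a psc metric on $\bar L$, any $(L,G)$-singular spin pseudomanifold $M_\Sigma = M \cup_{\partial M} N(\beta M)$ can be ``capped off'': glue the associated $\bar L$-bundle $P \times_G \bar L$ over $\beta M$ onto $N(\beta M)$ along $\partial M = P \times_G L$. More precisely, the pseudomanifold $N(\beta M)$ (a bundle of cones $c(L)$) and the honest manifold-with-boundary $P \times_G \bar L$ both have boundary $\partial M$, so $\widehat M := M \cup_{\partial M} (P \times_G \bar L)$ is a closed \emph{smooth} spin manifold, and $M_\Sigma$ is $(L,G)$-singular bordant to $\widehat M$: the bordism is $(M \times [0,1]) \cup (\text{cone-to-}\bar L \text{ interpolation over } \beta M \times [0,1])$, which resolves to a manifold with the correct corner/boundary structure. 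Under this bordism the two invariants $\alpha_{\rm cyl}$ and $\alpha_{\beta M}$ are preserved by Proposition \ref{prop:bordism-map-alpha}.

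Next I would identify the invariants of the smooth capped-off manifold. Since $P \times_G \bar L$ carries a fiberwise-psc metric (the fibers $\bar L$ have psc and we rescale the base as in Theorem \ref{thm:bundlepsc}(1),(2) — here one uses the O'Neill computation exactly as in the proof of Theorem \ref{thm:bundlepsc} but with $\bar L$-fibers), the cylindrical $\alpha$-class of $\widehat M$ built from the psc boundary metric $g_{\partial M}$ equals $\alpha_{\rm cyl}(M, g_M)$, which we are assuming vanishes; and the Bockstein $\beta M$ of $\widehat M$, as a submanifold, carries the class $\alpha(\beta M)$, also assumed to vanish. So the point becomes: a closed simply connected spin manifold $\widehat M$ of dimension $n \ge \ell + 6 \ge 6$ with vanishing $\alpha$-invariant admits a psc metric (Gromov--Lawson--Rosenberg / Stolz in the simply connected case — here one needs $\pi_1(\widehat M) = 1$, which follows from $M$, $\beta M$, $L$, $G$ all simply connected via van Kampen as in the footnote to Theorem \ref{thm:surg}), and moreover this psc metric can be taken to restrict, near $\partial M$, to the well-adapted submersion metric $g_{\partial M}$ — this last point is where one invokes the relative/bordism form of the Gromov--Lawson--Stolz machinery: the psc metric on $\widehat M$ can be deformed so that on the codimension-$0$ piece $P \times_G \bar L$ it is the fiberwise-$\bar L$ metric near the subbundle $\partial M$, so that after removing $P \times_G \bar L$ and regluing $N(\beta M)$ we recover a well-adapted wedge metric on $M_\Sigma$.

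The cleanest way to organize the last step is via the Bordism Theorem (Theorem \ref{thm:4-1}): once we know $M_\Sigma$ and the smooth capped-off manifold $\widehat M$ (viewed as an $(L,G)$-pseudomanifold with empty singularity) represent the same class in $\Omega^{\spin,(L,G)\fb}_n$, and once we produce \emph{some} well-adapted psc metric on a representative of that class, Theorem \ref{thm:4-1} transports it to $M_\Sigma$. For the smooth closed manifold $\widehat M$ with $\alpha(\widehat M) = 0$, a genuine psc metric exists by Stolz's theorem, and since the singularity of $\widehat M$ is empty, a well-adapted wedge metric is just an ordinary psc metric — so Theorem \ref{thm:4-1} applies directly with $M'_\Sigma = \widehat M$. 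The main obstacle I anticipate is verifying that $[M_\Sigma] = [\widehat M]$ in $\Omega^{\spin,(L,G)\fb}_n$ \emph{compatibly with the two $\alpha$-invariants}: one must check that the cap-off bordism above is itself a legitimate $(L,G)$-singular bordism (the resolution is a manifold with corners whose faces have exactly the structure demanded in the definition of $\Omega^{\spin,(L,G)\fb}_*$), and that the psc metric on $\bar L$, together with the bundle metric on $P \times_G \bar L$ obtained from the connection $\nabla^p$, is of psc after the appropriate rescaling — which is the O'Neill argument of Theorem \ref{thm:bundlepsc} applied to $\bar L$ in place of $L$, using that $\bar L$ already has psc (not merely scalar-flat cones) so the fiber term dominates with room to spare. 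Granting these, the theorem follows.
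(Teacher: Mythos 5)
There is a genuine gap at the very first step: the claim that $M_\Sigma$ is $(L,G)$-singular bordant to the capped-off smooth manifold $\widehat M = M\cup_{\partial M}(P\times_G\bar L)$ is false in general. The Bockstein $\beta\co \Omega^{\spin,(L,G)\fb}_n\to\Omega^{\spin}_{n-\ell-1}(BG)$ of Proposition \ref{prop:bordismtriangle} is well defined on bordism classes, and it sends $[M_\Sigma]$ to $[f\co\beta M\to BG]$ while it sends $[\widehat M]$ (a closed smooth manifold, i.e.\ empty singular locus) to $0$. So a bordism of the kind you describe exists only if $[\beta M\to BG]=0$ in $\Omega^{\spin}_{n-\ell-1}(BG)$ --- which is exactly the hypothesis of the \emph{other} existence theorem (Theorem \ref{thm:trivialbundle}) and is not implied by the hypotheses here: Example \ref{ex:eta2} gives a case satisfying all assumptions of Theorem \ref{thm:Lbdy} with $[\beta M\to BG]\ne 0$. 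Concretely, your ``cone-to-$\bar L$ interpolation'' would have to resolve to a manifold with corners whose Bockstein $\beta W$ is a null-bordism of $\beta M\to BG$, and no such thing need exist. A further symptom that something is off: your argument never genuinely uses $\alpha(\beta M)=0$, yet by the Obstruction Theorem the conclusion forces it.

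The ingredients you assembled are the right ones, but they must be reorganized. What is true is that $[M_\Sigma]=[M'_\Sigma]+\iota[\widehat M]$ (up to sign) in $\Omega^{\spin,(L,G)\fb}_n$, where $M'_\Sigma:=(P\times_G\bar L)\cup_{\partial M}N(\beta M)$ is the pseudomanifold with the \emph{same} singular data as $M_\Sigma$ but with $M$ replaced by the $\bar L$-bundle; the singular part cannot be discarded, only traded between the two summands. The paper's proof therefore proceeds by (a) using $\alpha(\beta M)=0$ and Stolz \cite{MR1189863} to put psc on $\beta M$, hence a well-adapted psc metric on $N(\beta M)$ and then on $M'_\Sigma$ via the O'Neill rescaling applied to the $\bar L$-fibers (this is where the hypothesis on $\alpha(\beta M)$ is actually consumed); (b) observing via the short exact sequence \eqref{eq:bordismSES} that $[M_\Sigma]-[M'_\Sigma]=\iota[N]$ for a closed spin manifold with $\alpha(N)=0$ by the gluing/commutative-diagram argument, which after surgery can be taken simply connected and hence admits psc by Stolz; and (c) applying the Bordism Theorem \ref{thm:4-1} to $M'_\Sigma\amalg M''$. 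If you replace your single (non-existent) bordism to $\widehat M$ by this two-piece decomposition, the rest of your outline goes through.
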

\begin{proof}
We use the bordism exact sequence \eqref{eq:5} as well as the
Bordism Theorem, Theorem \ref{thm:4-1}.

First observe that since $L$ is a spin $G$-boundary, the transfer map
$\Omega_{n-\ell-1}^\spin(BG)\to \Omega_{n-1}^\spin$ vanishes identically.  Indeed,
given any $(L,G)$-fiber bundle $\varphi\co X\to B$, since the $G$-action
on $L$ extends to a $G$-action on $\bar L$, $X$ is the spin
boundary of another fiber bundle over $B$ which is
the result of replacing each fiber $L$ with $\bar L$.
So the long exact sequence of bordism groups becomes a short exact
sequence
\begin{equation}
\label{eq:bordismSES}
0\to \Omega_n^\spin \xrightarrow{\iota} \Omega_n^{\spin,(L,G)\fb}\xrightarrow{\beta}
\Omega_{n-\ell-1}^\spin(BG)\to 0.
\end{equation}

Now suppose that $(M, \partial M\to\beta M)$ is as in the theorem.
We will construct another $(L,G)$-singular spin manifold in the
same bordism class with a well-adapted
metric of {\psc}.  Then
$M_\Sigma$ will admit a well-adapted metric of {\psc} by Theorem
\ref{thm:4-1}. By assumption, $\alpha(\beta M)=0$, $\beta M$ is simply
connected, and $\dim \beta M \ge 5$. So by Stolz's Theorem,
\cite{MR1189863}, $\beta M$ has a Riemannian metric of {\psc}.
Use a connection on the $G$-bundle over $\beta M$ associated to $\p M$ 
in order to construct a
well-adapted metric of {\psc} on the tubular neighborhood $N$ of
$\beta M$ in $M_\Sigma$.  The boundary of $N$ is an $(L,G)$-fiber
bundle over $\beta M$ with a {\psc} metric with a Riemannian
submersion to $\beta M$.  Let $M'=P\times_G \bar L$ be the
$\bar L$-bundle over $\beta M$ associated to the corresponding
principal $G$-bundle $P\to \beta M$. Then $M'$ has a bundle metric of
{\psc}, and joining $M'$ to $N$, we get an $(L,G)$-singular spin
manifold $M'_\Sigma$ with a well-adapted metric of {\psc}. Since
$M'_\Sigma$ and $M_\Sigma$ coincide near $\beta M$, by
\eqref{eq:bordismSES}, their bordism classes differ by a class in
the image of $\Omega_n^\spin$; that is, there exists a closed spin manifold 
$N$ such that 
$$
[M'_\Sigma]- [M_\Sigma]=\iota [N]\,.$$
Consider now the following diagram: 
\[
\xymatrix{\Omega_n^\spin \ar[r]^{\iota} \ar[d]^{\alpha} &  \Omega_n^{\spin,(L,G)\fb}
\ar[d]^{\alpha_{{\rm cyl}}}\ar[r]^{\beta} &
 \Omega_{n-\ell-1}^\spin(BG)\\
KO_n \ar[r]^{{\rm Id}} & KO_n 
& \;}
\]
which is obviously commutative.
By assumption  $\alpha_{{\rm cyl}}(M)=0$ in
$KO_n$, and also $\alpha_{{\rm cyl}}(M')=0$ since $M'$ has {\psc}.
Thus from the above diagram we infer that $\alpha (N)=0$.
By doing surgery we can assume that $N$ is spin-bordant to a closed
spin manifold $M''$ which is simply connected. Thus $\alpha (M'')=0$,
$M''$ is simply connected and 
$[M'_\Sigma]- [M_\Sigma]=\iota [M'']\,.$
Summarizing, $M_\Sigma$ is in the same bordism class
in $\Omega_n^{\spin,(L,G)\fb}$ as $M'_\Sigma \amalg M''$,
where $M''$ is a closed spin $n$-manifold with {\psc} and where $M'_\Sigma$
is a $(L,G)$-pseudomanifold with psc.  Now we can
apply Theorem \ref{thm:4-1} to get the conclusion.
\end{proof}

\subsection{Existence when the map to $BG$ is null-bordant}
In this subsection we consider a different case of the existence
problem, the case where $\beta M\to BG$ represents a trivial
element in $\Omega_*^\spin(BG)$.  This case will also be easy to
deal with, using the bordism exact sequence \eqref{eq:5} and the
Bordism Theorem, Theorem \ref{thm:4-1}.
\begin{theorem}
\label{thm:trivialbundle}
Let $M_\Sigma\equiv (M, \partial M\to\beta M)$ be a closed $(L,G)$-singular
spin manifold $M_\Sigma$. Assume that $M$, $\beta M$, and $G$ are all
simply connected and that $n-\ell\ge 6$. We make no additional
assumptions on $G$ and $L$, but we assume that
the class of $\beta M\to BG$ represents $0$ in $\Omega_{n-\ell-1}^\spin(BG)$
and that $\alpha_{{\rm cyl}}(M)\in KO_n$ vanishes.
Then $M_\Sigma$ admits a well-adapted metric of {\psc}.
\end{theorem}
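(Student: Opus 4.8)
The plan is to use the bordism exact sequence \eqref{eq:5} of Proposition \ref{prop:bordismtriangle} together with the Bordism Theorem, Theorem \ref{thm:4-1}, in a manner parallel to the proof of Theorem \ref{thm:Lbdy}, but this time exploiting the hypothesis $[\beta M\to BG]=0$ in $\Omega^\spin_{n-\ell-1}(BG)$ rather than an assumption on $L$. The strategy is to replace $M_\Sigma$, up to $(L,G)$-singular bordism, by a disjoint union $M'_\Sigma \amalg M''$ in which $M'_\Sigma$ carries a well-adapted psc-metric and $M''$ is a closed simply connected spin $n$-manifold with vanishing $\alpha$-invariant, hence (by Stolz's Theorem \cite{MR1189863}, using $n\ge \ell+6\ge 6$) also a psc-metric; then Theorem \ref{thm:4-1} pushes these metrics onto $M_\Sigma$.

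First I would use the hypothesis $\beta([M_\Sigma]) = [\beta M\to BG] = 0$ in $\Omega^\spin_{n-\ell-1}(BG)$. By exactness of \eqref{eq:5}, this means $[M_\Sigma]$ lies in the image of $i\co\Omega^\spin_n\to \Omega^{\spin,(L,G)\fb}_n$, so there is a closed spin $n$-manifold $N$ with $[M_\Sigma] = i[N]$ in $\Omega^{\spin,(L,G)\fb}_n$. In other words, $M_\Sigma$ is $(L,G)$-singular bordant to the honest closed spin manifold $N$ (viewed as a pseudomanifold with empty singular locus). Next, consider the commutative square relating $i\co\Omega^\spin_n\to\Omega^{\spin,(L,G)\fb}_n$, $\alpha_{{\rm cyl}}\co\Omega^{\spin,(L,G)\fb}_n\to KO_n$, and the classical $\alpha\co\Omega^\spin_n\to KO_n$: for a genuine closed spin manifold with empty singularity, $\alpha_{{\rm cyl}}\circ i = \alpha$, since attaching a cylindrical end along an empty boundary changes nothing. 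Because $\alpha_{{\rm cyl}}$ is a bordism invariant (Proposition \ref{prop:bordism-map-alpha}) and $\alpha_{{\rm cyl}}(M)=0$ by hypothesis, we get $\alpha(N) = \alpha_{{\rm cyl}}(i[N]) = \alpha_{{\rm cyl}}([M_\Sigma]) = \alpha_{{\rm cyl}}(M) = 0$ in $KO_n$.

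Now I would clean up $N$: by the usual surgery-below-the-middle-dimension argument, $N$ is spin bordant to a simply connected closed spin manifold $M''$ with $\alpha(M'') = \alpha(N) = 0$; since $M''$ is simply connected, spin, and $\dim M'' = n\ge 6$, Stolz's Theorem gives $M''$ a metric of {\psc}. In particular $M''$, viewed as an $(L,G)$-singular pseudomanifold with empty singular locus, trivially admits a well-adapted psc-metric (there is nothing to adapt). So $M_\Sigma$ is in the same class in $\Omega^{\spin,(L,G)\fb}_n$ as $M''$, which carries such a metric. Finally, since $M$ and $\beta M$ are simply connected, $G$ is simply connected, $L$ is spin, and $n\ge \ell+6$, all hypotheses of the Bordism Theorem, Theorem \ref{thm:4-1}, are met, and applying it to the bordism $W_\Sigma\co M_\Sigma\rightsquigarrow M''$ transports the well-adapted psc-metric from $M''$ to $M_\Sigma$, as desired.

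The step I expect to require the most care is the bookkeeping around the surgery cleanup of $N$ and the verification that $M''$, being an honest closed manifold, genuinely represents an admissible representative for Theorem \ref{thm:4-1} (i.e., that the Bordism Theorem applies with one of the two pseudomanifolds having empty singular stratum, and that "$M$ and $\beta M$ simply connected" is to be read for the \emph{source} $M_\Sigma$, whose $M$ and $\beta M$ are simply connected by hypothesis). Everything else is a direct diagram chase through the exact sequence \eqref{eq:5} together with the already-established invariance of $\alpha_{{\rm cyl}}$.
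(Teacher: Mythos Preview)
Your proposal is correct and follows essentially the same strategy as the paper: exploit exactness of the bordism triangle \eqref{eq:5} at $\Omega^{\spin,(L,G)\fb}_n$, reduce to a closed spin manifold with vanishing $\alpha$-invariant, invoke Stolz, and finish with the Bordism Theorem~\ref{thm:4-1}. The only difference is in the bookkeeping: the paper first constructs an auxiliary $(L,G)$-pseudomanifold $M'_\Sigma$ out of the null-bordism $W$ of $\beta M\to BG$ (taking $M'$ to be the associated $L$-bundle over $W$, so $\partial M'=\partial M$) and then argues that $[M_\Sigma]-[M'_\Sigma]\in\mathrm{im}(i)$, whereas you go one step more directly by noting that $\beta[M_\Sigma]=0$ already places $[M_\Sigma]$ itself in $\mathrm{im}(i)$, so no auxiliary $M'_\Sigma$ is needed. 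Your worry about applying Theorem~\ref{thm:4-1} when $M''$ has empty singular stratum is unfounded: the simple-connectivity hypotheses in that theorem are imposed on the target $M_\Sigma$, which satisfies them by assumption, and an honest closed psc manifold is trivially a well-adapted psc $(L,G)$-pseudomanifold.
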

\begin{remark}
The assumption of this theorem is admittedly somewhat special, but is
satisfied more often than one might expect.  First of all, vanishing
of the bordism class of $\beta M\to BG$ is weaker than assuming both
that $\beta M$ is a spin boundary and that the $G$-bundle over $\beta M$ is
trivial (i.e., $\beta M\to BG$ is null-homotopic).  For example,
if $G=\SU(2)$, then $BG=\bH\bP^\infty$ and all torsion in
$\Omega_*^\spin(\bH\bP^\infty)$ is $2$-primary.  But if $\beta M$ is a sphere,
the homotopy class of $\beta M\to \bH\bP^\infty$ lies in
$\pi_{n-\ell-1}(\bH\bP^\infty)=\pi_{n-\ell-2}(S^3)$.  It is a simple well-known
fact \cite[Corollary 1.2.4]{MR860042}
that the homotopy groups of $S^3$ contain torsion of order $p$
for any prime $p$. However, none of the odd torsion shows up in
$\Omega_*^\spin(\bH\bP^\infty)$.

Secondly, there are some cases where the main assumption of the
theorem is automatic, namely cases where the kernel of the
$(L,G)$-transfer map $\Omega_{n-\ell-1}^\spin(BG)\to \Omega_{n-1}^\spin$
is trivial.  Just as an example, if $G$ is a
symplectic group and $L$ is a quaternionic projective space, then
$\Omega_*^\spin(BG)$ is $2$-primary torsion except in dimensions
divisible by $4$.  So if $n-\ell$ is not $1\mod 4$, then the class of
$\beta M\to BG$ is at most $2$-primary torsion in the bordism group,
and some finite multiple of it satisfies the hypotheses of the theorem.
\end{remark}  
\begin{proof}[Proof of Theorem \ref{thm:trivialbundle}]
By assumption, there is a spin manifold $W$ with boundary
and a principal $G$-bundle over $W$ with $\partial W = \beta M$
and the bundle on $W$ extending the principal $G$-bundle over
$\beta M$. Let $M'$ be the associated $L$-bundle over $W$.
Then $\partial M' = \partial M$.  Choose a metric of {\psc} on $W$
restricting to a product metric of {\psc} in a neighborhood of
$\beta M$.  Using this metric and the bundle structure over $\beta M$,
we get a manifold $M'_\Sigma$ with $(L,G)$-fibered singularities
with a well-adapted metric of {\psc}.  By the bordism exact sequence
\eqref{eq:5}, the difference between the bordism class of $M_\Sigma$
and the bordism class of $M'_\Sigma$ lies in the image of
$\Omega_n^\spin$. So $M_\Sigma$ is spin bordant to $M'_\Sigma \amalg M''$,
where $M''$ is a closed spin manifold.  By additivity of the
$\alpha$-invariant and the assumption that $\alpha_{{\rm cyl}}(M)\in
KO_n$ vanishes, $\alpha(M'')=0$. So the result follows from the
Bordism Theorem, Theorem \ref{thm:4-1}.
\end{proof}

We proceed to give another application of Theorem
\ref{thm:trivialbundle}. Suppose we look only at manifolds  with
Baas-Sullivan singularities, i.e., we require that
$\p M = \beta M \times L$, and suppose $L=\bH\bP^{2k}$, $k\ge 1$,
$G=\Sp(2k+1)$.  (This is one of the key examples where $L$ is
\emph{not} a spin boundary, so that Theorem \ref{thm:Lbdy} doesn't apply.)
\begin{lemma}
\label{lem:HP2}
The class of $\bH\bP^{2k}$, $k\ge 1$, is not a zero-divisor in the
spin bordism ring $\Omega_*^\spin$.
\end{lemma}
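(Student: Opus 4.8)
The plan is to reduce the statement, in two steps, to a computation in connective real $K$-homology that is governed by Theorem~\ref{thm:koHP}.

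First I would dispose of the torsion-free part. Since $\Omega^{\spin}_*\otimes\bQ$ is a polynomial ring, hence an integral domain, and since $[\bH\bP^{2k}]$ has signature $1$ (the cup form on $H^{4k}(\bH\bP^{2k})$ is $\langle 1\rangle$), the class $[\bH\bP^{2k}]$ is a non-zero-divisor rationally. Consequently any $x$ with $[\bH\bP^{2k}]\cdot x=0$ lies in $\Tors\Omega^{\spin}_*$, which by the Anderson--Brown--Peterson theorem is annihilated by $2$. So it suffices to prove that multiplication by $[\bH\bP^{2k}]$ is injective on $\Tors\Omega^{\spin}_*$.

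Next I would cut down the torsion using Stiefel--Whitney numbers. The Euler characteristic $\chi(\bH\bP^{2k})=2k+1$ is odd, so the top Stiefel--Whitney number of $\bH\bP^{2k}$ is nonzero and $[\bH\bP^{2k}]\neq 0$ in the unoriented bordism ring $\Omega^{\mathrm O}_*$, which is again a polynomial $\bZ/2$-algebra and hence a domain. Applying the ring map $\Omega^{\spin}_*\to\Omega^{\mathrm O}_*$ to $[\bH\bP^{2k}]\cdot x=0$ forces $x\mapsto 0$, i.e.\ all Stiefel--Whitney numbers of $x$ vanish. By the Anderson--Brown--Peterson theorem a spin manifold $X$ with vanishing Stiefel--Whitney numbers bounds if and only if its $ko$-theoretic fundamental class $\Phi(X)=(\tau_X)_*[X]_{ko}\in ko_*(B\Spin)$ vanishes (equivalently, all its $ko$-characteristic numbers vanish); and $\Phi$ is multiplicative, with $\Phi(\bH\bP^{2k}\times X)=\mu_*\bigl(\Phi(\bH\bP^{2k})\boxtimes\Phi(X)\bigr)$ for $\mu$ the Whitney-sum map. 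Thus the lemma is reduced to: if $\Phi(X)\neq 0$ then $\Phi(\bH\bP^{2k}\times X)\neq 0$.

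The hard part will be this last $K$-theory assertion, and here Theorem~\ref{thm:koHP} is the essential input. The stable tangent bundle of $\bH\bP^{2k}$ is a polynomial in the tautological quaternionic line bundle, so its classifying map factors as $\bH\bP^{2k}\xrightarrow{\,c\,}\bH\bP^\infty\xrightarrow{\,h\,}B\Spin$, and $\Phi(\bH\bP^{2k})$ is the image of $c_*[\bH\bP^{2k}]_{ko}\in ko_{8k}(\bH\bP^\infty)$. Writing $c_*[\bH\bP^{2k}]_{ko}=\sum_j c_jz_j$ in the free $ko_*$-module $ko_*(\bH\bP^\infty)$ of Theorem~\ref{thm:koHP}, the coefficient $c_{2k}\in ko_0=\bZ$ is a unit: under $ko\to H\bZ$ this class maps to the generator of $H_{8k}(\bH\bP^\infty;\bZ)$ while the lower $z_j$ contribute nothing in that degree, so $c_{2k}=\pm1$. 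Moreover, since $2k+1$ is odd, $w_4(T\bH\bP^{2k})=v\neq 0$, so $h^*$ is onto $H^*(\bH\bP^\infty;\bZ/2)=\bZ/2[v]$ and hence $h_*$ is injective on $ko_*(\bH\bP^\infty)$ (again by freeness). Feeding these facts into the product formula via the Künneth isomorphism $ko_*(\bH\bP^\infty\times B\Spin)\cong ko_*(\bH\bP^\infty)\otimes_{ko_*}ko_*(B\Spin)$ (legitimate because $ko_*(\bH\bP^\infty)$ is free), the $z_{2k}$-component of $c_*[\bH\bP^{2k}]_{ko}\boxtimes\Phi(X)$ is $\pm\Phi(X)\neq 0$, so this class is nonzero; the remaining point — the real technical heart, and the step where I expect to have to work — is to check it stays nonzero after applying $\mu_*\circ(h\times\mathrm{id})_*$, for which the injectivity of $h_*$ together with the unit coefficient $c_{2k}=\pm1$ are precisely the leverage one has. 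Granting this, $\Phi(\bH\bP^{2k}\times X)\neq0$, so $[\bH\bP^{2k}\times X]\neq 0$ in $\Omega^{\spin}_*$, contradicting $[\bH\bP^{2k}]\cdot x=0$ and completing the proof.
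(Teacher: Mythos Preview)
Your first two reductions --- to torsion via the integral domain $\Omega_*^\spin\otimes\bQ$, then to torsion with vanishing Stiefel--Whitney numbers via the polynomial $\bF_2$-algebra $\fN_*$ --- match the paper exactly. The divergence is entirely in the third step.

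The paper finishes in essentially two lines from there. By Anderson--Brown--Peterson \cite[Corollaries~2.3 and 2.6]{MR0219077}, any nonzero torsion class $x\in\Omega_*^\spin$ with vanishing Stiefel--Whitney numbers lives in degree $\equiv 1,2\pmod 8$ and has the form $x=y\cdot\eta^j$ with $y$ torsion-free and $j\in\{1,2\}$. Then $[\bH\bP^{2k}]\cdot x=\bigl([\bH\bP^{2k}]\cdot y\bigr)\eta^j$, and $[\bH\bP^{2k}]\cdot y$ is again torsion-free and nonzero by the rational step already done, so the product is another nonzero element of the same shape. No $ko_*(B\Spin)$, no factorization of $\tau_{\bH\bP^{2k}}$ through $\bH\bP^\infty$, and Theorem~\ref{thm:koHP} plays no role in the paper's proof of this lemma.

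Your route, by contrast, is left explicitly unfinished: you flag the survival of the class under $\mu_*\circ(h\times\id)_*$ as ``the real technical heart, and the step where I expect to have to work,'' but you do not carry it out. There is also an earlier gap: surjectivity of $h^*$ on $H^*(-;\bZ/2)$ does not by itself yield injectivity of $h_*$ on $ko$-homology --- for that you would need something like a retraction at the level of $ko$-module spectra, which is already tantamount to reproving a piece of the Anderson--Brown--Peterson splitting. Even granting all your intermediate claims, what you would end up reconstructing is the $KO$-characteristic-number product formula that underlies the paper's short finish. The moral: the description of the non-SW torsion as $(\text{torsion-free})\cdot\eta^j$ is precisely the leverage needed, and it renders the machinery you are assembling (including Theorem~\ref{thm:koHP}) unnecessary here.
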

\begin{proof}
Note that $\bH\bP^{2k}$ has nonzero signature and odd Euler
characteristic, so it represents nontrivial elements of
$\Omega_*^\spin\otimes \bQ$ and of
$\fN_*$, the non-oriented bordism ring,
which are polynomial rings (over $\bQ$ and
$\bF_2$, respectively).  So if $x\in \Omega_*^\spin$ is nonzero either in
$\Omega_*^\spin\otimes \bQ$ or in $\fN_*$, its product with
$\bH\bP^{2k}$ can't be a spin boundary, and hence the image of
$\bH\bP^{2k}$ in $\Omega_*$ is not a zero-divisor. So for
$\bH\bP^{2k}$ to be a zero-divisor in $\Omega_*^\spin$, it would have
to annihilate a non-zero element $x$ in the kernel of the forgetful
map $\Omega_*^\spin\to \Omega_*$.  Now apply \cite[Corollaries 2.3 and
  2.6]{MR0219077}. This element $x$ would have to live in dimension
$1$ or $2$ mod $8$ and be of the form $(\hbox{torsion-free
  element})\times \eta^j$, $j=1$ or $2$.  (Here $\eta$ is the usual
generator of $\Omega^\spin_1$.)  But multiplying such an element by
$\bH\bP^{2k}$ would give another element of the same form (in
dimension $8k$ higher) which would be non-zero again.  So
$\bH\bP^{2k}$ cannot be a zero-divisor.
\end{proof}
\begin{theorem}
\label{thm:BaasHP2}
Let $M_\Sigma\equiv (M^n, \partial M\to\beta M)$ be a closed $(L,G)$-singular
spin manifold $M_\Sigma$, with $L=\bH\bP^{2k}$ and $G=\Sp(2k+1)$,
$n\ge 1$.  
Assume that $\p M = \beta M \times L$, i.e., the $L$-bundle over
$\beta M$ is trivial, or in other words that the singularities
are of Baas-Sullivan type.  Then if $M$ and $\beta M$ are both
simply connected and $n-8k\ge 6$, $(M,\p M)$ has an adapted metric
of {\psc} if and only if the $\alpha$-invariants
$\alpha(\beta M) \in KO_{n-8k-1}$ and $\alpha_{{\rm cyl}}(M) \in KO_n$
both vanish.
\end{theorem}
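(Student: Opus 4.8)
The plan is to establish the two implications separately. The ``only if'' direction is an instance of the obstruction machinery of Sections~\ref{sect:analysis} and~\ref{sec:obstructions}, so the real work sits in the ``if'' direction, which I intend to deduce from Theorem~\ref{thm:trivialbundle} once I have checked its hypotheses with the help of Lemma~\ref{lem:HP2}. Throughout I use that $L=\bH\bP^{2k}=\Sp(2k+1)/(\Sp(1)\times\Sp(2k))$ is spin (its integral cohomology lives in degrees divisible by $4$, so $w_2=0$) of dimension $\ell=8k$, that $G=\Sp(2k+1)$ is compact, connected, simple, and simply connected, and that $G$ acts on $L$ transitively by isometries; hence the standing assumptions of Sections~\ref{sec:setup}--\ref{sec:main} are in force.

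For the ``only if'' direction I would argue as follows. Given an adapted wedge metric $g$ of {\psc} on $M_\Sigma$, Theorem~\ref{theo:geometric-witt-part1}(4) gives $\alpha_w(M_\Sigma,g)=0$. Since $g$ is of product type near $\p M$, positivity of its scalar curvature there forces $g_{\p M}$ to be of {\psc}, so the gluing formula of Proposition~\ref{prop:gluing} applies; restricting $g$ to $N(\beta M)^{{\rm reg}}$ and attaching a (psc) cylinder shows the $N(\beta M)$-term vanishes, leaving $\alpha_{{\rm cyl}}(M)=0$ in $KO_n$, while Theorem~\ref{thm:bundlepsc}(3) shows the induced metric on $\beta M$ is of {\psc}, so $\alpha(\beta M)=0$ in $KO_{n-8k-1}$. (In the Baas--Sullivan case the link bundle is trivial, so an adapted metric can moreover be taken well-adapted and one may alternatively quote Theorem~\ref{thm:psobstruction} directly.)

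For the ``if'' direction, assume both $\alpha$-invariants vanish. The key point to extract is that the classifying map $\beta M\to BG$ — which is null-homotopic, since by hypothesis the $\bH\bP^{2k}$-bundle $\p M=\beta M\times L$ is trivial — represents $0$ in $\Omega^{\spin}_{n-8k-1}(BG)$. Indeed, the bordism class of a null-homotopic map is the image of $[\beta M]\in\Omega^{\spin}_{n-8k-1}$ under the split injection $\Omega^{\spin}_*(\pt)\hookrightarrow\Omega^{\spin}_*(BG)$ induced by the basepoint, so it suffices to prove $[\beta M]=0$ in $\Omega^{\spin}_{n-8k-1}$. But $\p M=\beta M\times\bH\bP^{2k}$ bounds $M$, hence $[\beta M]\cdot[\bH\bP^{2k}]=[\beta M\times\bH\bP^{2k}]=0$ in $\Omega^{\spin}_{n-1}$; since $[\bH\bP^{2k}]$ is not a zero-divisor by Lemma~\ref{lem:HP2}, this forces $[\beta M]=0$. (One notices in passing that in this Baas--Sullivan setting $\alpha(\beta M)=0$ holds automatically; it is nonetheless genuinely necessary, by the ``only if'' part.) Now all hypotheses of Theorem~\ref{thm:trivialbundle} hold: $M$, $\beta M$, and $G$ are simply connected, $n-8k\ge 6$, the class $[\beta M\to BG]$ is $0$ in $\Omega^{\spin}_{n-8k-1}(BG)$, and $\alpha_{{\rm cyl}}(M)=0$. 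Therefore $M_\Sigma$ admits a well-adapted, and in particular adapted, metric of {\psc}.

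The only non-formal ingredients are the previously established Theorem~\ref{thm:trivialbundle} and Lemma~\ref{lem:HP2} (the latter itself coming from the ring structure of $\Omega^{\spin}_*$ — signature, Euler characteristic, and the description of $\ker(\Omega^{\spin}_*\to\Omega_*)$). Granting these, the single point requiring care — and the only one — is the identification of the bordism class of the null-homotopic map $\beta M\to BG$ with the absolute class $[\beta M]\in\Omega^{\spin}_{n-8k-1}$, which is exactly what lets the non-zero-divisor property of $[\bH\bP^{2k}]$ be applied; the remainder is bookkeeping.
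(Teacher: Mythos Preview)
Your argument is correct and follows essentially the same route as the paper: reduce to Theorem~\ref{thm:trivialbundle} by showing $[\beta M\to BG]=0$, which boils down to $[\beta M]=0$ in $\Omega^{\spin}_{n-8k-1}$ because $[\beta M]\cdot[\bH\bP^{2k}]=[\p M]=0$ and $[\bH\bP^{2k}]$ is a non-zero-divisor (Lemma~\ref{lem:HP2}). The paper phrases this as ``the transfer map is multiplication by $[L]$, hence injective,'' which is exactly your computation; you are simply more explicit about the split inclusion $\Omega^{\spin}_*(\pt)\hookrightarrow\Omega^{\spin}_*(BG)$ and you spell out the ``only if'' direction (which the paper leaves to Theorem~\ref{thm:psobstruction}) and the pleasant side observation that $\alpha(\beta M)=0$ is automatic here.
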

\begin{proof}
In  the case where the $L$-bundle over $\beta M$ is trivial, the
$(L,G)$-transfer map $\Omega_{n-\ell-1}^\spin \to \Omega_{n-1}^\spin$ is
just multiplication by the class of $L$ in $\Omega_\ell^\spin$.
When $L=\bH\bP^{2k}$, by Lemma
\ref{thm:BaasHP2}, this map is injective, and thus $[\beta M]=0$ in
$\Omega_{n-\ell-1}^\spin$. So we can apply Theorem
\ref{thm:trivialbundle}.
\end{proof}  
\begin{remark}
Note that Lemma \ref{lem:HP2} fails for odd quaternionic projective spaces,
since these annihilate torsion classes in the kernel of
the forgetful map $\Omega_*^\spin\to \Omega_*$.  Nevertheless, we expect
that the case of Baas-Sullivan singularities with $L=\bH\bP^{2k+1}$
is treatable, but will require a more complicated argument.  We leave this
to future work.
\end{remark}  
\section{Preview of the non-simply connected case}
\label{sec:preview}

We conclude by mentioning some ``unfinished business'' that
will be treated in a continuation of this paper \cite{BB-PP-JR}.
We start by extending the obstruction theory to the cases
where $M_\Sigma$ and/or $\beta M$ are not simply connected.
As in the theory of psc on general closed manifolds
(see \cite{MR2408269} for a survey), this involves
obstructions in the $KO$-theory of the group $C^*$-algebras
of the relevant fundamental groups.  Then we generalize the
Surgery Theorem and Bordism Theorem (Theorems \ref{thm:surg}
and \ref{thm:4-1}) to this situation.  As a result we are
able to generalize the Existence Theorem (Theorem \ref{thm:pscsufficiency})
to the case where the relevant groups satisfy the
Gromov-Lawson-Rosenberg conjecture.  If the groups
are in the class where the Baum-Connes assembly map
is injective, then
by a theorem of Stolz \cite[\S3]{MR1937026}, we can at least prove a
``stable'' existence theorem in the sense of \cite{MR1321004}.

Other problems to be discussed in \cite{BB-PP-JR} involve
the topology of the space of well-adapted psc-metrics if this
space is non-empty.  In some cases where $M_\Sigma$ is not
simply connected, rho-invariants on manifolds with $L$-fibered singularities
can be used to show
that this space has infinitely many components.
We will also see that the topology of the space of 
well-adapted psc-metrics on $M_\Sigma$ is at least as complicated
as that of the space of psc-metrics on $\beta M$.

\bibliographystyle{amsplain}

\bibliography{PSCFiberedSingGen}

\end{document}